\documentclass[12pt,a4paper]{article}

\usepackage{amssymb}
\usepackage{amsmath, amsthm}
\usepackage{arydshln}
\usepackage{epsfig}
\usepackage{setspace}
\usepackage{comment}
\usepackage{geometry}
\usepackage{hyperref}
\usepackage{mathtools}
\usepackage{mleftright}\mleftright

\def\RR{\mathbb{R}}

\def\ZZ{\mathbb{Z}}
\def\QQ{\mathbb{Q}}
\def\FF{\mathbb{F}}
\def\KK{\mathbb{K}}
\def\11{\mathbf{1}}

\numberwithin{equation}{section}

\newcommand{\rank}{\mathop{\rm rank} }

\newcommand{\supp}{\mathop{\rm supp} }

\newcommand{\Conv}{\mathop{\rm Conv} }

\DeclareMathOperator{\Det}{Det}
\DeclareMathOperator{\rk}{rk}
\DeclareMathOperator{\ncrk}{ncrk}
\newcommand{\ncrank}{\mathop{\rm nc\mbox{-}rank} }

\newcommand{\proj}{{\rm proj}}

\newcommand{\ncdet}{\mathop{\rm Det} }

\newtheorem{Thm}{Theorem}[section]

\newtheorem{Lem}[Thm]{Lemma}

\theoremstyle{definition}
\newtheorem*{Clm}{Claim}

\newtheorem{Rem}[Thm]{Remark}

\title{On integral polytopes related to Edmonds' problem}
\author{Hiroshi Hirai\footnote{
Graduate School of Mathematics,
Nagoya University, Nagoya, 464-8602, Japan.
\texttt{\footnotesize hirai.hiroshi@math.nagoya-u.ac.jp}
}}

\begin{document}

\maketitle

\begin{abstract}
    In this paper, we study polyhedral aspects on commutative and noncommutative Edmonds' problems for computing the rank of linear symbolic matrix $A = \sum_{k=1}^m A_k x_k$. We regard them as linear optimization over integral polytopes ${\cal P}(A)$ and ${\cal Q}(A)$, respectively, which are obtained by the convex hulls of exponent vectors of subdeterminants of~$A$ and 
    its blow-ups $A^{\{d\}} = \sum_{k=1}^m A_k \otimes X_k$ $(d=1,2,\ldots)$.
    By extending previously known results on nc-rank, we establish a hierarchy of integral 
    polytopes ${\cal P}(A) \subseteq {\cal P}^{\leq 2}(A) \subseteq {\cal P}^{\leq 3}(A) \subseteq \cdots = {\cal Q}(A)$ and show that the integrality gap of ${\cal Q}(A)$ relative to ${\cal P}(A)$ is at least $1/2$.
    Further, 
    we show that if each $A_k$ is rank-2 skew-symmetric, then the above hierarchy terminates at the second level and the integrality gap is improved to $2/3$.
\end{abstract}

\noindent
Keywords: Edmonds' problem, noncommutaive rank, Dieudonn\'e determinant, free skew field, generic division algebra, fractional linear matroid matching\\

\noindent
MSC-classifications: 52B12, 12E15

\section{Introduction}

{\em Edmonds' problem}~\cite{Edmonds67}
asks to compute the rank of a linear symbolic matrix
\begin{equation}\label{eqn:A}
	A = \sum_{k=1}^m A_k x_k,
\end{equation}
where  $A_k$ are $n \times n$ matrices over a field $\KK$, 
$x_k$ are indeterminates, and the rank $\rk A$ is considered in the rational function field $\KK(x_1,x_2,\ldots,x_m)$.
 This problem plays an important role in combinatorics and computer science; see e.g., \cite{Kabanets2004,Lovasz89,MurotaMatrix}.
Edmonds' problem 
is not known to be polynomial-time solvable.
Recently, a noncommutative version of Edmonds' problem, in which $x_ix_j \neq x_j x_i$,
was introduced by~\cite{IQS15a}, and the corresponding rank, 
{\em noncommutative rank (nc-rank)}, was shown to be computable in polynomial time~\cite{GGOW15,HamadaHirai,IQS15b}.
Since then, there have been a series of interesting developments 
in the literature;
see e.g., \cite{ArvindChatterjeeMukhopadhyay2024FOCS,BFGOWW} and reference therein.

In this paper, we address ``polyhedral" aspects on commutative and noncommutative Edmonds' problems. This is inspired by polyhedral methods in combinatorial optimization~\cite{SchrijverBook}. 
Let us formulate Edmonds' problem as 
linear optimization on a polytope:
For a multivariate polynomial $p(x_1,x_2,\ldots,x_m)= \sum_{u \in \ZZ_{\geq 0}^m} a_{u}x_1^{u_1}x_2^{u_2} \cdots x^{u_m}_m$,
let $\supp p \subseteq \ZZ_{\geq 0}^m$ denote the set of
all exponent vectors $u$ with  $a_{u} \neq 0$.
For a matrix~$A$ in~(\ref{eqn:A}),
define polytope ${\cal P}(A)$ as the convex hull of all exponent vectors of all subdeterminants of $A$:
\begin{equation*}
{\cal P}(A) := \Conv \bigcup_{I,J \subseteq [n]: |I|=|J|} \supp \det A[I,J],  
\end{equation*}
where $A[I,J]$ denotes the submatrix of $A$ specified by row index set $I$ and column index set $J$, and $\det A[\emptyset, \emptyset] :=1$.
It is clear that $\rk A$ is obtained by linear optimization on ${\cal P}(A)$ with the all-one objective vector ${\bf 1}$:
\begin{equation*}
\max \{ {\bf 1}^{\top}u \mid u \in {\cal P}(A) \} = \rk A.
\end{equation*}
To obtain a better understanding of ${\cal P}(A)$, 
we consider weighted maximization on it, which corresponds to
computing the maximum degree of subdeterminants.
For an integer weight $c \in \ZZ^m$, define $A[c]$ by
\begin{equation}\label{eqn:A[c]}
A[c] := \sum_{k=1}^m A_k x_kt^{c_k},
\end{equation}
where $t$ is a new indeterminate. 
Then it holds
\begin{equation}\label{eqn:weighted_max}
\max \{ c^{\top}u \mid u \in {\cal P}(A) \} = 
\max_{I,J \subseteq [n]:|I|=|J|} \deg \det (A[c])[I,J],
\end{equation}
where $\deg$ means the maximum degree in $t$.
In this regard, computing the maximum degree of subdeterminants 
can be viewed as a weighted extension of Edmonds' problem.

A series of papers~\cite{HH_degdet,HiraiIkeda,HIOS2025}
has developed a noncommutative version of weighted Edmonds' problem, 
and established an analogue of 
the relation (\ref{eqn:weighted_max}) in this setting, where another polytope ${\cal Q}(A)$ plays the role of ${\cal P}(A)$.
This polytope ${\cal Q}(A)$ is a relaxation of ${\cal P}(A)$:
\begin{equation*}
{\cal P}(A) \subseteq {\cal Q}(A).
\end{equation*}
From the viewpoint of combinatorial optimization,
it is a ``tractable" relaxation:
Linear optimization on ${\cal Q}(A)$ admits a nice duality (good characterization) and is polynomial-time solvable.
The paper \cite{HIOS2025} examined this viewpoint for several combinatorial optimization problems
that can be formulated as instances of Edmonds' problem. 

The present paper continues to study the two polytopes ${\cal P}(A), {\cal Q}(A)$, and shows four results extending previously known for nc-rank.
Here ${\cal Q}(A)$ is defined as the union of the projections 
of ${\cal P}(A^{\{d\}})$ for the {\em $d$-blow-up} $A^{\{d\}} = \sum_{k=1}^m A_k \otimes X_k$ of $A$ $(d=1,2,\ldots)$, where $X_k$ is 
a $d \times d$ matrix of indeterminate entries.
We show that the polytopes obtained as partial union are also integral polytopes (Theorem~\ref{thm:integrality}). This can be viewed as an extension of the {\em regularity lemma}~\cite{IQS15a}, and yields 
a hierarchy of integral polytopes 
${\cal P}(A) \subseteq {\cal P}^{\leq 2}(A) \subseteq {\cal P}^{\leq 3}(A) \subseteq \cdots = {\cal Q}(A)$. We show that the integrality gap of ${\cal Q}(A)$ relative to ${\cal P}(A)$ is at least $1/2$ (Theorem~\ref{thm:2-approx_poly}).
This is an extension of the result~\cite{FortinReutenauer04} that nc-rank is $1/2$-approximation of rank.
We then focus on the case where each $A_k$ is rank-2 skew-symmetric. 
In this case, ${\cal P}(A)$ and ${\cal Q}(A)$ 
correspond to linear matroid matching and 
its fractional relaxation, respectively; see \cite{HIOS2025,Lovasz89,OkiSoma_SICOMP}.  
We show that the above hierarchy terminates at the second level (Theorem~\ref{thm:2nd_blowup}).
This is an extension of~\cite{OkiSoma_SICOMP} that nc-rank ($=$ the maximum size of fractional matroid matchings) is obtained by the second blow-up.
We finally show that the integrality gap of ${\cal Q}(A)$ is improved to $2/3$ (Theorem~\ref{thm:3/2-approx}).
This is an extension of the well-known fact
that a natural LP relaxation of matchings in undirected graphs has integrality gap $2/3$ but may be unexpected from the fact~\cite{KaparisLetchfordMourtos2020,LeeSviridenkoVondrak2013}
that fractional matroid matching relaxation 
has unbounded integrality gap in general.

In Section~\ref{sec:pre}, 
we provide necessary background
on integral polytopes, noncommutaive rank, 
and a noncommutative version of the degree of determinants.
In Section~\ref{sec:results}, we present the results and proofs.

\section{Preliminaries}\label{sec:pre}
%
\subsection{Integral polytopes}
We will utilize some elementary properties of (integral) 
polytopes.
For a polytope (bounded polyhedron) ${\cal P} \subseteq \RR^m$, 
the support function $f_{\cal P}: \RR^m \to \RR$
is defined by $f_{\cal P}(c) := \max \{c^{\top}u \mid u \in {\cal P}\}$ for $c \in \RR^m$.
A polytope ${\cal P}$ is said to be {\em rational}/{\em integral} if every extreme point is a rational/integer vector.
\begin{Lem}[{Edmonds-Giles; see \cite[Corollary 22.1a]{SchrijverIP}}]\label{lem:EdmondsGiles}
A rational polytope ${\cal P}$ is integral if and only 
if $f_{\cal P}(c)$ is an integer for every integer vector $c \in \ZZ^m$.
\end{Lem}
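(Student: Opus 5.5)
The forward direction is immediate. If ${\cal P}$ is integral, then for every $c \in \ZZ^m$ the maximum of the linear function $c^{\top}u$ over the polytope ${\cal P}$ is attained at an extreme point $v$. Since $v \in \ZZ^m$, the value $f_{\cal P}(c) = c^{\top}v$ is an integer.

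For the converse, assume $f_{\cal P}(c) \in \ZZ$ for all $c \in \ZZ^m$, and fix an extreme point $v$ of ${\cal P}$. Because ${\cal P}$ is rational, $v \in \QQ^m$. The plan is to choose an integer objective vector $c$ for which $v$ is the \emph{unique} maximizer, and then perturb $c$ along coordinate directions. Since $v$ is a vertex of a polytope, there is some $c_0 \in \RR^m$ exposing $v$ uniquely. The normal cone of $v$ has nonempty interior, and rational points are dense, so we may take $c_0 \in \QQ^m$. Scaling by a large positive integer gives $c \in \ZZ^m$ with $v$ as the unique maximizer.

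Next we use that the number of vertices is finite. There is a margin $\delta > 0$ such that $c^{\top}v \ge c^{\top}w + \delta$ for every other vertex $w$. Let $N$ be a positive integer with $N\delta$ larger than $\max_{w}|u^{\top}(w - v)|$ over all vertices $w$ and all $u \in \{e_1,\ldots,e_m\}$. Then for each unit vector $e_i$, the vertex $v$ is still the unique maximizer of $(Nc + e_i)^{\top}u$. Hence
\[ f_{\cal P}(Nc+e_i) - f_{\cal P}(Nc) = (Nc + e_i)^{\top}v - Nc^{\top}v = v_i . \]
The left-hand side is a difference of two integers by hypothesis, so $v_i \in \ZZ$ for every $i$. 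Since $v$ was an arbitrary extreme point, every extreme point is integral, and ${\cal P}$ is integral.

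The only step needing care is producing an \emph{integer} vector $c$ that uniquely exposes $v$. This relies on the normal cone at a vertex being full-dimensional, which is where rationality is used only mildly; this is the step to check most carefully. Once $c$ is fixed, the perturbation argument is routine.
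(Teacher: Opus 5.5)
Your proof is correct, but it follows a different route from the one the paper relies on. The paper gives no argument of its own. It cites Schrijver's Corollary 22.1a, which is stated for general rational polyhedra and deduced from his Theorem 22.1: a rational polyhedron is integral iff every rational supporting hyperplane contains an integer point.

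Schrijver's route works as follows.
\begin{itemize}
\item Theorem 22.1 rests on the integer Farkas lemma (Hermite normal form theory). If a minimal face $\{x \mid A'x=b'\}$ had no integer point, one finds $y>0$ with $yA'$ integral and $yb'\notin\ZZ$. This produces a supporting hyperplane containing no integer point.
\item The corollary then follows by scaling a rational supporting hyperplane $\{x \mid c^{\top}x=\delta\}$ so that $c$ is integral with coprime entries. Then $\delta=f_{\cal P}(c)\in\ZZ$, so $c^{\top}x=\delta$ has an integral solution.
\end{itemize}

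Your argument instead picks an integral $c$ that exposes $v$ uniquely and reads off $v_i=f_{\cal P}(Nc+e_i)-f_{\cal P}(Nc)$. This bypasses all lattice theory and is the more elementary proof in the bounded case.

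What Schrijver's route buys is the supporting-hyperplane characterization itself, plus validity for unbounded and non-pointed polyhedra. There, minimal faces need not be vertices, and rationality is genuinely needed. For example, the line $x_2=\sqrt{2}\,x_1+1/2$ satisfies the integer-value condition, since the maximum is finite only for $c=0$, yet it contains no integer point.

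One correction to your closing remark: rationality plays no role in producing $c$. The set of vectors exposing $v$ uniquely is a nonempty open set for any polytope, so it contains rational points regardless. In fact your argument never uses $v\in\QQ^m$ at all, so it proves the equivalence for arbitrary nonempty polytopes.
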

We will study inclusion of two polytopes by means 
of their support functions.
It is basic in convex analysis that
polytope ${\cal P}$ is recovered from 
$f_{\cal P}$ by ${\cal P} = \{u \in \RR^m \mid c^{\top}u \leq f_{\cal P}(c) \ (c \in \RR^m)\}$, and that $f_{\cal P}$ is continuous convex 
and positively homogeneous 
$\alpha f_{\cal P}(c) = f_{\cal P}(\alpha c) =  f_{\alpha{\cal P}}(c)$ for $\alpha \in \RR_{\geq 0}$, where $\alpha {\cal P} := \{ \alpha u \mid u \in {\cal P}\}$.
Thus we have:
\begin{Lem}\label{lem:P_subseteq_Q}
For polytopes ${\cal P}, {\cal Q} \subseteq \RR^m$ and 
$\alpha \in \RR_{\geq 0}$, it holds
${\cal Q} \subseteq \alpha {\cal P}$ if and only if $f_{\cal Q}(c) \leq \alpha f_{\cal P}(c)$ for every integer vector $c \in \ZZ^m$.
\end{Lem}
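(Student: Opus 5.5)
The plan is to prove both directions directly from the definitions, using the two facts recalled just before the statement: the support function recovers the polytope, and it is continuous and positively homogeneous.

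For the forward direction, suppose ${\cal Q} \subseteq \alpha{\cal P}$. For any $c \in \RR^m$ (in particular any integer $c$), the maximum of $c^{\top}u$ over ${\cal Q}$ is at most its maximum over the larger set $\alpha{\cal P}$. Hence $f_{\cal Q}(c) \leq f_{\alpha{\cal P}}(c) = \alpha f_{\cal P}(c)$, where the last equality is the positive homogeneity $f_{\alpha{\cal P}}(c) = \alpha f_{\cal P}(c)$. For $\alpha=0$ we have $\alpha{\cal P}=\{0\}$ and $f_{\{0\}}\equiv 0$, so this is consistent.

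For the converse, assume $f_{\cal Q}(c) \leq \alpha f_{\cal P}(c)$ for all $c \in \ZZ^m$. The first step extends this inequality to all $c \in \RR^m$.
\begin{itemize}
\item For a rational $c$, choose a positive integer $N$ with $Nc \in \ZZ^m$. Positive homogeneity gives $N f_{\cal Q}(c) = f_{\cal Q}(Nc) \leq \alpha f_{\cal P}(Nc) = N\alpha f_{\cal P}(c)$. Dividing by $N$ gives the inequality at $c$.
\item For a real $c$, approximate by rational vectors. Both $f_{\cal Q}$ and $\alpha f_{\cal P}$ are continuous, since they are finite convex functions on $\RR^m$, or more concretely maxima of finitely many linear functions over the vertices. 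So the inequality passes to the limit.
\end{itemize}
This gives $f_{\cal Q}(c) \leq f_{\alpha{\cal P}}(c)$ for all real $c$.

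The final step recovers the inclusion. Take $u \in {\cal Q}$. For every $c \in \RR^m$ we have $c^{\top}u \leq f_{\cal Q}(c) \leq f_{\alpha{\cal P}}(c)$. Since $\alpha{\cal P}$ is a polytope, it satisfies $\alpha{\cal P} = \{u \mid c^{\top}u \leq f_{\alpha{\cal P}}(c)\ (c\in\RR^m)\}$. This representation holds because a closed convex set is the intersection of its supporting halfspaces (separation theorem), and it remains valid for $\alpha=0$. Therefore $u \in \alpha{\cal P}$.

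There is no real obstacle here. The only point needing care is the density/continuity passage from integer to real objective vectors, together with the degenerate case $\alpha=0$, and both are handled above.
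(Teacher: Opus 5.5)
Your proposal is correct and follows essentially the same route as the paper. The paper derives the lemma in one line from the same three facts (recovery of a polytope from its support function, continuity, and positive homogeneity $f_{\alpha{\cal P}}(c)=\alpha f_{\cal P}(c)$); you spell out the details, namely homogeneity to pass from integer to rational $c$ and continuity to pass to real $c$.
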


\subsection{Noncommutative rank}
Here we recall nc-rank and its properties which we address. 
Regard the matrix $A$ in (\ref{eqn:A}) 
as a matrix over the noncommutative polynomial ring 
$\KK\langle x_1,\ldots,x_m \rangle$
over $\KK$.
This ring 
admits the most generic skew field $\KK(\langle x_1,\ldots,x_m \rangle)$ of fraction, 
called the {\em free skew field}; see~\cite{Cohn}.
{\em Noncommutative Edmonds' problem} by Ivanyos, Qiao, and Subrahmanyam~\cite{IQS15a} is the problem of computing the rank of $A$ considered in the free skew field, 
where the corresponding rank of $A$ is called the {\em noncommutative rank (nc-rank)} and is denoted by $\ncrk A$.
It is known~\cite{Cohn} that $\ncrk A$ equals the minimum $r$ with
$A = BC^{\top}$ for $B,C \in \KK \langle x_1,\ldots,x_m \rangle^{n \times r}$.
This is sharpened by Fortin and Reutenauer~\cite{FortinReutenauer04} as follows.
\begin{Thm}[\cite{FortinReutenauer04}]\label{thm:FortinReutenauer}
\begin{eqnarray}
\ncrk A & = & \min \{2n-|I|-|J|  \mid \nonumber \\ 
&& \quad \  (SAT)[I,J] = O,\nonumber \\
&& \quad \ S,T \in GL_n(\KK),\ I,J \subseteq [n]\}. \label{eqn:formula1}
\end{eqnarray}
\end{Thm}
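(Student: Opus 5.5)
We prove the two inequalities separately, using Cohn's characterization quoted above: $\ncrk A$ is the minimum $r$ such that $A = BC^{\top}$ with $B,C \in \KK\langle x_1,\ldots,x_m\rangle^{n\times r}$ (the inner rank).

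\emph{The inequality ``$\leq$''.} This direction is direct. Suppose $S,T\in GL_n(\KK)$ and $I,J\subseteq[n]$ satisfy $(SAT)[I,J]=O$. Since $S$ and $T$ are invertible over $\KK$, we have $\ncrk A=\ncrk SAT$, so we may assume $A[I,J]=O$. Permute rows and columns so that the zero block is in the upper-left corner. Then
\[
A = \begin{pmatrix} O & A_{12} \\ A_{21} & A_{22}\end{pmatrix}.
\]
Here $A_{12}$ has $|I|$ rows and $n-|J|$ columns, and $A_{21}$ has $n-|I|$ rows. We factor
\[
A = \begin{pmatrix} O & A_{12}\\ I_{n-|I|} & A_{22}\end{pmatrix}\begin{pmatrix} A_{21} & O\\ O & I_{n-|J|}\end{pmatrix}.
\]
The inner dimension is $(n-|I|)+(n-|J|)$. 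So the inner rank is at most $2n-|I|-|J|$, and hence $\ncrk A$ is at most the minimum in (\ref{eqn:formula1}).

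\emph{The inequality ``$\geq$''.} This is the main obstacle. We must show that a factorization $A=BC^{\top}$ of inner dimension $r=\ncrk A$ over the free algebra can be replaced by a $\KK$-linear ``shrunk subspace'' certificate.

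First, reduce to the case where $B$ and $C$ have linear entries. The plan is to use the linearization of Cohn: since $A$ is linear, a factorization over the free algebra can be converted into one of the form $A = B_0C_0 + \cdots$ with degree considerations. Concretely, take a minimal factorization and apply Cohn's theorem that a full linear matrix (in the inner-rank sense) is full after linearization. Equivalently, use the characterization that $A$ is not full iff $A$ is \emph{hollow} after $\KK$-transformations. This means: if $\ncrk A<n$, then there exist $S,T\in GL_n(\KK)$ with a zero block of size $p\times q$, $p+q>n$, in $SAT$.

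Granting this hollowness statement for non-full square linear matrices, we induct to get the general formula. Given $\ncrk A=r$, pass to $r$-by-$r$ full minors in the free-field sense. Alternatively, apply the hollowness statement to the augmented square matrix
\[
A' = \begin{pmatrix} A & U\\ V & O\end{pmatrix},
\]
where $U$ and $V$ are generic $\KK$-matrices with fresh variables, chosen so that $A'$ has size $2n-r$ and is still non-full exactly when needed.

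A cleaner route is to prove the formula first for the full/non-full dichotomy and then extend by adding $n-r$ free variables. Set $A'' = A + \sum_{i} E_i y_i$ with rank-one generic terms. Then $\ncrk A'' = \min(n,\ncrk A + (n-r))$, and zero blocks of $A''$ correspond to zero blocks of $A$ shrunk by at most $n-r$.

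The key step we expect to be hardest is the hollowness lemma itself. To prove it, take a factorization $A=BC$ with $B$ of size $n\times(n-1)$. Use the degree function on the free algebra, and perform elementary column/row operations by the inertia theorem (free algebras are firs, so factorizations can be trivialized) to push $B$ and $C$ to linear-plus-constant form. Comparing the degree-$0$ and degree-$1$ parts then yields a constant invertible change of basis producing the zero block.
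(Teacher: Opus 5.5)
\textbf{Gap in the ``$\geq$'' direction.} Your ``$\leq$'' direction is correct: the displayed factorization has inner dimension $2n-|I|-|J|$ and multiplies out to $A$. The ``$\geq$'' direction, however, is not proved, and the route you sketch has a real gap.

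You grant a hollowness lemma, which is already the non-full case of the statement, and your passage from it to general $r$ does not work as written. First, $A'$ is never specified. Second, in the $A''$ variant, adding $n-r$ generic rank-one terms to a matrix of nc-rank $r$ gives, by your own formula, a \emph{full} matrix, and the hollowness lemma says nothing about full matrices. Third, even with $n-r-1$ added terms, a zero block of $SA''T$ with $p+q\geq n+1$ is a zero block of $SAT$, but it only certifies a value $\leq n-1$. Enlarging it to size $\geq 2n-r$ is exactly your claim that zero blocks are ``shrunk by at most $n-r$''. That claim is unproven, and it would need a genericity argument uniform over all candidate zero blocks, which themselves depend on the added terms. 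Finally, your sketch of the hollowness lemma uses the inertia theorem to make $B$ and $C$ linear-plus-constant, and that step is not justified.

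\textbf{The missing argument.} None of this machinery is needed. The idea in your last sentence works directly for every $r$ and for arbitrary polynomial factors.

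Let $A=BC$ with $B\in\KK\langle x_1,\ldots,x_m\rangle^{n\times r}$, $C\in\KK\langle x_1,\ldots,x_m\rangle^{r\times n}$, and $r=\ncrk A$. Split $B=B_0+B_1+B_{\geq 2}$ and $C=C_0+C_1+C_{\geq 2}$ into homogeneous components. The free algebra is graded and $A$ is homogeneous of degree $1$. Comparing the components of degree $0$ and $1$ therefore gives $B_0C_0=O$ and $A=B_0C_1+B_1C_0$, whatever the higher components are.

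Put $s=\rk B_0$ and $t=\rk C_0$. From $B_0C_0=O$ we get $s+t\leq r$. Choose $S,T\in GL_n(\KK)$ so that the rows of $SB_0$ indexed by some $I$ with $|I|=n-s$ vanish, and the columns of $C_0T$ indexed by some $J$ with $|J|=n-t$ vanish. Then
\[
(SAT)[I,J]=(SB_0)[I,[r]]\,(C_1T)[[r],J]+(SB_1)[I,[r]]\,(C_0T)[[r],J]=O,
\]
and $2n-|I|-|J|=s+t\leq r$, which is the missing inequality.

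(The paper itself gives no proof of this statement; it only cites Fortin--Reutenauer.)
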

Then, one can see from (\ref{eqn:formula1}) (or (\ref{eqn:formula2}) below) that
\begin{equation*}
\rk A \leq \ncrk A.
\end{equation*}
Further application of (\ref{eqn:formula1}) shows that 
nc-rank is 1/2-approximation of rank:
\begin{Thm}[\cite{FortinReutenauer04}]\label{thm:2-approx_nc}
\[
\frac{1}{2} \ncrk A \leq  \rk A.
\]
\end{Thm}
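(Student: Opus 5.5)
We prove $\rk A \ge \tfrac12 \ncrk A$ by taking a maximal linearly independent set of columns of $A$ (over $\KK(x_1,\ldots,x_m)$), turning it into a zero block of a transformed $A$, and then reading off an upper bound on $\ncrk A$ from the formula (\ref{eqn:formula1}) of Theorem~\ref{thm:FortinReutenauer}.

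Let $r := \rk A$. Choose $I_0, J_0 \subseteq [n]$ with $|I_0| = |J_0| = r$ and $\det A[I_0,J_0] \neq 0$. Since $\KK$ may be finite, specializing the $x_k$ to field elements is unsafe, so we work symbolically. The columns indexed by $J_0$ are linearly independent over $\KK(x)$ and span the column space of $A$. Hence the submatrix $A[[n]\setminus I_0, [n]\setminus J_0]$ has a Schur-complement structure:
\[
A' := A[\bar I_0,\bar J_0] - A[\bar I_0,J_0]\,A[I_0,J_0]^{-1}A[I_0,\bar J_0] = O,
\]
where bars denote complements. This equation is rational, however, and does not by itself produce constant matrices $S,T$, so we need a more robust argument.

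Instead, we aim directly at the formula (\ref{eqn:formula1}). Assume without loss of generality that $I_0 = J_0 = [r]$. The matrix $A$ has rank exactly $r$ with $A[[r],[r]]$ nonsingular. Since $A[I_0,J_0]$ is itself a full-rank $r\times r$ block, its presence alone gives nothing beyond the trivial bound, so consider instead the linear matrix over $\KK$-span. For each $k$, the rank condition says that the linear space $\mathcal{A} = \mathrm{span}\{A_k\}$ has maximum rank $r$.

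The key step is the following lemma: if a linear matrix space $\mathcal{A} = \mathrm{span}\{A_k\}$ has generic rank $r$, then there exist $S,T \in GL_n(\KK)$ and $I,J \subseteq [n]$ with $|I|+|J| \ge 2n - 2r$ and $(SAT)[I,J] = O$. Substituting into (\ref{eqn:formula1}) gives $\ncrk A \le 2n - |I| - |J| \le 2r$, which is the claim.

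To prove the lemma, we use the decomposition of $A$ into the first $r$ rows and columns. Write $V := \KK(x)^n$, let $U := \mathrm{col}(A)$, and work over $\KK$. Let $R := \sum_k \mathrm{im}\,A_k$, and let $C$ be the span of the column spaces of the blocks $A[\,\cdot\,,J_0]$. The idea is to take $W :=$ the $\KK$-span of the rows in $I_0$ together with the columns in $J_0$, and to use the generic rank identity that every $A_k$ maps $\ker$ into $\mathrm{span}$. Concretely, the claim to verify is
\[
A_k[\bar I_0,\bar J_0] = O \quad \text{for all } k
\]
after a suitable constant change of basis. This is the standard argument: every $2\times2$ minor-type expansion of an $(r+1)$-minor that contains a single entry outside $[r]\times[r]$ and is linear in $x$ must vanish. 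Taking the coefficient of the highest-degree terms in an $(r+1)\times(r+1)$ minor that uses rows $[r]\cup\{i\}$ and columns $[r]\cup\{j\}$ yields the required relations.

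The main obstacle is making this degree argument yield constant $S,T$. I would first try the cleaner route of choosing bases. Let $U_0 := \mathrm{span}\{$columns of all $A_k\}$ and $W_0 := \mathrm{span}\{$rows of all $A_k\}$. Set $L := \bigcap_k \ker(\cdot)$ relative to a maximum-rank specialization over an extension field $\KK'$. Then invoke the fact that (\ref{eqn:formula1}) is invariant under field extension, so that both rank and nc-rank are unchanged. After extending to an infinite field, choose $x = a$ with $\rk A(a) = r$, and choose $S,T$ so that $A(a) = \begin{pmatrix} I_r & O \\ O & O\end{pmatrix}$. Maximality of the rank along the lines $A(a) + \epsilon A_k$ then forces the lower-right $(n-r)\times(n-r)$ block of each $SA_kT$ to vanish: the $(r+1)$-minors give
\[
\det\begin{pmatrix} I_r + \epsilon X & \epsilon y \\ \epsilon z & \epsilon w\end{pmatrix} = \epsilon w + O(\epsilon^2) = 0 .
\]
This gives $I = J = [n]\setminus[r]$ with $|I|+|J| = 2n-2r$, completing the proof. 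The extension-invariance of the right-hand side of (\ref{eqn:formula1}) is inherited from nc-rank being field-independent, and it is the step needing care.
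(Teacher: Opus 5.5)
Your final argument is correct and matches the paper's approach. You extend $\KK$ to an infinite field, normalize $SA(a)T=\mathrm{diag}(I_r,O)$ at a point $a$ of maximum rank, and read off from the $\epsilon^1$-coefficient of the vanishing $(r+1)$-minors along $A(a)+\epsilon A_k$ that the lower-right $(n-r)\times(n-r)$ block of every $SA_kT$ is zero. Then (\ref{eqn:formula1}) gives $\ncrk A\le 2r$. This is the Bl\"aser--Jindal--Pandey argument that the paper cites for this theorem and adapts in weighted form in Lemma~\ref{lem:SM} and the proof of Theorem~\ref{thm:2-approx}.

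The extension step you flag is handled as the paper does it, via Theorem~\ref{thm:blow-up}: each $\rk A^{\{d\}}$ is unchanged by enlarging $\KK$, and this gives the nontrivial direction you need, $\ncrk_{\KK}A\le\ncrk_{\KK'}A$. Your earlier Schur-complement and symbolic-minor paragraphs should be deleted.
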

The free skew field is realized as the skew field of {\em noncommutative rational functions}~\cite{Kaliuzhnyi-VerbovetskyiVinnikov2012}, which
are formal {\em rational expressions}, 
such as $(x_1x_2^{-1}x_1 + a x_3)^{-1} + x_2$, modulo the equivalence under substitution $x_k \leftarrow Z_k$ of all square matrices $Z_k \in \KK^{d \times d}$ of all sizes~$d$.
Motivated by matrix substitution, 
the $d$-th {\em blow-up} $A^{\{d\}}$ of $A$ is defined by
\begin{equation*}
A^{\{d\}} := \sum_{k=1}^m A_k \otimes X_k, 
\end{equation*}
where $X_k = (x_{k,ij})$ is 
a $d \times d$ matrix
of indeterminate entries $x_{k,ij}$ and 
$\otimes$ means the Kronecker product.
The rank of $A^{\{d\}}$ is considered on 
the rational function field $\KK(\{x_{k,ij}\})$ with indeterminates $x_{k,ij}$ $(k \in [m],i,j \in [d])$.
\begin{Thm}[{\cite{IQS15a}}]\label{thm:blow-up}
\begin{equation}\label{eqn:formula2}
\ncrk A = \max_{d=1,2,\ldots}\frac{1}{d} \rk A^{\{d\}}.
\end{equation}
\end{Thm}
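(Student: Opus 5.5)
The plan is to prove the two inequalities $\frac{1}{d}\rk A^{\{d\}} \leq \ncrk A$ for every $d$, and then exhibit some $d$ attaining equality.

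\emph{Upper bound.} I would use the characterization, recalled before Theorem~\ref{thm:FortinReutenauer}, that $\ncrk A$ equals the minimum $r$ with $A = BC^{\top}$ for $B,C \in \KK\langle x_1,\ldots,x_m\rangle^{n\times r}$. Under the substitution $x_k \mapsto X_k$ (generic $d\times d$ matrices), the map $\KK\langle x\rangle \to \KK[\{x_{k,ij}\}]^{d\times d}$ is a ring homomorphism. Applying it entrywise turns the factorization into $A^{\{d\}} = B^{\{d\}}(C^{\{d\}})^{\top}$ with inner dimension $rd$, so $\rk A^{\{d\}} \leq d\,\ncrk A$. A cleaner alternative is Theorem~\ref{thm:FortinReutenauer}: if $(SAT)[I,J]=O$, then $((S\otimes I_d)A^{\{d\}}(T\otimes I_d))$ has a zero block of size $d|I|\times d|J|$. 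This gives $\rk A^{\{d\}} \leq d(2n-|I|-|J|)$.

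\emph{Lower bound.} This is the main obstacle: I must find $d$ with $\rk A^{\{d\}} \geq d\,\ncrk A$. Let $r=\ncrk A$. Then $A$ has an $r\times r$ submatrix $A'$ that is invertible over the free skew field $\KK(\langle x\rangle)$. Its inverse is built by finitely many rational operations, so it is a matrix of noncommutative rational expressions. By the realization of the free skew field as noncommutative rational functions (Kaliuzhnyi-Verbovetskyi--Vinnikov), there is some size $d$ and matrices $Z_k\in\overline{\KK}^{d\times d}$ for which all these expressions, and the inverses appearing inside them, are defined. Equivalently, the identity $A'\cdot A'^{-1}=I_r$ is nondegenerate at $Z$. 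Substituting gives $A'(Z)$ invertible of size $rd$, so the generic substitution $X_k$ also has $\rk (A')^{\{d\}} = rd$. Hence $\rk A^{\{d\}}\geq rd$.

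The delicate points are the following. First, the domain of a rational expression must be nonempty at some matrix size. Amitsur's theorem guarantees this for expressions that are nondegenerate in the free skew field. Second, if $\KK$ is finite, I would pass to $\overline{\KK}$ or to a field of generic points; rank over the rational function field is unaffected. Combining the two inequalities gives the stated formula, with the maximum attained.
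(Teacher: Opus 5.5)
The paper does not prove this theorem; it is quoted from \cite{IQS15a} as background. Judged on its own, your argument is correct.

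\textbf{Upper bound.} Both of your versions work. The Fortin--Reutenauer version is the cleanest: $(S\otimes I_d)A^{\{d\}}(T\otimes I_d)=\sum_k SA_kT\otimes X_k$ visibly has a $d|I|\times d|J|$ zero block. In the factorization version, the right factor is the \emph{block} transpose of $C$, with block $(l,j)$ equal to the substitution of $C_{jl}$; it is not $(C^{\{d\}})^{\top}$. Only the inner dimension $rd$ matters, so this is cosmetic.

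\textbf{Lower bound.} Almost all of the content sits in the matrix-evaluation model of the free skew field. This is Amitsur's theorem on rational identities, in the Kaliuzhnyi-Verbovetskyi--Vinnikov form quoted in the preliminaries. Note that ``nondegenerate'' means, by definition, ``nonempty domain at some size''. What Amitsur supplies is that classes of such expressions, modulo agreement on common domains, form the free skew field. Granting this, your key step is right. If $R_{lj}$ represent the entries of $A'^{-1}$, then $\sum_l A'_{il}R_{lj}$ represents $\delta_{ij}$, so it evaluates to $\delta_{ij}I_d$ on $\bigcap_l \mathrm{dom}\,R_{lj}$. This gives $A'(Z)R(Z)=I_{rd}$.

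\textbf{What this route does not give.} It gives no control over the $d$ attaining the maximum, since $d$ depends on the unknown sizes of expressions for $A'^{-1}$. The literature gets $d\le n-1$ (Theorem~\ref{thm:d<=n-1}) from the regularity lemma. A bound of this kind, in the weighted form of Theorem~\ref{thm:degDet_B_blowup}, is what the paper needs to know that ${\cal Q}(A)$ is a rational polytope.

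\textbf{Two details to argue rather than assert.}
\begin{itemize}
\item \emph{The common point $Z$.} Each $\mathrm{dom}_d R_{lj}$ is Zariski open in $(\overline{\KK}^{d\times d})^m$. A point $Z$ of size $d_0$ in a domain yields the block-diagonal point $Z\oplus\cdots\oplus Z$ in that domain at every size $sd_0$. Hence at a common multiple $d$ of the individual sizes, all domains are nonempty open subsets of an irreducible affine space, and so they intersect.
\item \emph{Finite $\KK$.} Your justification (``rank over the rational function field is unaffected'') covers the wrong side. You need that $\ncrk A$ does not drop when $\KK$ is replaced by $\overline{\KK}$, i.e., that fullness over $\KK\langle x_1,\ldots,x_m\rangle$ survives base change. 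For linear $A$ this follows from Theorem~\ref{thm:FortinReutenauer}. The map $U\mapsto \dim U-\dim\sum_k A_kU$ is supermodular, so its maximizers in $\overline{\KK}^n$ form a lattice with a unique minimum. That minimum is Galois-stable, hence defined over $\KK$, and it has the same deficiency there.
\end{itemize}
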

Moreover, $(1/d) \rk A^{\{d\}}$ is always an integer. This property, called the {\em regularity lemma}, plays important roles in nc-rank theory.
\begin{Thm}[{Regularity Lemma~\cite{IQS15a}}]\label{thm:regularity}
$\rk A^{\{d\}} \in d \ZZ$.
\end{Thm}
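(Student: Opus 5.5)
The plan is to interpret $A^{\{d\}}$ as a matrix over a division ring $D$ and to show that every rank computed over $D$ gets multiplied by exactly $d$ when we pass to $\KK(\{x_{k,ij}\})$. Let $F := \KK(\{x_{k,ij}\})$. The Kronecker product $A_k \otimes X_k$ has $n \times n$ block structure with $d \times d$ blocks, so $A^{\{d\}}$ can be read as an $n \times n$ matrix whose entries lie in $M_d(F)$. Each entry is $\sum_k (A_k)_{ij} X_k$, an element of the $\KK$-algebra $R_d \subseteq M_d(F)$ generated by the generic matrices $X_1,\ldots,X_m$.

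The key input is Amitsur's theorem on generic matrices:
\begin{itemize}
\item $R_d$ is a domain;
\item its ring of central quotients $D_d$ is a division algebra of degree $d$ over its center;
\item $D_d$ is realized inside $M_d(F)$ by inverting nonzero central elements.
\end{itemize}
The crucial consequence is that every nonzero element of $D_d$, viewed in $M_d(F)$, is an invertible $d\times d$ matrix. This holds because a nonzero element $a$ of a division algebra has an inverse $a^{-1}\in D_d \subseteq M_d(F)$ with $aa^{-1}=I_d$. I expect this step, namely having the embedding $D_d \hookrightarrow M_d(F)$ with $D_d$ a genuine division ring, to be the main obstacle. The rest is formal once it is available. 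I would cite Amitsur's theorem (or a standard PI-theory reference such as Cohn's book, already cited) rather than reprove it. One must also make sure the base field causes no trouble; if needed, extend $\KK$ to an infinite field, which changes neither side of the rank statement.

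Given this, view $A^{\{d\}}$ as $M \in M_n(D_d)$. Gaussian elimination works over any division ring, so there are $P,Q \in GL_n(D_d)$ with
\[
PMQ = \begin{pmatrix} I_r & O \\ O & O\end{pmatrix},
\]
where $r$ is the rank of $M$ over $D_d$. Under the embedding $M_n(D_d) \subseteq M_n(M_d(F)) = M_{nd}(F)$, the matrices $P,Q$ become invertible $nd\times nd$ matrices over $F$, since their inverses also lie in $M_n(D_d)$. The right-hand side becomes $\begin{pmatrix} I_r & O \\ O & O\end{pmatrix}\otimes I_d$, up to the identification of block orderings. This has rank $rd$ over $F$, because rank over $F$ is invariant under multiplication by invertible matrices. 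Therefore $\rk A^{\{d\}} = dr \in d\ZZ$.
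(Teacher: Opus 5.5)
Your proposal is correct and takes essentially the route the paper uses: it cites this theorem, and its Lemma~\ref{lem:regularity_B} extends exactly this Derksen--Makam argument. In both, $A^{\{d\}}$ is read as an $n\times n$ matrix over the generic division algebra $UD_d\subseteq \KK(\{x_{k,ij}\})^{d\times d}$, whose nonzero elements are invertible $d\times d$ matrices. The only difference is that the paper uses a Bruhat decomposition $M=LPDU$ so it can also track degrees in $t$, whereas your rank normal form $PMQ=\mathrm{diag}(I_r,O)$ is all the rank statement needs.
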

Derksen and Makum~\cite{DerksenMakam2018} gave an elegant proof of the regularity lemma by use of the {\em generic division algebra}. 
They also showed that the regularity lemma yields a linear upper bound in (\ref{eqn:formula2}). 
\begin{Thm}[\cite{DerksenMakam2017}]\label{thm:d<=n-1}
The maximum in (\ref{eqn:formula2}) is attained by $d \leq n-1$.
\end{Thm}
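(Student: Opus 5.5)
The plan is to derive Theorem~\ref{thm:d<=n-1} from the regularity lemma (Theorem~\ref{thm:regularity}) together with a monotonicity/increment statement for the ranks of consecutive blow-ups. Write $r_d := \frac{1}{d}\rk A^{\{d\}}$, which is an integer by Theorem~\ref{thm:regularity}. By Theorem~\ref{thm:blow-up}, $r_d \leq \ncrk A \leq n$ for all $d$, and equality holds for some $d$. The goal is to show that $r_{d}$ strictly increases as long as it is below $\ncrk A$. Then, since $r_1 = \rk A \geq 1$ (the case $A=O$ being trivial), we reach $r_d = \ncrk A$ after at most $\ncrk A - 1 \leq n-1$ steps, i.e., for some $d \leq n-1$.

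The key step is the following increment lemma: if $\rk A^{\{d\}} < d \cdot \ncrk A$, then $\rk A^{\{d+1\}} > \rk A^{\{d\}} + r_d$. Combined with regularity, this yields the desired strict increase. Indeed, $\rk A^{\{d+1\}}$ then exceeds $r_d(d+1)$ and lies in $(d+1)\ZZ$, so $\rk A^{\{d+1\}} \geq (r_d+1)(d+1)$, that is, $r_{d+1} \geq r_d + 1$.

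To prove the lemma, I would specialize the $(d+1)\times(d+1)$ generic matrices to block upper-triangular form
\[
X_k \leftarrow \begin{pmatrix} Y_k & z_k \\ 0 & w_k \end{pmatrix}, \qquad Y_k \in \KK(\cdot)^{d\times d},\ z_k \in \KK(\cdot)^{d},\ w_k \in \KK(\cdot).
\]
Specialization can only lower rank. After permuting rows and columns, $A^{\{d+1\}}$ becomes a block upper-triangular matrix
\[
\begin{pmatrix} A^{\{d\}} & Z \\ O & A \end{pmatrix},
\]
where $Z=\sum_k A_k\otimes z_k$ is a generic coupling block. The diagonal blocks already give rank at least $\rk A^{\{d\}} + \rk A \geq \rk A^{\{d\}} + r_d$; here $\rk A\ge r_d$ follows, for instance, from the $1/2$-approximation-type inequalities, or more simply from $r_d$ being the generic rank per block, and this detail will need checking. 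Strictness must come from the coupling block $Z$. Concretely, I would pass to bases in which $A^{\{d\}}$ has the form $\begin{pmatrix} I & O\\ O & O\end{pmatrix}$ and $A$ is in normal form, and then argue that if the Schur-complement-like block of $Z$ linking $\ker$ and $\mathrm{coker}$ of the two diagonal blocks vanished generically, we would obtain a shrunk subspace. By Theorem~\ref{thm:FortinReutenauer}, this would be a pair $S,T$ with $(SAT)[I,J]=O$ and $2n-|I|-|J| \leq r_d$, contradicting $r_d < \ncrk A$.

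The main obstacle is precisely this last step: extracting a Fortin--Reutenauer certificate for $A$ itself from the generic vanishing of the coupling block. The difficulty is that the kernel of $A^{\{d\}}$ lives over the rational function field in the $Y$-variables, not over $\KK$. My first attempt would be to use the fact that the vanishing is an identity in the free variables $z_k$. Each $z_k$ appears linearly, so the identity separates into conditions $L A_k R = O$ for all $k$, where $L$ and $R$ span the left and right kernels. Such a common annihilation is a $\KK(Y)$-rational version of a shrunk subspace, and this should descend to $\KK$ by a standard argument that generic rank is unaffected by base field extension (the minimum in (\ref{eqn:formula1}) is unchanged under extension of scalars). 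If this descent is problematic, I would instead follow Derksen--Makam and work in the generic division algebra, where $A^{\{d\}}$ is viewed as a matrix over a division ring and kernels become honest submodules.
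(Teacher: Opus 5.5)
\paragraph{Gaps in the proposal.}
The paper does not prove this statement. It is quoted from \cite{DerksenMakam2017}, and only the weaker bound $d\le n$ is attributed to a short regularity argument in \cite{IQS15b}. So your sketch has to stand on its own, and it has two genuine gaps. The first is the counting. From $r_1\ge 1$ and a gain of at least one per step, you reach $\ncrk A$ after $\ncrk A-1$ steps, i.e.\ at $d=\ncrk A$. Even granting your increment lemma, this gives only $d\le n$, not $d\le n-1$. To get $n-1$ you would also need $\rk A\ge 2$ whenever $\ncrk A\ge 2$. This is true, since a linear space of matrices of rank at most one has a common column or a common row and hence nc-rank at most one, but your plan never uses it.

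The second and more serious gap is the proof of the increment lemma. Your specialized matrix has rank exactly $\rk A^{\{d\}}+\rk A+\rk(LZR)$, where $L$ spans the left kernel of the $A^{\{d\}}$-block and $R$ spans the right kernel of $A(w)$. The inequality $\rk A\ge r_d$ you hoped for is false in general. For the generic $3\times 3$ skew-symmetric matrix, $\rk A=2<3=r_2$, and Theorem~\ref{thm:2-approx_nc} only gives $\rk A\ge r_d/2$. So the coupling would have to supply rank $r_d-\rk A+1$, not merely be nonzero.

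Worse, already for $d=1$, where $\rk A=r_1$, the coupling can vanish identically although $r_1<\ncrk A$. Take
\[
A=\begin{pmatrix} A_{11} & S_1\\ S_2 & O\end{pmatrix}
\]
with $S_1,S_2$ generic $3\times 3$ skew-symmetric matrices in disjoint variables and $A_{11}$ a generic $3\times 3$ matrix. Then $\ncrk A=6$, since $A^{\{2\}}$ is block anti-triangular with the (classically) nonsingular blocks $S_1^{\{2\}},S_2^{\{2\}}$. On the other hand $\rk A=5$. The left kernel of $A(y)$ is spanned by $(0,\ell(y)^{\top})$ with $\ell(y)^{\top}S_2(y)=0$, and the right kernel of $A(w)$ is spanned by $(0,k(w)^{\top})^{\top}$ with $S_1(w)k(w)=0$. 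Because of the zero block, $LA_kR=0$ for every $k$, and your specialization of $A^{\{2\}}$ has rank exactly $10=2\rk A$, although the true rank is $12$.

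The expected contradiction fails for a dimension reason, not because of descent from $\KK(Y)$ to $\KK$. The identity $LA_kR=O$ for all $k$, with $L$ of rank $n-r_d$ and $R$ of rank $n-\rk A$, yields via (\ref{eqn:formula1}) only $\ncrk A\le r_d+\rk A$, never $\ncrk A\le r_d$. A vanishing first-order coupling is a single step of a Wong-sequence-type argument, and one block-triangular step from $d$ to $d+1$ does not capture the rest. As a sanity check, your unconditional increment lemma combined with Theorem~\ref{thm:2-approx_nc} would give $d\le\lfloor n/2\rfloor+1$. That is a much stronger result than the statement, so the lemma should not be expected to follow from a perturbation argument of this kind.
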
 
A quick proof for slightly weaker bound $d \leq n$, via the regularity lemma, is given by~\cite[Lemma 5.2]{IQS15b}.

\subsection{Degree of the Dieudonn\'e determinant}

Next we recall a noncommutative version of weighted Edmonds' problem 
developed in~\cite{HH_degdet,HiraiIkeda,HIOS2025}; 
see also \cite{Oki}.
Generalizing $A[c]$, 
we consider the following matrix 
\begin{equation}\label{eqn:B}
B = \sum_{k=1}^m B_kx_k,
\end{equation}
where $B_k = B_k(t) \in \KK(t)^{n \times n}$ 
are $n \times n$ matrices
over the rational function field $\KK(t)$.
We regard $B$ as a matrix over the rational function skew field $(\KK(\langle x_1,\ldots,x_m\rangle))(t)$, which is the fraction of the skew polynomial ring over $\KK(\langle x_1,\ldots,x_m\rangle)$ with commutative indeterminate $t$.
For matrices over a skew field $\FF$,
we consider the {\em Dieudonn\'e determinant}~\cite[Section 9.2]{Cohn_Algebra},
which takes a value in $\FF$ modulo the commutator subgroup of the multiplicative group $\FF \setminus \{0\}$. 
For our case of $\FF= (\KK(\langle x_1,\ldots,x_m\rangle))(t)$, 
the maximum degree $\deg$ with respect to $t$ of the Dieudonn\'e determinant is well-defined.
We use $\Det$ to denote the Dieudonn\'e determinant.
The paper~\cite{HH_degdet} established 
a formula for $\deg \Det B$ analogous to (\ref{eqn:formula1}). 
We will use its variant for 
the maximum degree of subdeterminants in \cite{HIOS2025}; see Theorem~\ref{thm:degDet_B} below.
As before, 
the $d$-th blow-up $B^{\{d\}}$ of $B$ is defined by
\begin{equation*}
B^{\{d\}} = \sum_{k=1}^m B_k \otimes X_k,
\end{equation*}
where $X_k = (x_{k,ij})$ are $d \times d$ indeterminate matrices.
Then, an analogue of Theorems~\ref{thm:blow-up} and \ref{thm:d<=n-1}
is the following: 
\begin{Thm}[\cite{HiraiIkeda}]\label{thm:degDet_B_blowup}
\begin{equation}\label{eqn:degDet_formula1}
    \deg \ncdet B =  \max_{d =1,2,\ldots}\frac{1}{d}\deg \det B^{\{d\}}, 
\end{equation}
where the maximum is attained by $d \leq n-1$.
\end{Thm}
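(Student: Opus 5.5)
The plan is to prove both inequalities in (\ref{eqn:degDet_formula1}) through a min-type characterization of $\deg \ncdet B$. This characterization is the analogue of (\ref{eqn:formula1}) established in \cite{HH_degdet}, and I will assume it:
\begin{equation*}
\deg \ncdet B = \min \{ -\deg \det S - \deg \det T \mid S,T \in GL_n(\KK(t)),\ \deg (SBT)_{ij} \leq 0 \ (i,j\in[n]) \},
\end{equation*}
valid when $B$ is nonsingular over the skew field, i.e., $\ncrk B = n$. The singular case is handled separately. If $\ncrk B < n$, then $\ncdet B = 0$ and $\deg \ncdet B = -\infty$. On the other side, $\rk B^{\{d\}} < nd$ for every $d$, because a rank deficiency of $B^{\{d\}}$ specializes from one over the free skew field; for instance, Theorem~\ref{thm:blow-up} applies coefficientwise after clearing $t$-denominators. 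Hence both sides equal $-\infty$.

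For the inequality $\geq$, I fix any feasible pair $(S,T)$ and pass to $(S\otimes I_d) B^{\{d\}} (T\otimes I_d) = \sum_k (SB_kT)\otimes X_k$. Every entry of this matrix has $t$-degree at most $0$, so its determinant has degree at most $0$. The factors contribute $\deg\det(S\otimes I_d) = d \deg \det S$, and similarly for $T$. Therefore $\deg\det B^{\{d\}} \leq d(-\deg\det S-\deg\det T)$ for every $d$. Taking the minimum over $(S,T)$ gives $\frac1d \deg\det B^{\{d\}} \leq \deg\ncdet B$.

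For the inequality $\leq$ together with the bound $d \leq n-1$, I take an optimal pair $(S,T)$. I write $SBT = A^0 + (\text{terms of degree} \leq -1)$, where $A^0 = \sum_k A^0_k x_k$ is the degree-$0$ coefficient, a linear symbolic matrix over $\KK$. The key claim is that optimality forces $\ncrk A^0 = n$. Suppose not. By Theorem~\ref{thm:FortinReutenauer} there are $P,Q\in GL_n(\KK)$ and $I,J$ with $|I|+|J|>n$ such that $(PA^0Q)[I,J]=O$. I replace $S$ by $DPS$ and $T$ by $TQE$, where $D$ multiplies rows in $I$ by $t$ and $E$ multiplies columns outside $J$ by $t^{-1}$. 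A block-by-block check shows that all entries keep degree $\leq 0$: the block $I\times J$ had degree $\leq -1$ and gains $+1$, the block $I\times J^c$ has net change $0$, the block $I^c\times J$ has change $0$, and the block $I^c\times J^c$ has change $-1$. Meanwhile the objective changes by $-|I| + (n-|J|) < 0$, which contradicts optimality.

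With $\ncrk A^0=n$ in hand, Theorems~\ref{thm:blow-up} and \ref{thm:d<=n-1} give some $d\leq n-1$ with $\det (A^0)^{\{d\}} \neq 0$. The degree-$0$ part of $\det\bigl((S\otimes I_d)B^{\{d\}}(T\otimes I_d)\bigr)$ is exactly $\det (A^0)^{\{d\}}$, so this determinant has degree exactly $0$. Consequently $\frac1d\deg\det B^{\{d\}} = -\deg\det S-\deg\det T = \deg\ncdet B$. I expect the main obstacle to be the nonsingular min-formula itself, if it could not be cited from \cite{HH_degdet}. Proving it would require a Dieudonn\'e-determinant valuation argument over $(\KK(\langle x\rangle))(t)$, for example via a Smith-type normal form over the valuation ring $\KK[[t^{-1}]]$-analogue. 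Everything downstream of that formula is the block-scaling argument above.
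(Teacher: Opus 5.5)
Your argument is correct. The paper does not prove this statement; it imports it from \cite{HiraiIkeda}. Your route has three steps. First, take a minimizer $(S,T)$ of the min-formula from \cite{HH_degdet}. Second, use the Fortin--Reutenauer scaling to show that its degree-zero part $A^0$ is nc-nonsingular. Third, invoke Theorems~\ref{thm:blow-up} and~\ref{thm:d<=n-1} to get $d\le n-1$ with $\det (A^0)^{\{d\}}\neq 0$, which pins $\deg\det B^{\{d\}}$ exactly. This is the same mechanism the paper uses in its proof of Theorem~\ref{thm:2nd_blowup}, with the Derksen--Makam bound in place of the Oki--Soma second blow-up result.

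\textbf{Singular case.} Your justification there is not really an argument. The phrase ``Theorem~\ref{thm:blow-up} applies coefficientwise'' does not link singularity over $(\KK(\langle x\rangle))(t)$ to singularity of the blow-ups. You can close this with your own tools.
\begin{itemize}
\item Suppose $\det B^{\{d\}}\neq 0$ for some $d$.
\item Your bound $\frac1d\deg\det B^{\{d\}}\le -\deg\det S-\deg\det T$ holds for every feasible pair, without using the min-formula. So the integer-valued objective is bounded below and has a minimizer.
\item Your scaling argument makes the $A^0$ of that minimizer nc-nonsingular.
\item Then $\deg\Det(SBT)=0$ by \cite[Lemma 2.10]{HH_degdet}, so $B$ is nonsingular.
\end{itemize}

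\textbf{The min-formula.} The formula you flag as the main obstacle follows the same way, so it need not be cited separately. The paper quotes the required facts from \cite[Lemmas 2.8 and 2.10]{HH_degdet}:
\begin{itemize}
\item $\deg\Det$ is additive;
\item entries of degree $\le 0$ force $\deg\Det\le 0$, with equality if and only if the degree-zero part is nc-nonsingular.
\end{itemize}
These give $\deg\Det B\le -\deg\det S-\deg\det T$ for every feasible pair, with equality exactly when $A^0$ is nc-nonsingular. When $B$ is nonsingular, a minimizer exists because the objective is integer-valued and bounded below by $\deg\Det B>-\infty$. At that minimizer your scaling argument forces $A^0$ to be nc-nonsingular, so equality holds and the formula follows.
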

Define the maximum degrees $\delta_{\rm max}(B)$ and $\varDelta_{\rm max}(B)$ of subdeterminants of $B$ by 
\begin{eqnarray}
\delta_{\rm max}(B) &:= &\max_{I,J \subseteq [n]:|I|=|J|} \deg \det B[I,J], \nonumber \\
\varDelta_{\rm max}(B) &:= &\max_{I,J \subseteq [n]: |I|=|J|} \deg \Det B[I,J] \nonumber \\
&=& \max_{d =1,2,\ldots} \max_{I,J \subseteq [n]: |I|=|J|} 
\frac{1}{d}\deg \det B[I,J]^{\{d\}}.
\label{eqn:Delta(B)} 
\end{eqnarray}
We describe a dual formula of $\varDelta_{\rm max}(B)$ as follows.
For $p \in (\ZZ \cup \{-\infty\})^n$, let $(t^{p})$ 
denote the diagonal matrix with diagonals $t^{p_1},t^{p_2},\ldots,t^{p_n}$ in order, where we let $t^{-\infty} := 0$.
Let $\KK(t)^{-} \subseteq \KK(t)$ denote the ring of 
rational functions $p$ with $\deg p \leq 0$, 
and let $GL_n(\KK(t)^{-})$ denote the group 
of matrices $M \in (\KK(t)^-)^{n \times n}$ with $M^{-1} \in (\KK(t)^-)^{n \times n}$ 
(that is equivalent to $\deg \det M = 0$).

\begin{Thm}[\cite{HIOS2025}]\label{thm:degDet_B}
\begin{eqnarray}
\varDelta_{\rm max}(B) &=& \min \{  {\bf 1}^{\top} p + {\bf 1}^{\top}q \mid \nonumber \\
&& \quad \deg ((t^{-p}) PB_k Q(t^{-q}))_{ij}\leq  0 \ (i,j \in [n],k\in [m]), \nonumber \\
&& \quad p,q \in \ZZ_{\geq 0}^n,\  P,Q \in GL_n(\KK(t)^-) \}. \label{eqn:degDet_formula2}
\end{eqnarray}
\end{Thm}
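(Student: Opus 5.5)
The plan is to prove the two inequalities separately: weak duality ``$\leq$'' by a degree count on blow-ups, and strong duality ``$\geq$'' by reducing to the min–max formula of \cite{HH_degdet} for $\deg \Det$ of a single square matrix, applied to an augmented matrix.

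\emph{Weak duality.} Fix a feasible tuple $(p,q,P,Q)$ and put $B' := (t^{-p})PBQ(t^{-q}) = \sum_k B'_k x_k$. By feasibility every entry of every $B'_k$ has degree $\leq 0$. Then $B = P^{-1}(t^{p})B'(t^{q})Q^{-1}$, and so for $|I|=|J|$ we have
\[
B[I,J] = P^{-1}[I,[n]]\,(t^{p})\,B'\,(t^{q})\,Q^{-1}[[n],J].
\]
Taking $d$-th blow-ups is compatible with this factorization, since each factor other than $B'$ is a scalar matrix over $\KK(t)$ and is tensored with $I_d$. Now expand $\det B[I,J]^{\{d\}}$ by the Cauchy--Binet formula. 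Every minor of $P^{-1}\otimes I_d$ and of $Q^{-1}\otimes I_d$ has degree $\leq 0$, because their entries lie in $\KK(t)^-$. Every minor of $B'^{\{d\}}$ has degree $\leq 0$ in $t$. Any minor of $(t^{p})\otimes I_d$ has degree at most $d\,{\bf 1}^\top p$ because $p\geq 0$, and the same holds for $q$. Hence $\frac1d \deg\det B[I,J]^{\{d\}} \leq {\bf 1}^\top p+{\bf 1}^\top q$ for all $d,I,J$, and (\ref{eqn:Delta(B)}) gives $\varDelta_{\max}(B)\leq$ RHS.

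\emph{Strong duality.} Introduce new (noncommuting) variables $y_1,\dots,y_n,z_1,\dots,z_n$. Form the $2n\times 2n$ linear matrix
\[
\tilde B := \begin{pmatrix} B & D_y \\ D_z & O\end{pmatrix}, \qquad D_y = \mathrm{diag}(y_i),\ D_z=\mathrm{diag}(z_i).
\]
First I would show $\deg \Det \tilde B = \max_{|I|=|J|}\deg\Det B[I,J]$. Via Theorem~\ref{thm:degDet_B_blowup}, this reduces to the commutative statement for blow-ups. There, a Laplace expansion along the last $n$ rows and columns writes $\det \tilde B^{\{d\}}$ as a sum over $(I,J)$ of $\pm \det B[I,J]^{\{d\}}$ times products of determinants of distinct blocks of $Y_i,Z_j$. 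These products are monomially independent, so no cancellation in the top degree of $t$ occurs. Next I would apply the formula of \cite{HH_degdet}:
\[
\deg\Det\tilde B=\min\{-\deg\det\tilde P-\deg\det\tilde Q \mid \deg(\tilde P\tilde B_\ell\tilde Q)_{ij}\leq 0 \ \text{for all coefficient matrices } \tilde B_\ell\}.
\]
Finally I would convert an optimal $(\tilde P,\tilde Q)$ into a feasible tuple $(p,q,P,Q)$ with the same objective value.

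\emph{Main obstacle.} The conversion in the last step is the hardest part. My first attempt would use a Smith--McMillan / Iwasawa-type decomposition $\tilde P = U(t^{-a})V$ with $U,V\in GL_{2n}(\KK(t)^-)$; I expect the constraints from the $D_y,D_z$ blocks to force a block-triangular normal form. Concretely, the constraints $\deg(\tilde P E_{i,n+i}\tilde Q)\leq 0$ and $\deg(\tilde P E_{n+i,i}\tilde Q)\leq 0$ say that the row degrees of $\tilde P$ in column $i$ and the column degrees of $\tilde Q$ in row $n+i$ are complementary. After an exchange argument I would try to arrange $\tilde P = \mathrm{diag}((t^{-p})P,\ (t^{p'}))$ and $\tilde Q = \mathrm{diag}(Q(t^{-q}),\ (t^{q'}))$ with $p,q\geq 0$. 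Nonnegativity should follow from the complementary-degree constraints together with $\deg \leq 0$ of the entries of the lower-right blocks. The main risk is that this normalization may require uncrossing rather than a direct block decomposition; if so, I would fall back on proving strong duality by a direct augmenting/scaling argument in the style of \cite{HH_degdet}.
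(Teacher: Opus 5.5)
The paper gives no proof of this theorem; it quotes it from \cite{HIOS2025}, so your proposal has to stand on its own. Your weak-duality half is correct. The factorization $B[I,J]^{\{d\}}=(P^{-1}[I,[n]]\otimes I_d)((t^{p})\otimes I_d)B'^{\{d\}}((t^{q})\otimes I_d)(Q^{-1}[[n],J]\otimes I_d)$, together with Cauchy--Binet and (\ref{eqn:Delta(B)}), does give $\varDelta_{\rm max}(B)\le$ RHS.

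The strong-duality half has a genuine gap, and it starts with the augmented matrix. With a zero lower-right block, $\tilde B$ does not depend on $B$. Left-multiplying by the unitriangular matrix $\begin{pmatrix} I & -BD_z^{-1}\\ O & I\end{pmatrix}$ turns $\tilde B$ into $\begin{pmatrix} O & D_y\\ D_z & O\end{pmatrix}$. Hence $\Det\tilde B=\pm\prod_i y_i\prod_i z_i$ and $\deg\Det\tilde B=0$ for every $B$. Equivalently, in $\tilde B^{\{d\}}$ each row of $[D_Z\ O]$ is supported in a single $d$-block of the first $nd$ columns. The Laplace expansion along these rows therefore has the single term $\pm\det D_Y\det D_Z$, and your claimed sum over $(I,J)$ does not exist. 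Already $n=m=1$, $B=tx_1$ is a counterexample: $\varDelta_{\rm max}(B)=1$, but $\det\tilde B=-y_1z_1$ has degree $0$.

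To encode all pairs $(I,J)$ you need a nonzero lower-right block, e.g.\ a block $W$ of fresh variables. Then, commutatively, $\det\tilde B=\sum_{|I|=|J|}\pm\det B[I,J]\det W[J,I]\prod_{i\notin I}y_i\prod_{j\notin J}z_j$. Even so, in the $d$-blow-up the expansion runs over all equal-size $R,C\subseteq[nd]$, not only unions of $d$-blocks. The inequality you actually need for the ``$\geq$'' direction is $\deg\Det\tilde B\le\varDelta_{\rm max}(B)$. This requires bounding such non-aligned minors of $B^{\{d\}}$ by $d\,\varDelta_{\rm max}(B)$. That is not a no-cancellation argument and needs its own proof, for instance a Smith-form argument over the valuation ring of $UD_d(\KK(t))$.

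More importantly, the conversion step, which carries all the content of strong duality, is not carried out. The elementary divisor theorem over the DVR $\KK(t)^-$ lets you replace $\tilde P$ by $(t^{-a})V$ with $V\in GL_{2n}(\KK(t)^-)$. But $V$ generally mixes the two block rows. Nothing in your sketch does any of the following:
\begin{itemize}
\item force $\tilde P,\tilde Q$ into block-diagonal form;
\item extract $P,Q\in GL_n(\KK(t)^-)$ from them;
\item show that the resulting $p,q$ are nonnegative with ${\bf 1}^{\top}p+{\bf 1}^{\top}q\le-\deg\det\tilde P-\deg\det\tilde Q$.
\end{itemize}
Nonnegativity of $p,q$ is exactly what separates $\varDelta_{\rm max}(B)$ from the formula of \cite{HH_degdet} for $\deg\Det$ of a single nonsingular matrix, where the corresponding $p,q$ range over all of $\ZZ^n$. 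So ``nonnegativity should follow from the complementary-degree constraints'' is the theorem itself, not a detail. As written, only $\varDelta_{\rm max}(B)\le$ RHS is established.
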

When $B= A[c]$, 
two formulas (\ref{eqn:Delta(B)}) and 
(\ref{eqn:degDet_formula2}) are interpreted as a duality for linear optimization on the polytope ${\cal Q}(A)$ determined from $\supp \det A[I,J]^{\{d\}}$.
Notice that 
$\det A[I,J]^{\{d\}}$ is a polynomial of variables
$x_{k,ij}$ for $k \in [m], i,j \in [d]$.
If we write the exponent of $x_{k,ij}$ by $z_{k,ij}$, then
$\supp \det A[I,J]^{\{d\}}$ consists of $md^2$-dimensional integer vectors $z = (z_{k,ij})_{k \in [m],i,j \in [d]}$.
For such a vector $z= (z_{k,ij})_{k \in[m],i,j \in [d]}$, define
the projection $\proj_d(z)\in \QQ^m_{\geq 0}$ by
\begin{equation*}
	\proj_d (z)_k \coloneqq   \frac{1}{d} \sum_{i,j \in [d]} z_{k,ij} \quad (k \in [m]).
\end{equation*}
Then it holds
\begin{equation*}
\frac{1}{d}\deg \det (A[c])[I,J]^{\{d\}} = \max \{c^{\top} \proj_d(z) \mid z \in \supp \det A[I,J]^{\{d\}} \}.
\end{equation*}
For $d =1,2,\ldots,$ and $\ell = 0,1,\ldots,n$, define  subset ${\cal S}_\ell^{d} \subseteq \QQ_{\geq 0}^m$ by
\begin{equation*}
{\cal S}_\ell^{d} := \bigcup_{I,J \subseteq [n]: |I|=|J|=\ell} \proj_d(\supp \det A[I,J]^{\{d\}}).
\end{equation*}
Define ${\cal Q}(A)$ by the convex hull of the union of all ${\cal S}_{\ell}^d$:
\begin{equation}\label{eqn:Q(A)}
{\cal Q}(A) \coloneqq  \Conv \bigcup_{d=1}^\infty \bigcup_{\ell=0}^n  {\cal S}_\ell^d.
\end{equation}
It is clear that ${\cal P}(A) \subseteq {\cal Q}(A)$.
By (\ref{eqn:degDet_formula1}), it holds 
\begin{equation}
\varDelta_{\rm max}(A[c]) = \max \{ c^{\top}u \mid u \in {\cal Q}(A)\} \  (= f_{{\cal Q}(A)}(c)).
\end{equation}
Theorem~\ref{thm:degDet_B} is sharpened as follows:
\begin{Thm}[\cite{HIOS2025}]
\begin{eqnarray}
\varDelta_{\rm max}(A[c]) &= &  \min \{  {\bf 1}^{\top} p + {\bf 1}^{\top}q \mid \nonumber \\
&& \quad \deg ((t^{-p}) SA_k t^{c_k}T(t^{-q}))_{ij}\leq  0 \ (i,j \in [n],k\in [m]), \nonumber \\
&& \quad p,q \in \ZZ_{\geq 0}^n,\  S,T \in GL_n(\KK) \}. \label{eqn:A[c]_dual}
\end{eqnarray}
\end{Thm}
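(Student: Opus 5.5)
The easy inequality comes first. Constant matrices $S,T\in GL_n(\KK)$ lie in $GL_n(\KK(t)^-)$. Also $A[c]$ is an instance of $B$ in (\ref{eqn:B}) with $B_k=A_kt^{c_k}$. So every feasible triple $(p,q,S,T)$ in (\ref{eqn:A[c]_dual}) is feasible in (\ref{eqn:degDet_formula2}), and Theorem~\ref{thm:degDet_B} gives that $\varDelta_{\rm max}(A[c])$ is at most the right-hand side of (\ref{eqn:A[c]_dual}). The work is the reverse inequality. For it I take an optimal $(p,q,P,Q)$ for (\ref{eqn:degDet_formula2}) and try to replace $P,Q$ by constant matrices $S,T$ without changing $p,q$.

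Rewrite the constraint as $\deg (PA_kQ)_{ij}\le p_i+q_j-c_k$ for all $i,j,k$. Let $\mathcal{O}:=\KK(t)^-$, and call $G\in GL_n(\mathcal{O})$ \emph{$p$-admissible} if $(t^{-p})G(t^{p})\in GL_n(\mathcal{O})$, i.e.\ $\deg G_{ij}\le p_i-p_j$ and likewise for $G^{-1}$. The feasible set is invariant under $P\mapsto GP$ for $p$-admissible $G$, and under $Q\mapsto QH$ for $q$-admissible $H$. Indeed,
\[
(t^{-p})GPA_kQ(t^{-q}) = \bigl[(t^{-p})G(t^{p})\bigr]\bigl[(t^{-p})PA_kQ(t^{-q})\bigr],
\]
and multiplying by a matrix with entries of degree $\le 0$ preserves the condition ``degree $\le 0$''. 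So it suffices to prove a decomposition $GL_n(\mathcal{O})=\mathcal{G}_p\cdot GL_n(\KK)$, where $\mathcal{G}_p$ is the group of $p$-admissible matrices, and the analogous statement for $Q$ on the right.

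For the decomposition, permute coordinates so that $p_1\ge p_2\ge\cdots\ge p_n$; the permutation matrix is constant and can be absorbed into $S$. Then $\mathcal{G}_p$ contains every matrix of $GL_n(\mathcal{O})$ that is block upper triangular with respect to the level sets of $p$ and whose above-diagonal entries are arbitrary in $\mathcal{O}$. It also contains every $G$ with entries $G_{ij}$ of degree $\le p_i-p_j$ below the diagonal. Given $P$, write $P=P_0+P'$ with $P_0\in GL_n(\KK)$ its constant term and $\deg P'_{ij}\le -1$. I would clear the entries of $PP_0^{-1}$ that are too large, by a descending induction on the gap $p_j-p_i$. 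Each step multiplies on the left by an elementary $p$-admissible matrix, or on the right by a constant matrix that is absorbed into $S$. The aim is to reach a matrix that is itself $p$-admissible, which writes $P=GS$ with $G\in\mathcal{G}_p$ and $S\in GL_n(\KK)$. This is an Iwasawa/Bruhat-type decomposition for the parahoric subgroup $\mathcal{G}_p$ of $GL_n(\KK[[t^{-1}]])$, restricted to rational entries.

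I expect this decomposition step to be the main obstacle. Entries of $P$ of degree in $(p_i-p_j,\,-1]$ with $p_j-p_i\ge 2$ are not automatically admissible, and the elimination must not spoil entries already fixed. If the elimination cannot be made to work directly, I would fall back on an LP argument. Since $f_{{\cal Q}(A)}$ is piecewise linear, I would perturb $c$ generically, so that the optimal $p,q$ are determined by tight constraints. Then I would show that only the leading coefficients, i.e.\ the constant matrices $P_0,Q_0$, matter for those tight constraints, so that $S=P_0$, $T=Q_0$ works after a suitable shift of $p,q$ with the same value ${\bf 1}^\top p+{\bf 1}^\top q$.
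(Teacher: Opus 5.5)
Your easy inequality is correct, but the reverse direction as you set it up cannot work: the decomposition $GL_n(\mathcal{O})=\mathcal{G}_p\cdot GL_n(\KK)$ is false whenever $\max_i p_i-\min_i p_i\ge 2$. Take $n=2$, $p=(2,0)$ and $P=\begin{pmatrix}1&0\\ t^{-1}&1\end{pmatrix}\in GL_2(\mathcal{O})$. Suppose $P=GS$ with $G\in\mathcal{G}_p$ and $S^{-1}=\begin{pmatrix}a&b\\ c&d\end{pmatrix}\in GL_2(\KK)$. Then $G_{21}=at^{-1}+c$ must have degree at most $p_2-p_1=-2$. This forces $a=c=0$, contradicting invertibility. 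So the obstruction you anticipated (entries of degree in $(p_i-p_j,-1]$ with $p_j-p_i\ge 2$) is real, not a technicality of the elimination. Keeping the lattice $P^{-1}(t^{p})\mathcal{O}^n$ itself while making $P$ constant asks too much. Your fallback names the right matrices $S=P_0$, $T=Q_0$, but it gives no argument. The generic perturbation of $c$, the tight constraints and the ``suitable shift'' of $p,q$ are neither justified nor needed; perturbing $c$ also changes the problem whose value you must match.

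The missing idea is a direct observation that holds for every feasible $(p,q,P,Q)$, with $p,q$ unchanged. Taking the constant term (evaluation at $t=\infty$) is a ring homomorphism $\mathcal{O}\to\KK$. Hence $\det P_0$ is the constant term of the unit $\det P$, so $P_0\in GL_n(\KK)$, and likewise $Q_0\in GL_n(\KK)$. Also the constant term of $(PA_kQ)_{ij}$ is $(P_0A_kQ_0)_{ij}$. For constant $S,T$, the constraint in (\ref{eqn:A[c]_dual}) just says: $(SA_kT)_{ij}=0$ whenever $c_k>p_i+q_j$. Now suppose $(P_0A_kQ_0)_{ij}\neq 0$. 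Then $\deg(PA_kQ)_{ij}=0$, and feasibility of $(p,q,P,Q)$ in (\ref{eqn:degDet_formula2}) gives $c_k-p_i-q_j\le 0$. Thus $(p,q,P_0,Q_0)$ is feasible for (\ref{eqn:A[c]_dual}) with the same objective value ${\bf 1}^{\top}p+{\bf 1}^{\top}q$. Applying this to an optimal solution from Theorem~\ref{thm:degDet_B} gives the reverse inequality. The paper does not reprove this statement; it quotes it from \cite{HIOS2025} as a sharpening of Theorem~\ref{thm:degDet_B}, and the argument above is all that sharpening requires.
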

By Theorem~\ref{thm:degDet_B_blowup}, the range of $d$ 
in (\ref{eqn:Q(A)}) may be replaced by $1\leq d \leq n-1$, 
and hence ${\cal Q}(A)$ is a rational polytope. 
By (\ref{eqn:A[c]_dual}), $f_{{\cal Q}(A)}(c) (= \varDelta_{\rm max}(A[c]))$ is an integer for every $c \in \ZZ^n$. By Lemma~\ref{lem:EdmondsGiles}, we have:
\begin{Thm}[\cite{HIOS2025}]
    ${\cal Q}(A)$ is an integral polytope.
\end{Thm}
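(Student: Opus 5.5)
The statement follows from three ingredients already in place: ${\cal Q}(A)$ is a rational polytope, its support function is integer-valued on integer vectors, and Lemma~\ref{lem:EdmondsGiles} then gives integrality. The plan is to record each of these facts carefully.

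\emph{Rationality.} By Theorem~\ref{thm:degDet_B_blowup} applied to each $B = A[c][I,J]$, the value $f_{{\cal Q}(A)}(c) = \varDelta_{\rm max}(A[c])$ is attained with $d \le n-1$ for every $c$. Hence the convex hull in (\ref{eqn:Q(A)}) restricted to $1 \le d \le n-1$ has the same support function as ${\cal Q}(A)$. Two closed convex sets with equal support functions coincide, so ${\cal Q}(A)$ equals this restricted hull. That hull is the convex hull of finitely many points. Indeed, for each $d \le n-1$ and each pair $I,J$, the set $\supp \det A[I,J]^{\{d\}}$ is finite, and its image under $\proj_d$ lies in $\frac{1}{d}\ZZ^m_{\geq 0}$. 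So ${\cal Q}(A)$ is a polytope whose extreme points are rational.

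\emph{Integrality of the support function.} For $c \in \ZZ^m$, the identity $f_{{\cal Q}(A)}(c) = \varDelta_{\rm max}(A[c])$ holds by the displayed relation following (\ref{eqn:Q(A)}). By (\ref{eqn:A[c]_dual}), $\varDelta_{\rm max}(A[c])$ is a minimum of ${\bf 1}^\top p + {\bf 1}^\top q$ over $p,q \in \ZZ^n_{\geq 0}$. The feasible set is nonempty, since taking $p,q$ large and $S = T = I$ satisfies the constraints. A nonempty set of nonnegative integers has a minimum, and that minimum is an integer. Hence $f_{{\cal Q}(A)}(c) \in \ZZ$.

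\emph{Conclusion.} Since ${\cal Q}(A)$ is a rational polytope and $f_{{\cal Q}(A)}(c) \in \ZZ$ for all $c \in \ZZ^m$, Lemma~\ref{lem:EdmondsGiles} shows that ${\cal Q}(A)$ is integral.

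The only delicate step is rationality, specifically the passage from an infinite union over $d$ to finitely many $d$. It requires that the bound $d \le n-1$ in Theorem~\ref{thm:degDet_B_blowup} is uniform in $c$, which holds because $n$ does not depend on $c$. One must also use that the supremum over all $d$ defining $f_{{\cal Q}(A)}$ is exactly the $\varDelta_{\rm max}$ given by (\ref{eqn:Delta(B)}).
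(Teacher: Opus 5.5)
Your proposal is correct and follows the paper's argument: rationality comes from the bound $d \le n-1$ in Theorem~\ref{thm:degDet_B_blowup}, integrality of $f_{{\cal Q}(A)}(c) = \varDelta_{\rm max}(A[c])$ comes from the integer minimization (\ref{eqn:A[c]_dual}), and Lemma~\ref{lem:EdmondsGiles} then gives integrality. One minor tidy-up concerns your appeal to ``two closed convex sets with equal support functions coincide'': ${\cal Q}(A)$ is not known to be closed a priori, and you compare support functions only at integer $c$. It is cleaner to say that each generator $\proj_d(z)$ satisfies $c^{\top}\proj_d(z) \le f(c)$ for all integer $c$, where $f$ is the support function of the hull over $1 \le d \le n-1$; by homogeneity and continuity this extends to all real $c$, so $\proj_d(z)$ lies in that polytope.
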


\section{Results}\label{sec:results}

\subsection{Polyhedral regularity lemma}
We define intermediate polytopes
${\cal P}^d_\ell (A)$, ${\cal P}^d(A)$, and ${\cal P}^{\leq d}(A)$ toward ${\cal Q}(A)$:
\begin{equation}
{\cal P}^d_\ell(A) \coloneqq   \Conv {\cal S}_\ell^d, \quad {\cal P}^{d}(A) \coloneqq  \Conv \bigcup_{\ell=0}^n {\cal S}_\ell^d, \quad
 {\cal P}^{\leq d}(A) \coloneqq  \Conv \bigcup_{f=1}^d \bigcup_{\ell=0}^n {\cal S}_\ell^f,
\end{equation}
We show that they are also integral polytopes. 
This can be viewed as an extension of the regularity lemma (Theorem~\ref{thm:regularity}). 
\begin{Thm}\label{thm:integrality}
    ${\cal P}_\ell^d(A)$, ${\cal P}^{d}(A)$, and ${\cal P}^{\leq d}(A)$ are all integral polytopes.
\end{Thm}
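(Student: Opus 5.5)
We apply Lemma~\ref{lem:EdmondsGiles} to each of the three polytopes. Each is the convex hull of finitely many points of $\frac{1}{d}\ZZ^m$ (or of $\frac{1}{f}\ZZ^m$, $f\le d$), so each is rational. It therefore suffices to show that the support function takes integer values on every $c\in\ZZ^m$. The support function of a convex hull of a union is the maximum of the support functions of the pieces. Hence
\[
f_{{\cal P}^{d}(A)}(c)=\max_{\ell} f_{{\cal P}^d_\ell(A)}(c)
\quad\text{and}\quad
f_{{\cal P}^{\le d}(A)}(c)=\max_{f\le d}\max_\ell f_{{\cal P}^f_\ell(A)}(c).
\]
So integrality of ${\cal P}^d_\ell(A)$ for all $d,\ell$ implies the other two statements. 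The whole task reduces to showing that $f_{{\cal P}^d_\ell(A)}(c)\in\ZZ$.

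For this, we use the formula stated before (\ref{eqn:Q(A)}):
\[
f_{{\cal P}^d_\ell(A)}(c)=\max_{|I|=|J|=\ell}\frac1d\deg\det (A[c])[I,J]^{\{d\}},
\]
where we take $-\infty$ if all these determinants vanish; the polytope is then empty and the claim is vacuous. It thus suffices to prove a degree version of the regularity lemma: for any $B=\sum_k B_kx_k$ with $B_k\in\KK(t)^{\ell\times\ell}$ of the form $A_k t^{c_k}$ and $\det B^{\{d\}}\neq 0$, we need $\deg\det B^{\{d\}}\in d\ZZ$.

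I would prove this following Derksen--Makam's generic division algebra argument. Let $D_d$ be the generic division algebra of degree $d$, generated by generic matrices $X_1,\dots,X_m$ inside $M_d(\KK(\{x_{k,ij}\}))$. It is a division algebra of index $d$ over its center $Z$. Then $B^{\{d\}}$ is the image of $B=\sum_k B_k X_k$, viewed as an $\ell\times\ell$ matrix over the skew field $D_d(t)$; here $t$ is central, and $D_d(t)$ is still a division algebra of degree $d$ over $Z(t)$. The key fact is that $\det B^{\{d\}}$ equals the reduced norm of $B\in M_\ell(D_d(t))$. The reduced norm is multiplicative and factors through the Dieudonn\'e determinant: $\mathrm{Nrd}(B)=\mathrm{Nrd}_{D_d(t)}(\Det B)$. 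Now $t$ is central, so the $t$-degree on $Z(t)$ extends to a valuation on the division algebra $D_d(t)$. For $a\in D_d(t)$ we have $\deg \mathrm{Nrd}(a)= d\cdot\deg a$, because the reduced norm of $a$ is, up to sign, the constant term of the reduced characteristic polynomial of $a$, i.e.\ a product of $d$ conjugates of equal valuation. Hence $\deg\det B^{\{d\}}=d\deg\Det B\in d\ZZ$.

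An alternative route, closer to the paper's tools, is to apply Theorem~\ref{thm:degDet_B} (or (\ref{eqn:A[c]_dual})) directly to the $\ell\times\ell$ blocks. However, that would only show the maximum over all $d'$ is an integer, not the value at a fixed $d$. The main obstacle is therefore the valuation step: verifying that the $t$-adic degree extends to $D_d(t)$ and that the reduced norm scales the degree by exactly $d$. The cleanest way is to note that $D_d(t)=D_d\otimes_Z Z(t)$, so elements can be written as quotients of polynomials in $t$ with coefficients in $D_d$. The leading coefficient map is then multiplicative, since $D_d$ has no zero divisors. Consequently, the reduced norm of $\sum_i a_i t^i$ with top coefficient $a_N\neq0$ has top coefficient $\mathrm{Nrd}(a_N)\neq 0$ in degree $dN$, which gives the claim.
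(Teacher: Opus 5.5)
Your proposal is correct and follows essentially the paper's route: the same reduction to ${\cal P}^d_\ell(A)$ via support functions and Lemma~\ref{lem:EdmondsGiles}, and the same generic-division-algebra argument, where your leading-coefficient computation is exactly Lemma~\ref{lem:deg_S} (and is what actually proves $\deg \mathrm{Nrd}(a) = d\deg a$, so the heuristic about conjugates of equal valuation is not needed). The only difference is packaging: you cite the standard fact that the reduced norm (the $\ell d\times \ell d$ determinant) factors through the Dieudonn\'e determinant, while the paper proves this directly in Lemma~\ref{lem:degDet_d_M} via the Bruhat decomposition $M=LPDU$; like the paper, you also need $m\geq 2$ for the generic division algebra, which is harmless since you can add a dummy variable with zero coefficient matrix.
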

Thus, we obtain the following hierarchy of
integral polytopes 
containing ${\cal P}(A)$:
\begin{equation}
{\cal P}(A) = {\cal P}^{\leq 1}(A) \subseteq {\cal P}^{\leq 2}(A) 
\subseteq \cdots \subseteq {\cal P}^{\leq n-1}(A)  = \cdots = {\cal Q}(A).
\end{equation}
We expect that the minimum $r$ with ${\cal P}^{\leq r}(A) = {\cal Q}(A)$
reflects the complexity of Edmonds' problem for $A$.
One example of $r=1$ is the case where each $A_k$ is rank one.
In this case, ${\cal P}(A)= {\cal Q}(A)$ is the polytope of linear matroid intersection~\cite[Section 3.2.4]{HIOS2025}.
We will see an example of $r=2$ in Section~\ref{subsec:rank2-skew-symmetric}.
\paragraph{Proof of Theorem~\ref{thm:integrality}.}
By ${\cal P}^d(A) = \Conv \bigcup_{\ell}{\cal P}^{d}_{\ell}(A)$ 
and ${\cal P}^{\leq d}(A) = \Conv \bigcup_{f} {\cal P}^f(A)$,
it suffices to show that ${\cal P}_{\ell}^d(A)$ is integral.
Since 
\begin{equation*} 
f_{{\cal P}_{\ell}^d(A)}(c) 
= \frac{1}{d} \max_{I,J \subseteq [n]: |I|=|J|=\ell } \deg \det (A[c])[I,J]^{\{d\}},
\end{equation*}
by Lemma~\ref{lem:EdmondsGiles}
it suffices to show $\deg \det (A[c])[I,J]^{\{d\}} \in d \ZZ \cup \{-\infty\}$. This follows from
\begin{Lem}\label{lem:regularity_B} 
For matrix $B$ in (\ref{eqn:B}), it holds
$\deg \det B^{\{d\}} \in d \ZZ \cup \{-\infty\}$.
\end{Lem}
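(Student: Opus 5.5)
The plan is to follow the generic division algebra approach of Derksen and Makam~\cite{DerksenMakam2018}, with a degree argument added for the commutative indeterminate $t$.

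First I set up the algebra. Let $R \subseteq \KK(\{x_{k,ij}\})^{d\times d}$ be the $\KK$-algebra generated by the generic matrices $X_1,\ldots,X_m$. Let $D_d$ be its ring of central quotients, the generic division algebra. It is a division algebra of degree $d$ over its center $Z$, and the inclusion $D_d \subseteq \KK(\{x_{k,ij}\})^{d\times d}$ splits it, in the sense that $D_d \otimes_Z \KK(\{x_{k,ij}\}) \cong \KK(\{x_{k,ij}\})^{d\times d}$. Adjoin a central indeterminate $t$ and put $D := D_d \otimes_Z Z(t)$. Since $Z(t)$ is a purely transcendental extension of $Z$, $D$ is again a division algebra of degree $d$ over $Z(t)$. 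Every entry of $B_k$ lies in $\KK(t) \subseteq Z(t)$, so $B=\sum_k B_k x_k$ with $x_k \mapsto X_k$ defines a matrix $\tilde B \in D^{n\times n}$. Under the splitting $M_n(D) \hookrightarrow M_{nd}(\KK(\{x_{k,ij}\})(t))$, the matrix $\tilde B$ maps exactly to $B^{\{d\}}$. Hence $\det B^{\{d\}} = \mathrm{Nrd}(\tilde B)$, the reduced norm of $\tilde B$ in the central simple algebra $M_n(D)$ of degree $nd$.

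The key step is a one-element degree statement: for every nonzero $a \in D$,
\[
\deg \mathrm{Nrd}_D(a) = d \cdot \deg a ,
\]
where $\deg$ denotes the $t$-degree extended to $D$. I prove this in three steps.
\begin{enumerate}
\item Since $D = D_d(t)$ is the central localization of $D_d[t]$, every element has the form $a = b\,g^{-1}$ with $b \in D_d[t]$ and $g \in Z[t]$. I define $\deg a := \deg b - \deg g$, using the leading term of $b$.
\item Write $b = b_r t^r + \cdots + b_0$ with $b_r \in D_d\setminus\{0\}$. In a basis of $D_d$ over $Z$, the reduced norm is a homogeneous polynomial function of degree $d$, and its coefficients lie in $Z$, which does not involve $t$. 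Hence $\mathrm{Nrd}(b)$ is a polynomial in $t$ with top coefficient $\mathrm{Nrd}(b_r)\,t^{dr}$.
\item This top coefficient is nonzero because $b_r$ is invertible in the division algebra $D_d$. Therefore $\deg\mathrm{Nrd}(b) = dr$. Also $\mathrm{Nrd}(g) = g^d$, so $\deg\mathrm{Nrd}(g) = d\deg g$, and multiplicativity of $\mathrm{Nrd}$ gives the claim.
\end{enumerate}

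To finish, I pass from elements to matrices. If $\tilde B$ is singular in $M_n(D)$, then $\det B^{\{d\}} = 0$ and the degree is $-\infty$. Otherwise Gaussian elimination over the skew field $D$ writes $\tilde B = E\,\mathrm{diag}(a_1,\ldots,a_n)\,E'$, where $E,E'$ are products of elementary and permutation matrices. Reduced norm is multiplicative, equals $\pm 1$ on these factors, and satisfies $\mathrm{Nrd}(\mathrm{diag}(a_i)) = \prod_i \mathrm{Nrd}_D(a_i)$. Hence $\deg\det B^{\{d\}} = d\sum_i \deg a_i \in d\ZZ$.

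I expect the main obstacle to be the identification $\det B^{\{d\}} = \mathrm{Nrd}(\tilde B)$ together with the claim that $D_d(t)$ stays a division algebra. This needs care with the splitting field and with the compatibility of reduced norms under scalar extension by $t$. For this I will cite the standard facts used in~\cite{DerksenMakam2018} and check that the constant extension $Z \to Z(t)$ preserves everything. The homogeneity and leading-term computation in the key step is then routine.
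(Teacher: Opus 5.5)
Your proposal is correct and follows essentially the same route as the paper: your $D = D_d\otimes_Z Z(t)$ is (canonically) the paper's $UD_d(\KK(t))$, your leading-coefficient argument giving $\deg \mathrm{Nrd}_D(a) = d\deg a$ is Lemma~\ref{lem:deg_S}, with the reduced norm playing the role of $\det$ of the $d\times d$ block, and your Gaussian-elimination step plays the role of the Bruhat decomposition in Lemma~\ref{lem:degDet_d_M}. (As in the paper, the claim that $D_d$ has degree $d$ needs $m\ge 2$; this is harmless, since padding $B$ with a zero summand leaves $B^{\{d\}}$ unchanged.)
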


We extend 
the proof~\cite{DerksenMakam2018} of the regularity lemma by the generic division algebra.
For a field $\FF$, $d \geq 2$, and $m \geq 2$, 
the generic matrix ring $R_d = R_d(\FF)$ is 
an $\FF$-algebra generated 
by $d \times d$ indeterminate matrices $X_k = (x_{k,ij})$ for $k =1,2,\ldots,m$. 
Let $Z(R_d)$ denote the center of $R_d$ (the set of elements commutative with every element of $R_d$).
The {\em generic division algebra (universal division algebra)} $UD_d = UD_d(\FF)$ is defined as
the ring of the fraction of $R_d$ by $Z(R_d) \setminus \{0\}$. 
Namely, $UD_d$ consists of $F/G$ for $F \in R_d$ and $G \in Z(R_d) \setminus \{0\}$, where addition and multiplication are defined 
in a natural way. 
It is known (see e.g., \cite[Section 3.2]{Rowen1980} and \cite[Chapter B.12-15]{DrenskyFormakek2004}) that $UD_d$ is a skew field.
Further, any element of $Z(R_d)$ is the scalar matrix $f I_d$ for some polynomial $f$ on variables $x_{k,ij}$. 
Therefore, by injective homomorphism $UD_d \ni F/(fI_d) \mapsto F/f \in 
\FF(\{x_{k,ij}\})^{d \times d}$, 
we can regard $UD_d \subseteq \FF(\{x_{k,ij}\})^{d \times d}$; see \cite[Proof of Proposition 2.1]{Kaliuzhnyi-VerbovetskyiVinnikov2012}. 
Suppose that $\FF = \KK(t)$.
Then $S = F/f \in UD_d(\KK(t))$ is written as 
$(1/f) \sum_{-\infty < \alpha \leq \alpha^*} t^{\alpha} F_{\alpha}$, where $F_{\alpha} \in R_d(\KK)$ and $F_{\alpha^*} \neq O$.
Then the degree $\deg S$ of $S$ is defined as $\alpha^* - \deg f \in \ZZ$.
On the other hand, since $S$ is a $d \times d$ matrix 
over $(\KK(t))(\{x_{k,ij}\})$, 
we can also consider $\det S$ and its degree $\deg \det S$.
\begin{Lem}\label{lem:deg_S}
For $S \in UD_d(\KK(t))$, it holds
$\displaystyle \deg S = \frac{1}{d} \deg \det S$.
\end{Lem}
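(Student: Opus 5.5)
We need $\deg S = \frac{1}{d}\deg\det S$ for $S \in UD_d(\KK(t))$. The plan is to prove that $\deg$ is a valuation on the skew field $UD_d(\KK(t))$, and then compare it with the valuation $\frac1d\deg\det$. Both are additive under multiplication.

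First, reduce to $S = F \in R_d(\KK(t))$ with polynomial (in $t$) coefficients. Write $S = F/f$. The scalar $f$ contributes $-\deg f$ to $\deg S$ and $-d\deg f$ to $\deg\det S$, so it suffices to treat $F$. Write $F = \sum_{\alpha\le\alpha^*} t^\alpha F_\alpha$ with $F_{\alpha^*}\neq O$ and $F_\alpha \in R_d(\KK)$. The leading coefficient $F_{\alpha^*}$ is a nonzero element of $R_d(\KK)\subseteq UD_d(\KK)$. Since $UD_d(\KK)$ is a skew field, $F_{\alpha^*}$ is invertible there, so $\det F_{\alpha^*}\neq 0$ as a rational function in the $x_{k,ij}$. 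Indeed, the embedding $UD_d(\KK)\subseteq \KK(\{x_{k,ij}\})^{d\times d}$ is a ring homomorphism, so an inverse in $UD_d$ is a matrix inverse.

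Next, compute $\det F = \det\bigl(t^{\alpha^*}(F_{\alpha^*} + t^{-1}G)\bigr)$, where $G$ has entries polynomial in $t^{-1}$. This gives $\det F = t^{d\alpha^*}\bigl(\det F_{\alpha^*} + (\text{terms of lower degree in } t)\bigr)$. Here $\det F_{\alpha^*}$ is a nonzero element of $\KK(\{x_{k,ij}\})$, independent of $t$. Hence $\deg\det F = d\alpha^*$, where the degree is taken in $t$ over the field $\KK(\{x_{k,ij}\})(t)$, or equivalently after clearing denominators. Therefore $\frac1d\deg\det F = \alpha^* = \deg F$.

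It remains to check that the expression is legitimate. If $S$ is presented as $F/f$ with $F$ having rational coefficients in $t$, we first multiply $F$ and $f$ by a common polynomial in $t$ (a central scalar) so that $F$ is a polynomial in $t$; this changes neither side. Alternatively, we expand in $t^{-1}$ as Laurent series, which is consistent with the paper's notation $\sum_{-\infty<\alpha\le\alpha^*}$. As a byproduct, the argument shows that $\deg S$ is independent of the representation, since $\det S$ is.

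The only real obstacle is ensuring that the leading coefficient $F_{\alpha^*}$ has nonzero determinant. This is where the skew field property of $UD_d(\KK)$ (no zero divisors, hence invertibility inside $\KK(\{x_{k,ij}\})^{d\times d}$) is essential. For $F_{\alpha^*}$ an arbitrary nonzero matrix, the determinant could vanish and the degree would drop.
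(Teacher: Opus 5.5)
Your proof is correct and follows the paper's argument. In both, the nonzero leading coefficient $F_{\alpha^*}\in R_d(\KK)$ is invertible in the skew field $UD_d(\KK)\subseteq \KK(\{x_{k,ij}\})^{d\times d}$, so $\det F_{\alpha^*}\neq 0$ and hence $\deg\det S = d(\alpha^*-\deg f)$; your extra bookkeeping (clearing $t$-denominators, and noting that $\deg S$ does not depend on the chosen representation) only spells out what the paper leaves implicit, and the valuation-comparison plan in your first paragraph is never actually used.
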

\begin{proof}
In the above expression $S=(1/f)\sum_{-\infty < \alpha \leq \alpha^*} t^{\alpha} F_{\alpha}$, 
since $F_{\alpha^*} (\neq O)$ is invertible in 
$UD_d(\KK) \subseteq \KK(\{x_{k,ij}\})^{d \times d}$, 
it must hold $\det F_{\alpha^*} \neq 0$.
This implies $\deg \det S = d (\alpha^* - \deg f) \in d \ZZ$.
\end{proof}
For an $n \times n$ matrix $M$ over $UD_d(\KK(t))$, we can consider 
the Dieudonn\'e determinant of $M$, 
which is denoted by $\Det_{d} M$.  
Since $\deg$ is an abelian valuation on $UD_d(\KK(t))$, 
the degree $\deg \Det_{d} M$ is well-defined.
On the other hand, since $M$ is an $nd \times nd$ matrix over 
$(\KK(t))(\{x_{k,ij}\})$, we can also consider $\det M$ and its degree $\deg \det M$.
\begin{Lem}\label{lem:degDet_d_M}
For $M \in UD_{d}(\KK(t))^{n \times n}$, 
it holds 
$\deg \Det_d M = \displaystyle \frac{1}{d} \deg \det M$.
\end{Lem}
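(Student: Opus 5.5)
We reduce $M$ to triangular form by elementary row and column operations over the skew field $UD_d(\KK(t))$, the standard computation of the Dieudonn\'e determinant, and track both sides of the identity through each step. Concretely, we will use the fact that any $M \in UD_d(\KK(t))^{n \times n}$ can be written as $M = L\, D\, U\, \Pi$. Here $L$ is lower unitriangular, $U$ is upper unitriangular, $\Pi$ is a permutation matrix, and $D = \mathrm{diag}(S_1,\ldots,S_n)$ with $S_i \in UD_d(\KK(t))$. This is Bruhat/Gaussian elimination over a skew field: at each step we choose a nonzero pivot, which is invertible because $UD_d$ is a skew field, and clear its row and column. If no nonzero pivot exists, then $M$ is singular.

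We first dispose of the singular case. If $M$ is singular over $UD_d(\KK(t))$, then some nontrivial combination of its columns with coefficients in $UD_d(\KK(t))$ vanishes. Viewing these coefficients as $d\times d$ blocks over $(\KK(t))(\{x_{k,ij}\})$, one of them is an invertible matrix by Lemma~\ref{lem:deg_S} (a nonzero element has nonzero determinant). Applying the relation to a nonzero column of this block gives a nonzero kernel vector of the $nd \times nd$ matrix $M$. Hence $\det M = 0$, and both sides equal $-\infty$ under the convention $\deg \Det_d M = -\infty$ for singular $M$.

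In the nonsingular case, we compare the two determinants factor by factor. On the Dieudonn\'e side, $\Det_d M = \pm \Det_d D = \pm\, S_1 S_2 \cdots S_n$ modulo commutators. Since $\deg$ is an abelian valuation on $UD_d(\KK(t))$ with $\deg(ST) = \deg S + \deg T$, we get $\deg \Det_d M = \sum_i \deg S_i$. On the commutative side, regard $L, U, \Pi$ as $nd \times nd$ matrices over $(\KK(t))(\{x_{k,ij}\})$:
\begin{itemize}
\item $L$ and $U$ are block unitriangular, so their determinants equal $1$;
\item $\Pi$ is a block permutation built from identity blocks $I_d$, so its determinant is $\pm 1$;
\item $D$ is block diagonal, so $\det D = \prod_i \det S_i$.
\end{itemize}
Hence $\deg \det M = \sum_i \deg \det S_i = d \sum_i \deg S_i$, where the last equality is Lemma~\ref{lem:deg_S}. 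This gives the claimed identity.

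The main obstacle is verifying that $\deg$ is multiplicative on $UD_d(\KK(t))$, that is, that it is a valuation, so that $\deg \Det_d$ is well-defined. The paper asserts this just before the statement. If it needs justification, Lemma~\ref{lem:deg_S} supplies it directly: $\deg S = \frac{1}{d}\deg\det S$, and $\det$ is multiplicative. With that in hand, the remaining work is the routine check that the block interpretation of the elimination matrices is compatible with the inclusion $UD_d \subseteq \KK(t)(\{x_{k,ij}\})^{d\times d}$. This holds because that inclusion is a ring homomorphism.
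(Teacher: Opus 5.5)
Your proof is correct and follows essentially the same route as the paper. Both factor $M$ into unitriangular, permutation and diagonal matrices over $UD_d(\KK(t))$ (the paper uses the Bruhat form $M=LPDU$ instead of your $LDU\Pi$), read the factors as $nd\times nd$ block matrices, and apply Lemma~\ref{lem:deg_S} to the diagonal blocks. Your singular case, via a nonzero kernel vector of the blown-up matrix, is a slightly more careful replacement for the paper's brief remark that some diagonal block $D_{ii}$ vanishes.
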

\begin{proof}
Consider the Bruhat decomposition $M = LPDU$ of $M$, 
where $L$ and $U$ are lower and upper triangular matrices, 
respectively, with ones in diagonals, $P$ is a permutation matrix,  $D$ is a diagonal matrix, and $PD$ is uniquely determined; see \cite[Theorem 9.2.2]{Cohn_Algebra}.
Suppose that $M$ is nonsingular (invertible). Then
the Dieudonn\'e determinant of $M$ is defined as the sign of $P$ 
times the product of the diagonals $D_{ii} (\neq O)$ of $D$ 
modulo the commutator subgroup of $UD_d(\KK(t))\setminus \{0\}$.
The degree $\deg M \in \ZZ$ is given by 
$\deg \prod_{i \in [n]} D_{ii} = \sum_{i \in [n]} \deg D_{ii}$ (independent of the order of multiplication).
View $M = LPDU$ as the product 
of $nd \times nd$ matrices $L, P, D, U$.
Then, $L, U$ are triangular matrices with ones in diagonals 
and $P$ is a permutation matrix, and 
$D$ is a block-diagonal matrix with $d \times d$ diagonal blocks  $D_{ii}$. Then, by Lemma~\ref{lem:deg_S}, it holds $\deg \det M = 
\deg \prod_{i =1}^n \det D_{ii} 
= \sum_{i =1}^n \deg \det D_{ii} = d \sum_{i=1}^n 
\deg D_{ii} = d \deg \Det_d M  \in d\ZZ$. 
If $M$ is singular, then $\deg \Det_d M$ is defined as $-\infty$. 
In this case, some $D_{ii}$ is zero matrix, 
and $\deg \det M = - \infty$.
\end{proof}

\begin{proof}[Proof of Lemma~\ref{lem:regularity_B}]
$B^{\{d\}}$ 
is viewed as an $n \times n$ matrix over $UD_d(\KK(t))$ whose $(i,j)$-element 
is given by $\sum_{k=1}^m (B_k)_{ij}X_k$. By Lemma~\ref{lem:degDet_d_M}, it holds $\deg \det B^{\{d\}} = d \deg \Det_d B^{\{d\}} \in d \ZZ \cup \{-\infty\}$.
\end{proof}

\subsection{Integrality gap}
We show that the integrality gap of ${\cal Q}(A)$ relative to ${\cal P
}(A)$
is at least $1/2$, which generalizes  Theorem~\ref{thm:2-approx_nc}.
\begin{Thm}\label{thm:2-approx_poly}
\begin{equation*}
\frac{1}{2} {\cal Q}(A) \subseteq {\cal P}(A).
\end{equation*}
\end{Thm}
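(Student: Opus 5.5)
By Lemma~\ref{lem:P_subseteq_Q} it suffices to prove $f_{{\cal Q}(A)}(c) \leq 2 f_{{\cal P}(A)}(c)$ for every $c \in \ZZ^m$, that is, $\varDelta_{\rm max}(A[c]) \leq 2\,\delta_{\rm max}(A[c])$. Note that $\delta_{\rm max}(A[c]) \geq 0$ because of the empty minor. I would bound $\varDelta_{\rm max}$ from above by producing a feasible dual solution in Theorem~\ref{thm:degDet_B} (the version with $P,Q \in GL_n(\KK(t)^-)$) whose value is exactly $2\delta_{\rm max}(A[c])$. This is a weighted imitation of the Fortin--Reutenauer argument for Theorem~\ref{thm:2-approx_nc}. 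In the unweighted case one fixes a generic point $\xi$ with $\rk A(\xi) = \rk A = r$ and normalizes $SA(\xi)T = \mathrm{diag}(I_r,O)$. Perturbing $\xi \to \xi + s y$, the $(r+1)$-minors on rows $[r]\cup\{i\}$ and columns $[r]\cup\{j\}$ vanish, and their linear term in $s$ shows that the $(n-r)\times(n-r)$ corner of $SA_kT$ is zero. This gives the bound $2r$.

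\textbf{Step 1 (normal form at a generic point).} After extending $\KK$ if necessary (this does not change the supports), choose $\xi \in \KK^m$ generic. Then every nonzero minor of $A[c]$ keeps its $t$-degree under $x \leftarrow \xi$, so $\delta_{\rm max}(A[c](\xi)) = \delta_{\rm max}(A[c])$. Since $\KK(t)^-$ is a discrete valuation ring, the Smith normal form over it gives $P,Q \in GL_n(\KK(t)^-)$ with $P\,A[c](\xi)\,Q = \mathrm{diag}(t^{a_1},\ldots,t^{a_r},0,\ldots,0)$ and $a_1 \geq \cdots \geq a_r$. Moreover, $\delta_{\rm max}(A[c](\xi)) = \sum_{i \leq r'} a_i$, where $r'$ is the number of positive $a_i$. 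For this I would use Cauchy--Binet: multiplication by $P,Q$ does not raise the degree of any minor, and by invertibility it does not lower the maximum either.

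\textbf{Step 2 (dual solution).} Set $p_i = q_i = a_i$ for $i \leq r'$, and $p_i = q_i = 0$ otherwise. The objective then equals $2\sum_{i\le r'} a_i = 2\delta_{\rm max}(A[c])$. It remains to check $\deg (P A_k t^{c_k} Q)_{ij} \leq p_i + q_j$ for all $i,j,k$. The perturbation argument gives this. Replace $\xi$ by $\xi + s e_k$ with a new scalar $s$. Every minor of $P A[c](\xi + s e_k) Q$ has $t$-degree at most $\delta_{\rm max}$, and this holds coefficientwise in $s$. Now examine the coefficient of $s$ in suitable minors:
\begin{itemize}
\item For $i,j > r'$, use rows $[r']\cup\{i\}$ and columns $[r']\cup\{j\}$. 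The linear coefficient is $t^{\sum_{l\le r'} a_l}\,(PA_kt^{c_k}Q)_{ij}$, which gives $\deg \leq 0$.
\item For $i \leq r' < j$, use rows $[r']$ and columns $([r']\setminus\{i\})\cup\{j\}$. This gives $\deg \leq a_i$.
\item For $i,j \leq r'$, use the diagonal minor $[r']$, or the $1\times 1$ minor itself. This gives $\deg \leq a_i + a_j$. Indeed, the minor on $([r']\setminus\{i\})$ and $([r']\setminus\{j\})$ with an added row and column yields the bound directly.
\end{itemize}

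\textbf{Main obstacle.} I expect the main difficulty to be making the ``linear coefficient'' extraction rigorous. The minors also receive contributions from diagonal entries with $a_l \leq 0$ (indices $r' < l \leq r$), and possibly lower-order cross terms. These must be shown not to cancel the leading term. I would handle this by working with the rescaled matrix $(t^{-p}) P A[c](\xi+se_k) Q (t^{-q})$ and arguing about its reduction modulo $t^{-1}$ as a matrix over $\KK(t)^-$. At the point $s=0$ this reduction is $\mathrm{diag}(I_{r'}, \ast)$, where $\ast$ has entries of degree $\le 0$. A first-order expansion of the relevant minors of this rescaled matrix then isolates the single entry in question. If this becomes messy, the fallback is to perturb one variable at a time and compare degrees via the valuation directly.
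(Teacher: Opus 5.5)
Your overall plan is the paper's own proof. You reduce via Lemma~\ref{lem:P_subseteq_Q} to $\varDelta_{\rm max}(A[c])\le 2\delta_{\rm max}(A[c])$, take the Smith--McMillan form $PA[c](\xi)Q=(t^{a})$ at a generic point, perturb one coordinate $x_k$ by $s$, read off the linear coefficient in $s$ of suitable minors, and plug $p=q=a^+$ into Theorem~\ref{thm:degDet_B}. Write $D:=(t^{a})$, $C:=PA_kt^{c_k}Q$ and $\delta:=\sum_{l\le r'}a_l$. Your off-diagonal cases are correct: each chosen minor has a zero row and a zero column in $D$, so its linear coefficient really is a single entry of $C$ times a monomial.

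The gap is in the diagonal entries $C_{ii}$ with $i\le r$. This is exactly where the paper does its only nontrivial work, and you flag it but do not resolve it. Concretely:
\begin{itemize}
\item Your first bullet is false for $i=j$ with $r'<i\le r$. There $D_{ii}=t^{a_i}\neq 0$, so the linear coefficient of the principal minor on $[r']\cup\{i\}$ is $t^{\delta}C_{ii}+\sum_{l\le r'}C_{ll}t^{\delta-a_l+a_i}$, not a single term.
\item In your third bullet, for $i=j\le r'$, the $1\times 1$ minor only gives $\deg C_{ii}\le\delta$. The principal minor $[r']$ only bounds the sum $\sum_{l\le r'}C_{ll}t^{\delta-a_l}$. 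Neither isolates $C_{ii}$.
\item Your proposed repair is mis-stated. With $p=q=a^+$, the matrix $(t^{-p})D(t^{-q})$ has diagonal entries $t^{-a_l}$ for $l\le r'$, not $1$. So its reduction modulo $t^{-1}$ is not $\mathrm{diag}(I_{r'},\ast)$. Moreover, knowing that the perturbed rescaled matrix lies over $\KK(t)^-$ is precisely what is to be proved.
\end{itemize}

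The missing idea is cheap, and there are two ways to supply it.

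\emph{The paper's route.} Use the size-wise bounds $\delta_h$, which your genericity assumption also provides since every minor keeps its degree. Then prove the stronger bound $\deg C_{ij}\le a_{\min(i,j)}$ by induction on $i$, using the leading principal minors $[i]$. Their linear coefficient is $\sum_{j\le i}C_{jj}t^{\sum_{l\le i}a_l-a_j}$ and has degree at most $\delta_i(D)=\sum_{l\le i}a_l$. By induction the terms with $j<i$ already meet this bound, so the term $j=i$ cannot exceed it.

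\emph{Staying with your global bound $\delta$.} Take differences of two principal minors. For $K\subseteq[r]$ set $g(K):=\sum_{l\in K}C_{ll}t^{-a_l}$. The linear coefficient of the principal minor on $K$ is $t^{\sum_{l\in K}a_l}g(K)$, so $\deg g(K)\le\delta-\sum_{l\in K}a_l$. Since $C_{ii}t^{-a_i}=g(K)-g(K\setminus\{i\})$:
\begin{itemize}
\item taking $K=[r']$ for $i\le r'$ gives $\deg C_{ii}\le 2a_i$;
\item taking $K=[r']\cup\{i\}$ for $r'<i\le r$ gives $\deg C_{ii}\le 0$.
\end{itemize}
These are exactly the bounds your dual solution $p=q=a^+$ needs.
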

By Lemma~\ref{lem:P_subseteq_Q}, 
this result follows from the next theorem with 
$B = A[c]$.
\begin{Thm}\label{thm:2-approx}
For a matrix $B$ in (\ref{eqn:B}), it holds that
\begin{equation*}
\frac{1}{2} \varDelta_{\rm max}(B) \leq  \delta_{\rm max}(B). 
\end{equation*}
\end{Thm}
\paragraph{Proof of Theorem~\ref{thm:2-approx}.}
We adapt  
the algorithmic proof of Theorem~\ref{thm:2-approx_nc} given in~\cite[Section 3]{BlaserJindalPandey2018}. 
For $\ell\in [n]$, define the maximum degree of subdeterminants of $B$ with size~$\ell$:
\begin{equation*}
\delta_\ell(B):= \max_{I,J \subseteq [n]: |I|=|J|=\ell} \deg \det B[I,J].
\end{equation*}
For $\xi \in \KK^m$, 
let $B[\xi] := \sum_{k=1}^m B_k \xi_k \in \KK(t)^{n \times n}$ 
be the matrix obtained from $B$ by substituting $\xi_k$ to $x_k$.
For $k \in [m]$, Let ${\bf 1}_k \in \RR^m$ denote the $k$-th unit vector. 
Recall the Smith-McMillan form for a matrix over $\KK(t)$; see e.g., \cite[Section 5.1.2]{MurotaMatrix}.
\begin{Lem}\label{lem:SM}
Let $|\KK| > n$ and $\xi \in \KK^m$.
Suppose that
\begin{equation}\label{eqn:suppose}
\delta_{\ell} (B[\xi]) \geq \delta_{\ell} (B[\xi+ s {\bf 1}_k]) \quad (\ell \in [n], k \in [m], s \in \KK). 
\end{equation}
If the Smith-McMillan form of $B[\xi]$ is given by
\begin{equation*}
SB[\xi]T = (t^{\alpha}),
\end{equation*}
for $S,T \in GL_n(\KK(t)^-)$ and 
$\alpha \in (\ZZ \cup \{- \infty\})^n$ ordered as
\begin{equation*}
\alpha_1 \geq \alpha_2 \geq \cdots  \geq \alpha_{r} > \alpha_{r+1}= \cdots = \alpha_n = -\infty,
\end{equation*}
then it holds
\begin{equation}\label{eqn:SB_kT}
\deg (SB_kT)_{ij} \leq \alpha_i \geq  \deg (SB_kT)_{ji} \quad (i,j \in [n]:i \leq j).
\end{equation}
\end{Lem}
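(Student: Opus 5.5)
Throughout, set $C := SB[\xi]T = (t^{\alpha})$ and $E := SB_kT$. The matrices $S,T$ lie in $GL_n(\KK(t)^-)$, so for every $\ell$ the maximum degree of $\ell\times\ell$ minors is unchanged by multiplying by them. Hence $\delta_\ell(B[\xi+s{\bf 1}_k]) = \delta_\ell(C+sE)$. Moreover $\delta_\ell(C)=\alpha_1+\cdots+\alpha_\ell$, which equals $-\infty$ for $\ell>r$. So hypothesis (\ref{eqn:suppose}) becomes
\[
\delta_\ell(C+sE)\le \alpha_1+\cdots+\alpha_\ell \quad (\ell\in[n],\ s\in\KK).
\]
We must show $\deg E_{ij}\le\alpha_i$ and $\deg E_{ji}\le\alpha_i$ whenever $i\le j$. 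By symmetry (transpose everything) it suffices to prove $\deg E_{ij}\le \alpha_i$ for $i\le j$. The case $i=j$ is included.

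Argue by contradiction. Suppose $i\le j$ and $\deg E_{ij}>\alpha_i$; this is automatic if $i>r$ and $E_{ij}\neq 0$. Take $\ell=i$. Let $I=\{1,\ldots,i\}$ and $J=\{1,\ldots,i-1,j\}$; if $j=i$ these coincide. Consider the minor $\det (C+sE)[I,J]$ as a polynomial in $s$ with coefficients in $\KK(t)$. We aim to show that for some $s\in\KK$ its degree in $t$ exceeds $\alpha_1+\cdots+\alpha_i$.

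\textbf{Step 1: the $s^1$ coefficient.} Expand the determinant multilinearly in columns or rows. The coefficient of $s^1$ is a sum of terms in which one entry comes from $E$ and the remaining $i-1$ entries come from $C$. Since $C$ is diagonal and $J\setminus\{j\}=\{1,\ldots,i-1\}$, the term that uses $E_{ij}$ together with the diagonal entries $t^{\alpha_1},\ldots,t^{\alpha_{i-1}}$ has degree $\alpha_1+\cdots+\alpha_{i-1}+\deg E_{ij}>\alpha_1+\cdots+\alpha_i$.

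The other contributions are of the form $\pm E_{pq}\cdot(\text{complementary minor of } C)$. Because $C$ is diagonal, the complementary minor is nonzero only if the leftover rows and columns match, i.e. only if we remove row $p$ and column $p$ from the diagonal part. If $j\neq i$, column $j$ has no diagonal entry of $C$ inside $I\times J$, so $E$ must be used in column $j$. Then the remaining $C$-part is the diagonal on $\{1,\ldots,i-1\}$, and row $i$ must be the row used by $E$. So the only term is $E_{ij}\prod_{p<i}t^{\alpha_p}$. If $j=i$, the coefficient is $\sum_{p\le i} E_{pp}\prod_{q\le i,\,q\ne p}t^{\alpha_q}$.

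\textbf{Step 2: the case $j=i$ (the main obstacle).} Here the leading terms of this sum could cancel. To handle it, induct on $i$. Assume $\deg E_{pp}\le\alpha_p$ for $p<i$ has already been established. Then every term with $p<i$ has degree at most $\alpha_1+\cdots+\alpha_i$, while the term $p=i$ has degree strictly larger. So no cancellation occurs.

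To make this valid, the whole argument should be organized as an induction on $i$. For each $i$, first handle the column and row entries with $j>i$, which have no cancellation, and then the diagonal entry $j=i$, using the diagonal entries already bounded for smaller indices.

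\textbf{Step 3: choosing $s$.} The polynomial $\det(C+sE)[I,J]$ has degree at most $i\le n$ in $s$. Its coefficients $g_0,\ldots,g_i\in\KK(t)$ satisfy $\deg g_1>\alpha_1+\cdots+\alpha_i$. Take $D$ to be the maximum of $\deg g_h$ over $h$. Expanding in $t$, the coefficient of $t^{D}$ is a nonzero polynomial in $s$ of degree at most $n$. Since $|\KK|>n$, there is $s\in\KK$ at which it does not vanish. For this $s$,
\[
\deg\det(C+sE)[I,J]=D\ge \deg g_1>\alpha_1+\cdots+\alpha_i,
\]
which contradicts the transformed hypothesis. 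This proves (\ref{eqn:SB_kT}).
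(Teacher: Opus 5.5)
There is a genuine gap for rows $i\ge r+2$. Apart from that, your strategy is the paper's: expand the minor of $C+sE$ as a polynomial in $s$, isolate the $s^1$ coefficient with $I=[i]$, $J=[i-1]\cup\{j\}$, and induct on $i$ for the diagonal entries. Your Step 3 is a correct and more explicit version of the paper's ``since $|\KK|>n$'' step.

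\textbf{The gap.} When $i\ge r+2$, the index $r+1$ lies in $[i-1]$. So your witness term $E_{ij}\prod_{p<i}t^{\alpha_p}$ contains the factor $t^{\alpha_{r+1}}=t^{-\infty}=0$. In fact $C[I,J]$ has only $r<i-1$ nonzero entries, so every term of the $s^1$ coefficient vanishes and $g_1=0$ identically. Your key inequality
\[
\alpha_1+\cdots+\alpha_{i-1}+\deg E_{ij}>\alpha_1+\cdots+\alpha_i
\]
then reads $-\infty>-\infty$, and no contradiction is reached. Your remark that the case $i>r$ is ``automatic'' hides exactly this. Step 2 fails in the same way for $i=j\ge r+2$: every term of $\sum_{p\le i}E_{pp}\prod_{q\ne p}t^{\alpha_q}$ contains a zero diagonal factor. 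So your argument never shows that $E_{ij}=0$ for $i,j\ge r+2$, which is what (\ref{eqn:SB_kT}) requires there.

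\textbf{The fix.} For $i,j>r$, use a minor of size $r+1$ instead: $I=[r]\cup\{i\}$, $J=[r]\cup\{j\}$, $\ell=r+1$. Then the $s^1$ coefficient is $\pm E_{ij}\,t^{\alpha_1+\cdots+\alpha_r}$. When $i=j$, the other diagonal terms contain $t^{\alpha_i}=0$, so the same formula holds. This coefficient is nonzero whenever $E_{ij}\neq 0$, and your Step 3 then contradicts $\delta_{r+1}(C+sE)\le -\infty$. This is the choice the paper makes. With this case handled separately, the rest of your argument goes through. Rows $i\le r+1$ work as you wrote them, including the diagonal induction for $i\le r$, and the transpose symmetry is valid.
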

\begin{proof}
Let $C_{\xi,s} := SB[\xi+ s {\bf 1}_k]T$ and $C_k := SB_kT$.
By $B[\xi+ s {\bf 1}_k] = B[\xi] + s B_k$, it holds
$C_{\xi,s} = (t^{\alpha}) + s C_k.$
Let $I,J \subseteq [n]$ with $|I| = |J| =: h$.
View $\det C_{\xi,s}[I,J]$ as a polynomial on $s$:
\begin{equation*}
\det C_{\xi,s}[I,J] =b_0 + b_1 s + b_2 s^2 + \cdots + b_h s^h,
\end{equation*}
where $b_i = b_i(t)$ is a rational function on $t$.
By (\ref{eqn:suppose}), it holds
\begin{equation*}
 \deg \det C_{\xi,s}[I,J] \leq \delta_h (C_{\xi,s}) = \delta_h (B[\xi+s {\bf 1}_k]) \leq \delta_h (B[\xi])  = \delta_h((t^{\alpha})) = \sum_{\ell=1}^h \alpha_\ell, 
\end{equation*}
where the first and second equalities follow from a basic fact 
that the maximum degree is invariant under 
multiplication by $GL_n(\KK(t)^-)$; see e.g., \cite[Theorem 5.1.5]{MurotaMatrix}.
Since $|\KK| > n \geq h$, it must hold
\begin{equation}\label{eqn:b_i}
\deg b_i \leq \sum_{\ell=1}^h \alpha_\ell \quad (i = 0,1,\ldots,h).
\end{equation}

We show (\ref{eqn:SB_kT}) by focusing on $b_1$ whose (nonzero) term is the product of one entry of $C_k$ and
$h-1$ diagonals of $(t^{\alpha})$.
Suppose that $i,j > r$.
Let $I := [r] \cup \{i\}$, and $J := [r] \cup \{j\}$. 
Then it holds
$
b_1 =  (C_{k})_{ij} t^{\sum_{\ell=1}^r \alpha_\ell}.
$
By (\ref{eqn:b_i}) and $\delta_{h}(B[\xi]) = -\infty$ for $h > r$,  
we have $(C_k)_{ij} = (SB_kT)_{ij} = 0$ and (\ref{eqn:SB_kT}).
Suppose that $i \leq r$ and $i < j$.
Let $I := [i]$ and $J := [i-1] \cup \{j\}$.
Then we have
$
b_1 =  (C_k)_{ij}  t^{\sum_{\ell =1}^{i-1} \alpha_\ell}$.
By (\ref{eqn:b_i}), 
we have $\deg (SB_kT)_{ij} = \deg (C_k)_{ij} \leq \alpha_i$.
The case $i > j$ is similar.
Suppose that $i=j \leq r$.
Let $I = J := [i]$. Then it holds 
$
b_1 = \sum_{j=1}^i  (C_k)_{jj}
t^{\sum_{\ell\in [i] \setminus \{j\}} \alpha_\ell}. 
$
We show by induction on $i$ that $\alpha_i \geq \deg (C_k)_{ii}$. 
The case $i=1$ is clear from (\ref{eqn:b_i}). 
Suppose that $\alpha_i < \deg(C_k)_{ii}$. 
Then the term $(C_k)_{ii} t^{\sum_{\ell=1}^{i-1} \alpha_\ell}$
has degree greater than $\sum_{\ell=1}^i \alpha_\ell$, and 
must cancel out with other terms. 
This is impossible. Indeed, 
by induction, it holds $(C_k)_{jj} \leq \alpha_j$ for $j < i$, and 
any other term has degree at most $\sum_{\ell=1}^i \alpha_\ell$. 
Thus we have (\ref{eqn:SB_kT}) for $i=j$.
\end{proof}

Since $\deg \det$ and $\deg \Det$ are invariant under any field extension of $\KK$ (by (\ref{eqn:degDet_formula1})),
we can assume that $\KK$ is an infinite field. 
Then we can choose ``generic" $\xi \in \KK^m$ satisfying (\ref{eqn:suppose}).
Consider the Smith-McMillan form $SB[\xi]T = (t^{\alpha})$ as in Lemma~\ref{lem:SM}.
Define $\alpha^* \in \ZZ^n_{\geq 0}$ by $\alpha^*_{i} := \max (0, \alpha_i)$. 
By (\ref{eqn:SB_kT}), it holds
$\deg ((t^{-\alpha^*}) SB_kT(t^{-\alpha^*}))_{ij} \leq 0$ for $i,j \in [n], k \in [m]$.
This means that $p = q = \alpha^*, S, T$ satisfies 
the condition of the minimization problem in~(\ref{eqn:A[c]_dual}).
Thus 
$
\Delta_{\rm max}(B) \leq 2 \sum_{\ell=1}^{r^*} \alpha_\ell = 2 \delta_{\rm max}((t^{\alpha})) = 2 \delta_{\rm max}(B[\xi]) \leq  2 \delta_{\max}(B).$

\subsection{Rank-2 skew-symmetric matrices}\label{subsec:rank2-skew-symmetric}
Here we focus on the case where each $A_k$ is a rank-2 skew-symmetric matrix. 
Instead of $A_k$, we consider the 2-dimensional subspace $\ell_k$ 
since $A_k$ is written, up to nonzero multiple, as $u_kv_k^{\top} - v_ku_k^{\top}$
for any basis $u_k,v_k$ of $\ell_k$.
Therefore, Edmonds' problem is also formulated as: 
Given a collection $L = \{\ell_1,\ell_2,\ldots,\ell_m\}$
of 2-dimensional subspaces in $\KK^n$, compute the rank of 
a matrix $A_L$ defined by
\begin{equation}\label{eqn:A_L}
A_L := \sum_{k =1}^m (u_kv_k^{\top} - v_ku_k^{\top}) x_k,
\end{equation}
where $u_k,v_k$ is a basis of $\ell_k$.
As Lov\'asz~\cite{Lovasz89} pointed out, 
Edmonds' problem for $A_L$ is the matroid matching problem 
for the linear matroid represented by $u_1, v_1, u_2, v_2,\ldots,u_m, v_m$.
Here a {\em (matroid) matching} for $L$ is an index subset $I \subseteq [m]$ 
such that $\dim \sum_{k \in I} \ell_k = \sum_{k \in I} \dim \ell_k = 2|I|$ or equivalently $u_k,v_k$ $(k \in I)$ are linearly independent.
\begin{Lem}[\cite{Lovasz89}]
\begin{equation}\label{eqn:matroid_matching}
\rk A_L = 2 \max \{ |I| \mid I: \mbox{matching for $L$}\}. 
\end{equation}
\end{Lem}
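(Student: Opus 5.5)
We prove the two inequalities separately, treating $A_L$ as a skew-symmetric matrix over $\KK(x_1,\ldots,x_m)$. For a skew-symmetric matrix the determinant of a principal submatrix is the square of its Pfaffian. Moreover the rank equals the size of a largest nonsingular principal submatrix, and this size is even. So $\rk A_L = \max\{|S| : S \subseteq [n],\ \mathrm{Pf}\, A_L[S,S] \neq 0\}$, and the task reduces to understanding Pfaffians of $A_L$.

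\emph{Lower bound ($\geq$).} Let $I$ be a matching with $|I| = p$. The $2p$ vectors $u_k, v_k$ $(k \in I)$ are linearly independent, so we may choose a row set $S \subseteq [n]$ with $|S| = 2p$ such that the $2p \times 2p$ matrix $W = [u_k\ v_k]_{k \in I}$ restricted to rows $S$ is nonsingular. Substitute $x_k = 0$ for $k \notin I$ and $x_k = 1$ for $k \in I$. Then $A_L[S,S]$ becomes $W_S J W_S^{\top}$, where $J$ is block diagonal with $2\times 2$ blocks $\begin{pmatrix} 0 & 1 \\ -1 & 0 \end{pmatrix}$. This matrix is nonsingular. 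Hence $\det A_L[S,S] \neq 0$ as a polynomial, and so $\rk A_L \geq 2p$.

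\emph{Upper bound ($\leq$).} We expand the Pfaffian multilinearly. The matrix $A_L[S,S]$ equals $\sum_k x_k (u_k^S \wedge v_k^S)$, where $u_k^S, v_k^S$ denote the restrictions of $u_k, v_k$ to rows $S$. For $|S| = 2p$, the Pfaffian of $\sum_k x_k\, \omega_k$, with each $\omega_k$ a 2-form, is the coefficient of the top form in $(\sum_k x_k \omega_k)^{p}/p!$. Because $\omega_k \wedge \omega_k = 0$ for a decomposable form, this gives
\[
\mathrm{Pf}\, A_L[S,S] = \sum_{I \subseteq [m],\ |I| = p} \Bigl(\prod_{k \in I} x_k\Bigr) \det\bigl[u_k^S\ v_k^S\bigr]_{k \in I},
\]
with appropriate signs. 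If the Pfaffian is nonzero, some term is nonzero. For that $I$, the vectors $u_k^S, v_k^S$ $(k \in I)$ are linearly independent, so $u_k, v_k$ $(k \in I)$ are as well, and $I$ is a matching of size $p = |S|/2$. This yields $\rk A_L \leq 2\max|I|$.

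The main step needing care is the Pfaffian expansion. Its sign bookkeeping is routine, and its correctness follows from the exterior algebra identity $\mathrm{Pf}(M)\, e_1\wedge\cdots\wedge e_{2p} = \omega^{p}/p!$ for $\omega = \sum_{i<j} M_{ij}\, e_i \wedge e_j$. Division by $p!$ is not valid in characteristic $p \leq$ small. To avoid that, we use the divided-power form: the distinct terms in the expansion of $\omega^p$ come in groups of $p!$ identical ones, so the identity can be stated directly over $\ZZ$. Alternatively, the Cauchy–Binet formula for $W_S J W_S^{\top}$ gives $\det A_L[S,S] = \sum$ over pairs of $2p$-subsets of columns. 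From this one can extract a nonvanishing term corresponding to a matching, and this route avoids Pfaffians entirely.
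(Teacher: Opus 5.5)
Your proof is correct, but it does not follow the paper's route. The paper quotes this lemma from Lov\'asz without proof. The closest in-paper argument is the appendix proof of ${\cal P}(A_L) = 2{\rm MP}(L)$, which contains the statement as the case $c = {\bf 1}$, since $\delta_{\rm max}(A_L[{\bf 1}]) = \rk A_L$.

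That appendix argument has three steps:
\begin{itemize}
\item It reduces to a principal minor by the valuated-bimatroid exchange together with $v(I,J)=v(J,I)$. Your fact that the rank of an alternating matrix is attained by a principal minor is the unweighted counterpart.
\item It uses the Pfaffian only qualitatively: $\det B[K,K]$ is a square, so its top-degree monomial has exponent $2{\bf 1}_S$.
\item It extracts the matching by setting $x_k=0$ for $k\notin S$ and applying the Schur-complement identity (\ref{eqn:relation}), i.e.\ $B'[K,K]=N\Omega^{-1}N^{\top}$. This forces $|K|=2s$ and coefficient $(\det N)^2\neq 0$.
\end{itemize}
Read backwards, the same identity gives the lower bound, and that is exactly your factorization $W_SJW_S^{\top}$ with determinant $(\det W_S)^2$.

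Your upper bound rests instead on the explicit expansion $\mathrm{Pf}\,A_L[S,S]=\sum_{|I|=p}\pm\prod_{k\in I}x_k\det[u_k^S\ v_k^S]_{k\in I}$. This is sharper. Distinct $I$ give distinct monomials, so it describes every monomial of every principal Pfaffian. Once one has reduced to principal minors, it also yields the weighted version (entries $x_kt^{c_k}$) with no further work. The price is the characteristic bookkeeping for $\omega^p/p!$. The cleanest fix is to note that both sides are integer polynomials in the indeterminate entries, so the identity over $\QQ$ specializes to every field.

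One caveat: your closing Cauchy--Binet remark is not justified as stated. With all $2m$ columns in $W_S$ and the $x_k$ placed in the blocks of $J$, the expansion also contains mixed terms. In these, a line $\ell_k$ contributes $u_k$ to one determinant factor and $v_k$ to the other. Showing that a nonzero determinant forces a nonzero term indexed by whole pairs essentially needs the Pfaffian, or Pl\"ucker-type relations. Your main argument does not depend on this remark.
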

Oki and Soma~\cite{OkiSoma_SICOMP} showed that 
noncommutative Edmonds' problem for $A_L$ corresponds to 
the fractional matroid matching problem (Vande Vate~\cite{VandeVate92}), 
where the underlying matroid is the infinite matroid of all vectors in $\KK^n$.
A {\em fractional (matroid) matching} here is a vector $u \in \RR^{m}_{\geq 0}$ 
satisfying $\sum_{k=1}^m u_k \dim \ell_k \cap X \leq \dim X$ 
for all vector subspaces $X$ in $\KK^n$.
\begin{Thm}[\cite{OkiSoma_SICOMP}]\label{thm:OkiSoma}
\begin{equation}\label{eqn:fractional_matching}
\ncrk A_L =  2 \max \{ {\bf 1}^{\top} u \mid u: \mbox{fractional matching for $L$} \}.  
\end{equation}
\end{Thm}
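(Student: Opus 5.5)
The plan is to prove the formula by establishing two inequalities, $\ncrk A_L \geq 2\max\{{\bf 1}^\top u\}$ and $\ncrk A_L \leq 2\max\{{\bf 1}^\top u\}$, working from the Fortin--Reutenauer formula (Theorem~\ref{thm:FortinReutenauer}) together with the blow-up characterization (Theorem~\ref{thm:blow-up}).

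\emph{Upper bound} ($\ncrk A_L \leq 2\,\mathrm{LP}$). Take an optimal fractional matching $u$ and compare it with the dual of the fractional matroid matching LP. By Vande Vate's min--max theorem, the optimum equals a minimum, over chains or pairs of subspaces, of a quantity of the form $\dim X + \sum_k \lfloor \tfrac12 \dim(\ell_k/(\ell_k\cap X)) \rfloor$-type terms. Rather than recall this exactly, I would work directly. Given $S,T\in GL_n(\KK)$ and $I,J$ with $(SA_LT)[I,J]=O$, set $U:=$ the span of the rows of $S$ indexed by $I$ and $W:=$ the span of the columns of $T$ indexed by $J$. Then $\dim U^{\perp}=n-|I|$ and $\dim W^{\perp}=n-|J|$, and the vanishing condition says $u_k^\top w=v_k^\top w$-type bilinear forms vanish on $U\times W$.

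From the skew-symmetry of $A_k$, I will show that each $\ell_k$ must satisfy $\dim(\ell_k\cap U^\perp)+\dim(\ell_k\cap W^\perp)\geq 2$. Indeed, a rank-2 form $u_kv_k^\top-v_ku_k^\top$ vanishes on $U\times W$ exactly when either $\ell_k\subseteq U^\perp$, or $\ell_k\subseteq W^\perp$, or both $\ell_k\cap U^\perp$ and $\ell_k\cap W^\perp$ are the same line. Summing the fractional-matching constraints for $X=U^\perp$ and $X=W^\perp$ then gives
\[
2\sum_k u_k \leq \sum_k u_k\bigl(\dim \ell_k\cap U^\perp+\dim\ell_k\cap W^\perp\bigr)\leq 2n-|I|-|J|,
\]
which yields the upper bound after minimizing over $(S,T,I,J)$.

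\emph{Lower bound} ($\ncrk A_L\geq 2\,\mathrm{LP}$). I would exhibit blow-ups of full enough rank. Take a vertex optimal solution of the fractional matching LP; by Vande Vate its entries are half-integral. Then work with the $2$-blow-up $A_L^{\{2\}}$, and show $\rk A_L^{\{2\}}\geq 4\,{\bf 1}^\top u$ by constructing a nonvanishing subdeterminant. For $k$ with $u_k=1$, use a generic scalar $X_k$ (a matching part). For $u_k=\tfrac12$, the components are odd cycles of lines; on each such cycle of length $c$, spanning a space of dimension $c$, the $2\times2$ generic blocks give a $2c\times 2c$ nonsingular block, analogous to half-integral odd-cycle perfect fractional matchings in graphs. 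Theorem~\ref{thm:blow-up} then gives $\ncrk A_L\geq \tfrac12\rk A_L^{\{2\}}$.

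The main obstacle is this lower-bound construction. It requires both the half-integral vertex structure, which is nontrivial for the infinite matroid of all vectors, and a determinant nonvanishing argument for the odd-cycle blocks. If this becomes too hard, I would instead try to derive the lower bound from LP duality alone: show that some minimizer $(S,T,I,J)$ of~(\ref{eqn:formula1}) comes from a pair $X=U^\perp,W^\perp$ that is tight for Vande Vate's dual, so that the two quantities coincide.
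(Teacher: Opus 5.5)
There is a genuine gap: you never prove $\ncrk A_L\le 2\nu^*$, where $\nu^*:=\max\{\mathbf{1}^\top u \mid u \text{ fractional matching for } L\}$.

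Your first paragraph is labelled as this upper bound, and its computation is correct. In particular, your description of when $u_kv_k^\top-v_ku_k^\top$ vanishes on $U\times W$ is right. But what it establishes is $2\,\mathbf{1}^\top u\le 2n-|I|-|J|$ for \emph{every} fractional matching $u$ and \emph{every} feasible $(S,T,I,J)$ in Theorem~\ref{thm:FortinReutenauer}. Minimizing over $(S,T,I,J)$ gives $2\nu^*\le\ncrk A_L$. That is a lower bound on the nc-rank, i.e.\ weak LP duality. Your second paragraph aims at the very same inequality $\ncrk A_L\ge 2\nu^*$ through the $2$-blow-up, so it adds nothing. The steps you flag as hard there (half-integrality of vertices, nonsingularity of the odd-cycle blocks) are not needed and can be dropped.

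The missing direction is the actual content of the Oki--Soma theorem, which the paper cites rather than proves. Combine Theorem~\ref{thm:FortinReutenauer}, your case analysis, and uncrossing $(X,Y)\mapsto(X\cap Y,X+Y)$. For nested $X\subseteq Y$, the covering inequality below already forces your ``same line'' condition. One gets
\[
\ncrk A_L=\min\{\dim X+\dim Y \mid X\subseteq Y\subseteq \KK^n,\ \dim(\ell_k\cap X)+\dim(\ell_k\cap Y)\ge 2\ (k\in[m])\}.
\]
Each such pair is precisely the dual solution $\lambda_X=\lambda_Y=\tfrac12$ of the LP defining $\nu^*$. So the theorem is equivalent to saying that this LP dual has an optimal solution of a very special form, namely a half-weighted nested pair.

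That is a genuine min--max theorem of Vande Vate / Chang--Llewellyn--Vande Vate type. Moreover, it is needed for constraints ranging over all subspaces of $\KK^n$. As the paper's Remark stresses, that is a different relaxation from the one over flats of the finite matroid represented by the $u_k,v_k$. It does not follow from ``LP duality alone'' as your fallback sentence suggests.

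To close the gap you must do one of two things. Either prove (or correctly cite) such a min--max theorem for this LP. Or, starting from a minimizer $(S,T,I,J)$ of Theorem~\ref{thm:FortinReutenauer}, construct a fractional matching of value $\tfrac12(2n-|I|-|J|)$.
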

They also showed that the nc-rank of $A_L$ is attained by the second blow-up.
\begin{Thm}[\cite{OkiSoma_SICOMP}]\label{thm:2nd_blowup_OkiSoma}
\begin{equation*}
\ncrank A_L =  \frac{1}{2} \rank A_L^{\{2\}}.  
\end{equation*}
\end{Thm}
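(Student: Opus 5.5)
The inequality $\ncrk A_L \geq \frac{1}{2}\rk A_L^{\{2\}}$ is immediate from Theorem~\ref{thm:blow-up}. So the plan is to prove the reverse, $\rk A_L^{\{2\}} \geq 2\ncrk A_L$. By Theorem~\ref{thm:OkiSoma} this amounts to $\rk A_L^{\{2\}} \geq 4\,{\bf 1}^{\top}u$ for an optimal fractional matching $u$. The strategy is to specialize the $2\times 2$ indeterminate matrices $X_k$ to symmetric matrices of rank at most two. This turns $A_L^{\{2\}}$ into a matrix of the form (\ref{eqn:A_L}) for a new line family in $\KK^n\otimes\KK^2=\KK^{2n}$, whose rank (\ref{eqn:matroid_matching}) gives a lower bound. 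We may enlarge $\KK$ to be infinite, since ranks are unchanged.

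\emph{Step 1: half-integrality.} I will use (or reprove via Vande Vate's polyhedral description) the fact that the fractional matroid matching LP for a linear matroid has a half-integral optimum. Writing $w := 2u \in \{0,1,2\}^m$, the constraint then reads
\begin{equation*}
\sum_k w_k \dim(\ell_k\cap X) \leq 2\dim X \quad \mbox{for all subspaces } X\subseteq\KK^n .
\end{equation*}

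\emph{Step 2: the specialization.} Using $u_kv_k^{\top}-v_ku_k^{\top}$ and $X_k=aa^{\top}$, one checks that
\begin{equation*}
(u_kv_k^{\top}-v_ku_k^{\top})\otimes aa^{\top} = (u_k\otimes a)(v_k\otimes a)^{\top}-(v_k\otimes a)(u_k\otimes a)^{\top}.
\end{equation*}
This is rank-2 skew-symmetric with line $\ell_k\otimes a$. So I set $X_k=0$ if $w_k=0$, $X_k=a_ka_k^{\top}$ if $w_k=1$, and $X_k=a_ka_k^{\top}+b_kb_k^{\top}$ if $w_k=2$, where $a_k,b_k\in\KK^2$ are generic and new independent indeterminates $y$ are attached to each summand. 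Every such substitution is a specialization of the generic $X_k$, so $\rk A_L^{\{2\}}$ is at least the rank of the resulting matrix $A_{L'}$. Here $L'$ is the family of $\sum_k w_k$ lines $\ell_k\otimes a_k$ (and $\ell_k\otimes b_k$). By (\ref{eqn:matroid_matching}) it then suffices to show that $L'$ itself is a matching, i.e.\ that these lines span a space of dimension $2\sum_k w_k=4\,{\bf 1}^{\top}u$.

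\emph{Step 3 (main obstacle): genericity.} We must show that for generic $a_k,b_k$ the lines in $L'$ are independent, given only the inequality of Step 1. I would first try Lov\'asz's min--max formula for matroid matching applied to $L'$, or equivalently a Rado/Edmonds-type argument. Suppose the lines are dependent for generic choices. Then some subspace $Y\subseteq\KK^{2n}$ is overloaded by a critical subfamily. Averaging over the genericity (for instance, using a $GL_2$-invariant choice of $Y$ or taking its image under projections $\KK^n\otimes\KK^2\to\KK^n$ along $a^{\perp}$) should produce a subspace $X\subseteq\KK^n$ violating $\sum_k w_k\dim(\ell_k\cap X)\le 2\dim X$. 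The delicate part is controlling the odd components of Lov\'asz's formula when a line contributes only one dimension to a flat. This is exactly where $w_k\le 2$ and the factor $2$ on $\dim X$ must match up.

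If this direct route stalls, I would fall back on an inductive argument. Greedily choose $a_k$ one line at a time and maintain a tight-set invariant analogous to the augmenting-path proof of matroid intersection.
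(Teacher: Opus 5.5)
Your Steps~1 and~2 are sound. The identity $(u_kv_k^{\top}-v_ku_k^{\top})\otimes aa^{\top}=(u_k\otimes a)(v_k\otimes a)^{\top}-(v_k\otimes a)(u_k\otimes a)^{\top}$ holds. The substitution $X_k\leftarrow y_ka_ka_k^{\top}+y'_kb_kb_k^{\top}$ cannot increase $\rk A_L^{\{2\}}$. By (\ref{eqn:matroid_matching}), it then suffices that the $\sum_k w_k$ lines of $L'$ are independent.

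However, Step~3 is the entire content of the theorem, and you do not prove it; you only list strategies. The averaging idea does not work as stated. An overloaded subspace $Y\subseteq\KK^n\otimes\KK^2$ in Lov\'asz's min--max formula for $L'$ depends on the particular $a_k,b_k$, so it need not be $GL_2$-stable or of the form $X\otimes\KK^2$. Nothing in your sketch turns it into a violated inequality $\sum_k w_k\dim(\ell_k\cap X)\le 2\dim X$. Moreover, you use only feasibility of $u$, so you are implicitly claiming a Rado-type theorem for generic tensor products of lines at \emph{every} half-integral feasible point. That is more than you need, and it is not what the natural argument gives.

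The missing ingredient is structural. Take $u$ to be an \emph{extreme} optimal point and use the Chang--Llewellyn--Vande Vate flag ${\cal X}$ of Theorem~\ref{thm:CLV_characterization}. Put $r:=2{\bf 1}^{\top}u=\sum_k w_k$. Conditions (2-0) and (2-1) say that every $\ell_k$ with $w_k>0$ lies in $X_r$, and that
\begin{equation*}
\sum_{k=1}^m w_k\bigl(\dim(\ell_k\cap X_i)-\dim(\ell_k\cap X_{i-1})\bigr)=2 \quad (i\in[r]).
\end{equation*}
In graph terms, every vertex $i\in[r]$ has $w$-weighted degree exactly $2$ in the loopless multigraph $\{e_k^{\cal X}\}$. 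For $e_k^{\cal X}=\{i<j\}$, choose $u_k\in X_i$ and $v_k\in X_j$ as in the proof of Lemma~\ref{lem:flag_matching}. Order the $2r$ vectors $x\otimes c$ spanning the lines of $L'$ by the leading index of $x$. With respect to a basis of $X_r\otimes\KK^2$ adapted to ${\cal X}$, they form a block upper-triangular $2r\times 2r$ matrix. Its $i$-th diagonal block is $[\,x_ic\ \ x'_ic'\,]$ with $x_i,x'_i\neq 0$, and $\{c,c'\}$ is either $\{a_k,a_{k'}\}$ with $k\neq k'$ or $\{a_k,b_k\}$. Each such block is nonsingular for generic choices. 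Hence $L'$ is a matching and $\rk A_L^{\{2\}}\ge 2r=2\ncrk A_L$.

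Extremality is genuinely needed for this argument. Take four pairwise skew lines in $\KK^4$: $u=\frac12{\bf 1}$ is an optimal fractional matching, but no point lies on two of the lines. So no flag has a first step of weight $2$, and the block-triangular argument cannot start. The paper itself only cites this theorem from \cite{OkiSoma_SICOMP}, but the tools it states later are exactly what closes your Step~3.
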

We consider these results in our polyhedral setting.
The {\em matroid matching polytope} ${\rm MP}(L)$ 
is defined as the convex hull of the characteristic vectors 
${\bf 1}_{I}:= \sum_{k \in I}{\bf 1}_k$ 
over all matchings $I$ for $L$.
Then (\ref{eqn:matroid_matching}) is linear optimization
on ${\rm MP}(L)$ with objective vector~${\bf 1}$. 
As is expected, ${\rm MP}(L)$ equals the half of ${\cal P}(A_L)$. 
\begin{Lem}\label{lem:P(A_L)=2M(L)}
\[
{\cal P}(A_{L}) = 2 {\rm MP}(L).
\]
\end{Lem}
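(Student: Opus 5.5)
Both inclusions will be proved through the support function: by Lemma~\ref{lem:P_subseteq_Q}, used in both directions, it suffices to show $f_{{\cal P}(A_L)}(c) = 2 f_{{\rm MP}(L)}(c)$ for every $c \in \ZZ^m$. The plan is instead to identify the vertex sets directly. We will show that every $u \in \supp \det A_L[I,J]$ satisfies $u \in 2\,{\rm MP}(L)$, and that every $2\cdot{\bf 1}_I$ for a matching $I$ lies in $\supp \det A_L[I',J']$ for some $I',J'$.

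\textbf{Step 1 ($2{\rm MP}(L) \subseteq {\cal P}(A_L)$).} Let $I$ be a matching. Consider the polynomial $\det A_{L_I}$ obtained from $A_L$ by setting $x_k = 0$ for $k \notin I$. Choose a basis of $\KK^n$ extending the vectors $u_k,v_k$ $(k\in I)$, and let $S$ be the change-of-basis matrix. Then $S A_{L_I} S^{\top}$ is block diagonal, with $2\times 2$ blocks $x_k\begin{pmatrix}0&1\\-1&0\end{pmatrix}$ for $k \in I$ and a zero block elsewhere. Hence its rank is $2|I|$, and its principal $2|I|\times 2|I|$ minor is $\prod_{k\in I} x_k^2$. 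By the Cauchy--Binet formula, the $2|I|$-minors of $A_{L_I}$ span the same space of polynomials up to the invertible transformation, and every minor of $A_{L_I}$ is a polynomial in $x_k$ $(k\in I)$ only. We then argue directly as follows. The sum of the $2|I|$-minors of $A_{L_I}$, weighted by the minors of $S$ and $S^{\top}$, equals $\prod_{k \in I} x_k^2$. So some minor $\det A_{L_I}[I',J']$ contains the monomial $\prod x_k^2$. That monomial uses only the variables $x_k$ with $k\in I$, and setting the other variables to zero does not create or cancel monomials in which they do not appear. Therefore $\prod_{k\in I} x_k^2 \in \supp \det A_L[I',J']$, and $2\cdot{\bf 1}_I \in {\cal P}(A_L)$.

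\textbf{Step 2 (${\cal P}(A_L) \subseteq 2{\rm MP}(L)$).} Take a monomial $x^u$ in $\supp\det A_L[I',J']$, and let $K = \supp u$. Setting $x_k=0$ for $k\notin K$ keeps this monomial, so $\det A_{L_K}[I',J'] \neq 0$. The vector $u$ has $\sum_k u_k = |I'|$, and each $u_k \le 2$ because $A_k$ has rank $2$: the coefficient of a monomial of degree $u_k$ in $x_k$ involves $u_k\times u_k$ minors of $A_k$ via multilinear expansion. We claim that $u$ lies in $2{\rm MP}(L)$. Since $2\,{\rm MP}(L)$ is a polytope, it suffices to show $c^\top u \le 2\max_{I \text{ matching}} c({I})$ for all $c$, with $c(I) := \sum_{k\in I} c_k$. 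Equivalently, we will show that $\deg \det (A_L[c])[I',J'] \le 2\max_I c(I)$. For this we invoke (\ref{eqn:weighted_max}) together with Lov\'asz's formula (\ref{eqn:matroid_matching}) applied to the weighted matrix.

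\textbf{The main obstacle.} The key point is the weighted analogue of (\ref{eqn:matroid_matching}): the maximum $t$-degree of a subdeterminant of $A_L[c]$ is $2\max_I c(I)$. For this I would apply (\ref{eqn:matroid_matching}) to the subspaces $\ell_k$ over the field $\KK(t)$, with skew-symmetric matrices $A_k t^{c_k}$. For a square skew-symmetric matrix the maximum-degree subdeterminant can be taken principal, equal to a Pfaffian squared. The Pfaffian of a principal submatrix of a sum of rank-2 skew forms expands as a sum over sets $I$ of size half the order, with terms $\prod_{k\in I} x_k t^{c_k}$ times a coefficient. That coefficient is nonzero only if $I$ is a matching restricted to that coordinate set. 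So the degree is at most $2\max_I c(I)$, taking nonnegative $c$ or including the empty set. Non-principal minors are bounded by Cauchy--Binet through principal ones, using $\det M[I',J']^2 \preceq$ (in degree) $\det M[I',I']\det M[J',J']$-type arguments for skew-symmetric $M$. I expect this step to require the most care.
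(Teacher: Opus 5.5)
\textbf{There is a genuine gap, in Step 2.} The parts you handle explicitly are sound. Step 1 (congruence of $A_{L_I}$ to $\bigoplus_{k\in I}x_k\left(\begin{smallmatrix}0&1\\-1&0\end{smallmatrix}\right)\oplus O$, then Cauchy--Binet, then lifting the monomial $\prod_{k\in I}x_k^2$ back to $A_L$) is correct. It is a valid substitute for the paper's bordered identity (\ref{eqn:relation}). Your Pfaffian expansion for principal minors is also correct: $\mathrm{Pf}\,(A_L[c])[K,K]=\sum_{|I|=|K|/2}\pm\det N_I\prod_{k\in I}x_kt^{c_k}$, and $\det N_I\neq 0$ forces $I$ to be a matching. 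Hence $\deg\det(A_L[c])[K,K]\le 2\max_I c(I)$. Invoking (\ref{eqn:matroid_matching}) over $\KK(t)$ adds nothing here, since it only sees rank, not $t$-degree.

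The gap is the passage from an arbitrary pair $(I',J')$ to principal minors. Step 2 needs this, because your $u$ comes from an arbitrary $\supp\det A_L[I',J']$. The inequality you propose, $2\deg\det M[I',J']\le\deg\det M[I',I']+\deg\det M[J',J']$ for skew-symmetric $M$, is false. Take $M=\left(\begin{smallmatrix}0&x_1t^{c_1}\\-x_1t^{c_1}&0\end{smallmatrix}\right)$, $I'=\{1\}$, $J'=\{2\}$: the left side is $2c_1$, but both principal $1\times 1$ minors vanish. More generally, when $|I'|$ is odd the principal minors on $I'$ and $J'$ are identically zero while $\det M[I',J']$ need not be. So no bound using only principal minors on the same index sets can work; you must move to other index sets.

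What is true, and what the paper proves, is the global statement $\delta_{\rm max}(A_L[c])=\max_K\deg\det(A_L[c])[K,K]$. The paper's argument runs as follows.
\begin{itemize}
\item $v(I,J)=\deg\det B[I,J]$ is a valuated bimatroid, and skew-symmetry gives $v(I,J)=v(J,I)$.
\item Take a maximizer $(I,J)$ with $|I\cap J|$ maximal. If $I\neq J$, apply the exchange axiom to $(I,J)$ and $(J,I)$ with $j\in J\setminus I$.
\item The two values on the right-hand side sum to at least $2\delta_{\rm max}$ and each is at most $\delta_{\rm max}$. Hence $(I\cup\{j\},J\cup\{i\})$ or $(I\cup\{j\}\setminus\{i\},J)$ is again a maximizer, with strictly larger intersection, a contradiction.
\end{itemize}
With this claim inserted, your Step 2 closes: $c^{\top}u\le\delta_{\rm max}(A_L[c])=\deg\det(A_L[c])[K,K]\le 2\max_I c(I)$ for all $c\in\ZZ^m$. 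Lemma~\ref{lem:P_subseteq_Q} then gives $u\in 2{\rm MP}(L)$.
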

This fact appears in \cite{HIOS2025}.
For completeness, we give a proof in the appendix.
In particular, $c \mapsto \deg \det A_L [c]$ 
is weighted maximization on (the support function for) ${\rm MP}(L)$.
Extending Theorem~\ref{thm:OkiSoma}, 
the paper~\cite{HIOS2025} showed the corresponding relationship between $c \mapsto \deg \Det A_L [c]$ and the {\em fractional matroid matching polytope} ${\rm FMP}(L)$---the polytope consisting of all fractional matroid matchings.
\begin{Thm}[\cite{HIOS2025}]
\[
{\cal Q}(A_L) = 2 {\rm FMP}(L).
\]
\end{Thm}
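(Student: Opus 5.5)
The plan is to prove the two inclusions separately, working with the generators of ${\cal Q}(A_L)$ for one direction and with the vertices of ${\rm FMP}(L)$ for the other.

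\emph{Inclusion ${\cal Q}(A_L) \subseteq 2{\rm FMP}(L)$.} It suffices to show that $u := \frac{1}{2}\proj_d(z)$ is a fractional matching for every $d$, every $I,J$ with $|I|=|J|$, and every $z \in \supp \det A_L[I,J]^{\{d\}}$.

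1. Fix a subspace $X$ with $\dim X = s$. Choose $S \in GL_n(\KK)$ so that the first $s$ coordinates of the new basis span $X$, and replace each $A_k$ by $SA_kS^{\top}$. By Cauchy--Binet, every minor of $(SA_LS^{\top})^{\{d\}}$ is a $\KK$-linear combination of minors of $A_L^{\{d\}}$ of the same size, and conversely. Hence the union over all $I,J$ of the supports is unchanged, so we may assume $X$ is a coordinate subspace.
2. Read off the shape of $A_k = u_kv_k^{\top}-v_ku_k^{\top}$ in these coordinates. If $\ell_k \subseteq X$, then $A_k$ is supported in the $X\times X$ block. If $\dim \ell_k\cap X = 1$, take $u_k \in X$; then every nonzero entry of $A_k$ lies in an $X$-row or an $X$-column.
3. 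A monomial of $\det A_L[I,J]^{\{d\}}$ comes from a partial permutation of $nd\times nd$ entries, with $z_k$ of them taken from blocks of $A_k$. Such a partial permutation has at most $ds$ entries in $X$-rows and at most $ds$ entries in $X$-columns.
4. Count: an entry from $A_k$ with $\dim\ell_k\cap X=2$ lies in both an $X$-row and an $X$-column, and one with $\dim\ell_k\cap X = 1$ lies in at least one of them. This gives
\[
\sum_k z_k \dim(\ell_k\cap X) \le 2ds, \qquad \text{i.e.}\qquad \sum_k u_k\dim(\ell_k\cap X)\le \dim X,
\]
where $\sum_k z_k$ is taken summing over all $i,j$. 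This is the inclusion.

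\emph{Inclusion $2{\rm FMP}(L) \subseteq {\cal Q}(A_L)$.} It suffices to show that $2u \in {\cal Q}(A_L)$ for every vertex $u$ of ${\rm FMP}(L)$.

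1. I would invoke the structure theory of fractional matroid matching (Vande Vate; Gijswijt--Pap): vertices of ${\rm FMP}(L)$ are half-integral. Concretely, the support of a vertex splits into an honest matching $M$ with value $1$ and disjoint ``odd cycle'' families $C_1,\dots,C_q$ with value $1/2$. Each $C_j$ consists of an odd number $2p_j+1$ of lines spanning a space of dimension $2p_j+1$, in general position within it, and these pieces are independent of each other and of $M$.
2. I would then realize $2u$ inside ${\cal S}^2_\ell$, that is, via the second blow-up, which suffices.
3. The target is a monomial of a suitable minor of $A_L^{\{2\}}$ whose projection is $2u$. Equivalently, one needs a minor of $\sum_k A_k\otimes X_k$ of size $2(2|M| + \sum_j(2p_j+1))$ in which each line of $M$ contributes total degree $4$ and each line of $C_j$ total degree $2$, with nonvanishing coefficient.
4. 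By independence, the problem block-decomposes. The matching lines are handled by $d=1$ and Lemma~\ref{lem:P(A_L)=2M(L)}, tensored trivially. What remains is a single odd cycle: $2p+1$ lines in general position in $\KK^{2p+1}$, for which $\sum_k A_k\otimes X_k$ must have a nonzero monomial of full degree $2(2p+1)$ using each $X_k$ exactly twice.
5. For this I would choose coordinates in which the lines are $\langle e_i,e_{i+1}\rangle$ cyclically, which is possible up to genericity. I would then exhibit an explicit permutation of the $(4p+2)\times(4p+2)$ matrix that walks around the cycle once in each of the two ``layers'' of the $2\times2$ blow-up. Finally I would argue that this monomial cannot cancel, by tracking which distinct variables $x_{k,ij}$ it uses.

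\emph{Main obstacle.} I expect the odd-cycle realization (step 5 of the second inclusion) to be the main difficulty, specifically the non-cancellation argument. Relying on the half-integral vertex description is also a nontrivial import. If it fails, the fallback is to compare support functions directly: match the weighted dual (\ref{eqn:A[c]_dual}) for $A_L[c]$ against the min-max theorem for weighted fractional matroid matching.
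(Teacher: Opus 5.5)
\paragraph{Assessment.} Your first inclusion is sound, but the second inclusion rests on a vertex structure for ${\rm FMP}(L)$ that is false. The fallback you mention at the end closes the gap.

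\paragraph{First inclusion.} The inclusion ${\cal Q}(A_L)\subseteq 2{\rm FMP}(L)$ is correct: it is a direct, monomial-level form of weak duality. Step~1 needs one repair. For $d\ge 2$ the union of supports of \emph{block} minors is not invariant under $A_k\mapsto SA_kS^{\top}$. For example, with $A={\rm diag}(x_1,x_2)$ and a unitriangular $S$, the $(1,1)$ block becomes $X_1+X_2$, whose determinant has mixed monomials that no block minor of $A^{\{2\}}$ has. You only need one direction, though. By Cauchy--Binet, every monomial of a block minor of $A_L^{\{d\}}$ is a monomial of some arbitrary minor of $(SA_LS^{\top})^{\{d\}}$, and your row/column count in steps 3--4 holds verbatim for arbitrary minors.

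\paragraph{The gap in the second inclusion.} The structure you import in step~1 does not hold. The Chang--Llewellyn--Vande Vate description (Theorem~\ref{thm:CLV_characterization}) produces odd cycles only in the auxiliary graph on \emph{flag indices}. It does not make the spans of different cycles independent, nor must a cycle of $2p+1$ lines span a $(2p+1)$-space. Here is a counterexample in $\KK^6$ with the coordinate flag:
\begin{itemize}
\item take the lines $\langle e_1,e_3\rangle$, $\langle e_2+e_3,e_5\rangle$, $\langle e_1,e_4+e_5\rangle$, whose index pairs are $13,35,15$;
\item and the lines $\langle e_2,e_4\rangle$, $\langle e_4,e_6\rangle$, $\langle e_2,e_1+e_6\rangle$, whose index pairs are $24,46,26$.
\end{itemize}
The vector $u=\frac12{\bf 1}$ is a vertex of ${\rm FMP}(L)$:
\begin{itemize}
\item It is feasible. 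Writing $J(Y)$ for the jump indices of a subspace $Y$ along the flag, one has $\dim(\ell_k\cap X)\le |e_k\cap J(X)|$, and every index has degree $2$.
\item The six flag constraints are tight, and their successive differences are the rows of the nonsingular incidence matrix of two disjoint triangles, so they are linearly independent.
\end{itemize}
Yet the first three lines span a $5$-dimensional space, and no three of the six lines lie in a common $3$-space. So the block decomposition of step~4 does not exist. Separately, ``general position in $\KK^{2p+1}$'' is incompatible with the normal form $\langle e_i,e_{i+1}\rangle$ for $p\ge2$. And the non-cancellation argument, which is where all the content lies, is not supplied.

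\paragraph{How to close it.} Your fallback works and needs no vertex description at all. By Lemma~\ref{lem:P_subseteq_Q}, it suffices to show $c^{\top}u\le\frac12({\bf 1}^{\top}p+{\bf 1}^{\top}q)$ for every fractional matching $u$ and every feasible $(p,q,S,T)$ in (\ref{eqn:A[c]_dual}).
\begin{itemize}
\item Feasibility says exactly that $p_i+q_j\ge c_k$ whenever row $i$ of $S$ and column $j$ of $T$ restrict to linearly independent functionals on $\ell_k$.
\item Permute so that $p$ is nonincreasing. Let $X_r$ be the common kernel of rows $r+1,\dots,n$ of $S$, and set $\lambda_r:=p_r-p_{r+1}\ge 0$ with $p_{n+1}:=0$.
\item Then $\sum_r\lambda_r\dim X_r={\bf 1}^{\top}p$ and $\sum_r\lambda_r\dim(\ell_k\cap X_r)=p_{a_k}+p_{b_k}$, where $a_k<b_k$ are the two jump indices of $\ell_k$. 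Rows $a_k$ and $b_k$ of $S$ restrict to independent functionals on $\ell_k$.
\item Do the same for $T$ and $q$, giving jump indices $a'_k<b'_k$.
\item Split into two cases according to whether row $b_k$ of $S$ and column $b'_k$ of $T$ restrict to parallel functionals on $\ell_k$. In either case $c_k\le\frac12(p_{a_k}+p_{b_k}+q_{a'_k}+q_{b'_k})$.
\item Multiply by $u_k\ge 0$, sum over $k$, and apply the fractional matching inequalities to both flags.
\end{itemize}
This is the flag-based correspondence between (\ref{eqn:A[c]_dual}) and the LP dual of ${\rm FMP}(L)$ that the paper imports from \cite{HIOS2025}. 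Together with your first inclusion, it proves the theorem.
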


We present two results on ${\cal Q}(A_L)$.
The first one is an expected generalization of Theorem~\ref{thm:2nd_blowup_OkiSoma}.
\begin{Thm}\label{thm:2nd_blowup}
\[
{\cal P}^{2}(A_L) = {\cal P}^{\leq 2}(A_L) = {\cal Q}(A_L).
\]
\end{Thm}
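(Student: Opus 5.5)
We compare support functions. The inclusions ${\cal P}^{2}(A_L) \subseteq {\cal P}^{\leq 2}(A_L) \subseteq {\cal Q}(A_L)$ hold by definition. By Lemma~\ref{lem:P_subseteq_Q} (with $\alpha=1$), it therefore suffices to show, for every $c \in \ZZ^m$,
\[
f_{{\cal Q}(A_L)}(c) = \varDelta_{\rm max}(A_L[c]) \leq f_{{\cal P}^2(A_L)}(c) = \frac{1}{2}\max_{I,J} \deg \det (A_L[c])[I,J]^{\{2\}} .
\]
Since ${\cal Q}(A_L) = 2{\rm FMP}(L)$, the left-hand side equals $2\max\{c^\top u \mid u \in {\rm FMP}(L)\}$. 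The goal is to realize a maximizing vertex $u$ of ${\rm FMP}(L)$ by a single monomial of a subdeterminant of the $2$-blow-up whose projection is $\proj_2(z) = 2u$.

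The first step is the structure of vertices of ${\rm FMP}(L)$. I would use the known description of extreme points of fractional matroid matching polytopes (Vande Vate~\cite{VandeVate92}, also used by Oki and Soma~\cite{OkiSoma_SICOMP}): every vertex is half-integral, i.e.\ $u \in \{0,1/2,1\}^m$. Moreover, the lines with $u_k=1/2$ decompose into ``odd cycle''-like blocks. Each block $C$ spans a space of dimension $|C|$, and every $|C|-1$ of its lines form a matching structure, in analogy with odd cycles in the graph matching polytope. The lines with $u_k = 1$ form a matching $M$ whose span meets the span of the half-blocks only trivially, so the total dimension is $2|M| + \sum_C |C| = 2\,{\bf 1}^\top u$. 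If this structural fact is not directly available, I would derive it by taking an optimal dual $(p,q,S,T)$ from (\ref{eqn:A[c]_dual}) and a complementary-slackness argument in the style of Oki--Soma's proof of Theorem~\ref{thm:2nd_blowup_OkiSoma}, run weighted.

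The second step constructs the monomial. For a matching $M$, the block $A_L[\cdot]$ restricted to the lines of $M$ gives a nonzero subdeterminant of size $2|M|$ in $A_L$, with monomial $\prod_{k\in M} x_k^2$. In the blow-up this becomes $\prod_{k \in M} \det X_k^{2}$, whose projection is $2{\bf 1}_M$. For a half-block $C$ of odd size spanning dimension $|C|$, I would mimic Oki--Soma's $2 \times 2$ construction. The blow-up $\sum_{k \in C} (u_kv_k^\top - v_ku_k^\top)\otimes X_k$, restricted to the span of dimension $|C|$ (so a $2|C| \times 2|C|$ matrix), is nonsingular. Its determinant contains a monomial using each $k\in C$ with total degree $2$ in the entries of $X_k$, with projection ${\bf 1}_C$. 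Here one can take $X_k$ to be a generic rank-one-like pattern along a cyclic ordering, the way odd cycles become nonsingular after doubling. Taking the direct sum over $M$ and all blocks, which is legitimate because the spans are independent after a change of basis $S,T \in GL_n(\KK)$ that does not affect supports up to the choice of $I,J$, gives a subdeterminant with a monomial projecting to $2u$. Hence $f_{{\cal P}^2}(c) \geq 2c^\top u$.

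The main obstacle is the second step for odd blocks. I need to exhibit one specific monomial whose exponent vector has degree exactly $2$ in each $X_k$ (summed over its four entries) and which survives cancellation. I would first try an explicit monomial of a leading-term type: after a generic change of coordinates, order the lines of $C$ cyclically and choose entries $x_{k,12}, x_{k,21}$ alternately. I would then show its coefficient is a nonzero product of $2\times 2$ minors of the pairs $(u_k,v_k)$, using the dimension condition $\dim \sum_{k\in C}\ell_k = |C|$ together with the fact that every proper subfamily is ``independent enough''.
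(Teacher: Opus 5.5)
\textbf{There is a genuine gap: the structure your construction relies on is false in general, and the key step is not carried out.}

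Your reduction is sound. The chain ${\cal P}^{2}(A_L)\subseteq{\cal P}^{\leq 2}(A_L)\subseteq{\cal Q}(A_L)$ is trivial, and it suffices to show that each vertex $2u$ of ${\cal Q}(A_L)=2{\rm FMP}(L)$ lies in ${\cal P}^{2}(A_L)$. But the proposal stops exactly where the work is.

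\emph{The block structure you assume does not hold.} Theorem~\ref{thm:CLV_characterization} gives only a complete flag in which $I(u)^{\cal X}$ is a node-disjoint union of a matching and odd cycles. It does not make the spans of the blocks independent, nor of dimension $|C|$. For instance, in $\KK^6$ with the coordinate flag, take $\ell_1=\langle e_1,e_2\rangle$, $\ell_2=\langle e_1,e_3\rangle$, $\ell_3=\langle e_2,e_3\rangle$, $\ell_4=\langle e_4+f_4,e_5+g_4\rangle$, $\ell_5=\langle e_4+f_5,e_6+h_5\rangle$, $\ell_6=\langle e_5+g_6,e_6+h_6\rangle$, with generic $f_4,g_4,f_5,h_5,g_6,h_6\in X_3$.
\begin{itemize}
\item The flag graph is two disjoint triangles.
\item $u=\frac12{\bf 1}$ is feasible (reduce modulo $X_3$).
\item The six flag constraints are tight and linearly independent, so $u$ is a vertex.
\item Yet $\ell_4+\ell_5+\ell_6=\KK^6$, which contains $\ell_1+\ell_2+\ell_3$.
\end{itemize}
Your claim that every $|C|-1$ lines of a block form a matching is also inconsistent, as literally stated, with the block spanning only $|C|$ dimensions: that would need dimension $2|C|-2>|C|$. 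Consequently no subdeterminant splits as a direct sum of block pieces. Cross terms between blocks enter the coefficient of your target monomial, so the assembly step has nothing to stand on.

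\emph{The core step is not established.} Even for a single odd block you only describe what you would try. The nonvanishing of a monomial with $X_k$-degree exactly $4u_k$ for all $k$ is the entire content of the theorem.

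\emph{A smaller point.} $S\in GL_n(\KK)$ does not preserve supports of individual block minors of the blow-up, since Cauchy--Binet for $(S\otimes I_2)A_L^{\{2\}}(S^\top\otimes I_2)$ involves non-block minors. What is invariant is the polytope ${\cal P}^2$, via $\deg\Det_2$ over $UD_2(\KK(t))$.

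\textbf{How the paper avoids this.} The paper never looks at individual monomials.
\begin{itemize}
\item It takes an optimal solution of (\ref{eqn:A[c]_dual}), chosen by skew-symmetry with $T=S^\top$ and $p=q$.
\item It passes to the $t^0$-part $C=\sum_kC_kx_k$ of $(t^{-p})SBS^\top(t^{-p})$, whose $C_k$ are still skew-symmetric of rank at most $2$.
\item It uses bimatroid exchange plus skew-symmetry to find a principal $K$ with $C[K,K]$ nc-nonsingular.
\item It applies Theorem~\ref{thm:2nd_blowup_OkiSoma} to $C[K,K]$, giving $\deg\det(SBS^\top)[K,K]^{\{2\}}=4{\bf 1}^\top p=2\varDelta_{\rm max}(B)$.
\item It transfers this to $B$ by Lemma~\ref{lem:degDet_d_M} and $GL_n$-invariance of the maximal Dieudonn\'e degree.
\end{itemize}

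\textbf{How your primal route could be repaired.} A degeneration argument would likely supply the missing step in place of direct sums.
\begin{itemize}
\item Make the CLV flag the coordinate flag and set $X_k=O$ for $k\notin I(u)$. Let $N=2{\bf 1}^\top u$.
\item Up to scalars, write $A_k=a_kb_k^\top-b_ka_k^\top$, with $a_k\in\ell_k\cap X_{i_k}$ and $b_k\in\ell_k\cap X_{j_k}$ having leading entries $1$, where $e_k^{\cal X}=\{i_k<j_k\}$.
\item Conjugate the $[N]\times[N]$ block by $D=\mathrm{diag}(\epsilon^{w_1},\ldots,\epsilon^{w_N})$ with $w$ strictly decreasing, and write $DA_kD=\epsilon^{w_{i_k}+w_{j_k}}\hat A_k(\epsilon)$.
\item Every $v\in[N]$ satisfies $\sum_{k:\,v\in e_k^{\cal X}}u_k=1$. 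Hence a monomial with $X_k$-degree $4u_k$ has the same coefficient in $\det\sum_k\hat A_k(\epsilon)\otimes X_k$ as in the original determinant.
\item That coefficient is polynomial in $\epsilon$, so one may set $\epsilon=0$, where $\hat A_k(0)=e_{i_k}e_{j_k}^\top-e_{j_k}e_{i_k}^\top$.
\item The determinant then factors over the components of $I(u)^{\cal X}$. Matching edges give $(\det X_k)^2$.
\item Each odd cycle gives the determinant of the $2$-blow-up of its Tutte matrix. This is nonzero (by a direct check, or by Theorem~\ref{thm:2nd_blowup_OkiSoma}).
\item All monomials of that odd-cycle factor have degree $2$ in each $X_k$. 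The degrees $d_k$ of any Leibniz term satisfy $\sum_{k\ni v}d_k=4$ at each vertex, which forces $d_k=2$ around an odd cycle.
\end{itemize}
Without an argument of this kind, the proposal does not prove the theorem.
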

Since linear optimization on ${\cal P}(A_L)$ is solved in polynomial-time~\cite{IwataKobayashi2017}, 
this may support our expectation that 
small $r$ with ${\cal P}^{\leq r}(A) = {\cal Q}(A)$ implies tractability of Edmonds' problem.

The second one is a stronger estimate of the integrality gap of ${\cal Q}(A_L)$.
We have seen in Theorem~\ref{thm:2-approx_poly} that $(1/2){\cal Q}(A_L) \subseteq {\cal P}(A_L)$, which is improved to:
\begin{Thm}\label{thm:3/2-approx}
\[
\frac{2}{3}{\cal Q}(A_L) \subseteq {\cal P}(A_L).
\]
\end{Thm}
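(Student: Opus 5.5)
By Lemma~\ref{lem:P(A_L)=2M(L)} and ${\cal Q}(A_L)=2{\rm FMP}(L)$, the statement is equivalent to $\frac{2}{3}{\rm FMP}(L)\subseteq {\rm MP}(L)$. Since both polytopes are convex, it suffices to show $\frac23 u\in{\rm MP}(L)$ for every extreme point $u$ of ${\rm FMP}(L)$. The plan has three steps: identify the extreme points using Theorem~\ref{thm:2nd_blowup}, prove a structure lemma for them, and then finish by averaging matchings, as in the graph case.

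\emph{Step 1 (half-integrality).} By Theorem~\ref{thm:2nd_blowup}, ${\cal Q}(A_L)={\cal P}^2(A_L)$. So every extreme point of ${\cal Q}(A_L)$ lies in some ${\cal S}^2_\ell$, that is, it has the form $\proj_2(z)\in\frac12\ZZ^m_{\ge0}$. By Theorem~\ref{thm:integrality}, it is moreover integral. Hence every extreme point $u$ of ${\rm FMP}(L)$ is half-integral. We split $\supp u$ into $J_1=\{k:u_k=1\}$ and $J_{1/2}=\{k:u_k=1/2\}$. One could get $u_k\le 1$ directly from the constraint with $X=\ell_k$; I will instead obtain it from the $\proj_2$ description together with the degree bound $\deg\det\le 2$ per variable block.

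\emph{Step 2 (structure lemma).} Let $W:=\sum_{k\in J_1}\ell_k$, and pass to the quotient $\KK^n/W$. I would show three things. First, $J_1$ is a matching. Second, the images $\bar\ell_k$ for $k\in J_{1/2}$ are still $2$-dimensional. Third, and most important, $J_{1/2}$ partitions into blocks $C_1,\dots,C_p$ with independent spans in $\KK^n/W$, where each block $C$ has odd size $2s+1\ge 3$ and $\dim\sum_{k\in C}\bar\ell_k=2s+1$. This is the matroid analogue of ``a vertex of the fractional matching polytope is a matching plus vertex-disjoint odd cycles.'' To prove it, I would take the tight subspaces $X$, i.e.\ those with $\sum_k u_k\dim(\ell_k\cap X)=\dim X$. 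I would use that they form a lattice closed under $\cap$ and $+$, which follows from the modularity of $\dim$ and the supermodularity of $X\mapsto\sum_k u_k\dim(\ell_k\cap X)$. Since $u$ is a vertex, the tight constraints determine $u$ uniquely, and I would use this uncrossing argument in the style of Vande Vate and Gijswijt--Pap to reduce to a laminar chain. Inside each minimal tight block, the value $\frac12(\text{number of lines})\cdot 2=\dim$ forces the size to be odd whenever the block is not already integral. An alternative route is to read the block structure directly from a nonvanishing monomial of $\det A_L[I,J]^{\{2\}}$: in the second blow-up, a line used with $u_k=\frac12$ contributes one variable of $X_k$, and the corresponding vectors $u_k\otimes e_i, v_k\otimes e_j$ decompose into cycles.

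\emph{Step 3 (averaging).} I also need that each block $C$ of size $2s+1$ behaves like an odd cycle: there are $2s+1$ subsets $M_1,\dots,M_{2s+1}\subseteq C$ of size $s$, each a matching in $\KK^n/W$, that cover every element of $C$ exactly $s$ times. For an odd cycle these are the rotations of a near-perfect matching. Granting this, $\frac{1}{2s+1}\sum_i {\bf 1}_{M_i}$ equals $\frac{s}{2s+1}\ge\frac13$ on $C$. Taking $J_1$ together with independent choices over the blocks gives a convex combination of matchings for $L$ that dominates $\frac23 u$ on $J_1$ (there it equals $1\ge\frac23$) and on $J_{1/2}$. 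Since ${\rm MP}(L)$ is down-closed within $\RR^m_{\ge0}$, this yields $\frac23 u\in{\rm MP}(L)$.

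The main obstacle is the exchange property in Step 3, together with the block decomposition of Step 2. I would first try to derive the exchange property from minimality of the tight block: if some $s$-subset family failed to exist, a Rado/Hall-type argument for linear matroid matching (Lov\'asz's min-max) would produce a smaller tight subspace inside the block, contradicting its minimality. If this route stalls, my fallback is to argue via support functions instead. By Lemma~\ref{lem:P_subseteq_Q}, it would suffice to show $\varDelta_{\max}(A_L[c])\le\frac32\delta_{\max}(A_L[c])$ for integer $c$. I would attempt this by refining the Smith--McMillan argument of Lemma~\ref{lem:SM}, using skew-symmetry, so that the duplicated weights $\alpha^*$ only need to be doubled on a third of the rank.
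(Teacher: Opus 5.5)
\textbf{There is a genuine gap: your Step~2 is false as stated, and Step~3 is left unproven.} The outer framework is sound. Reducing to vertices of ${\rm FMP}(L)$ is fine. The half-integrality of Step~1 is fine, though it follows directly from the integrality of ${\cal Q}(A_L)=2{\rm FMP}(L)$. Averaging plus down-closedness of ${\rm MP}(L)$ in Step~3 would indeed give $\frac23 u\in{\rm MP}(L)$. But everything rests on Steps~2 and~3.

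\textbf{A counterexample to Step~2.} Take $n=6$, a basis $f_1,\ldots,f_6$, the flag $X_i=\langle f_1,\ldots,f_i\rangle$, and the six planes $\langle f_1,f_2+f_3\rangle$, $\langle f_3,f_4+f_5\rangle$, $\langle f_1,f_5\rangle$, $\langle f_2,f_3+f_4\rangle$, $\langle f_4,f_5+f_6\rangle$, $\langle f_1+f_2,f_6\rangle$, with $u=\frac12{\bf 1}$. Their flag edges are $13,35,15$ and $24,46,26$: two node-disjoint triangles covering $[6]$. Then $u$ is a vertex of ${\rm FMP}(L)$, for two reasons.
\begin{itemize}
\item It is feasible. $\dim(\ell_k\cap X)$ is at most the number of endpoints of $e^{\cal X}_k$ among the $\dim X$ indices where $\dim(X\cap X_i)$ jumps, and every node has fractional degree~$1$.
\item It is determined by tight constraints. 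The six constraints at $X_1,\ldots,X_6$ are tight, and their coefficient vectors are partial sums of the rows of the node--edge incidence matrix of two odd cycles, which is nonsingular.
\end{itemize}
Here $J_1=\emptyset$ and $W=0$, so Step~2 would require splitting the six planes into two triples, each lying in a $3$-dimensional subspace. However, the only pair of these planes meeting nontrivially is $\langle f_1,f_2+f_3\rangle,\langle f_1,f_5\rangle$. Three planes in a $3$-space meet pairwise, so no three of them lie in a common $3$-space. Hence no decomposition into odd blocks of the kind you describe exists. This also rules out a chained version in which each block is measured modulo the previous ones. Your uncrossing heuristic cannot produce such blocks, and the matchings inside a block required in Step~3 have no block to live in.

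\textbf{The missing idea: put the odd cycles on an auxiliary graph, not on subspaces.} This is the Chang--Llewellyn--Vande Vate characterization (Theorem~\ref{thm:CLV_characterization}). For a vertex $u$ there is a complete flag ${\cal X}$ such that, in $K_n$, $I_1(u)^{\cal X}$ is a matching and $I_{1/2}(u)^{\cal X}$ is a node-disjoint union of odd cycles. Combine it with Lemma~\ref{lem:flag_matching}: if $I^{\cal X}$ is a graph matching for some complete flag, then $I$ is a matching for $L$.

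With these, your Step~3 is immediate. The $|C|$ rotations of a near-perfect matching of each odd cycle, united with $I_1(u)$, are graph matchings and hence matchings for $L$. Compatibility across cycles comes from node-disjointness in $K_n$, not from independence of spans. Your averaging then goes through verbatim. The paper instead picks the $c$-best rotation in each cycle and concludes via Lemma~\ref{lem:P_subseteq_Q}.

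Your Rado/Hall route and the Smith--McMillan fallback are not developed enough to replace this ingredient.
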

In particular, the fractional matroid matching relaxation has integrality gap $2/3$.
This is a generalization of the classical fact that a natural fractional relaxation ({\em fractional matchings}) of matchings in undirected graphs 
has integrality gap $2/3$.
\begin{Rem}
    Lee, Sviridenko and Vondr\'ak~\cite{LeeSviridenkoVondrak2013} showed that fractional matroid matching relaxation 
    has unbounded integrality gap, even for linear matroids. See also \cite{KaparisLetchfordMourtos2020} for related results.
    Our Theorem~\ref{thm:3/2-approx} does not contradict these results.
In fact, fractional matroid matchings are defined for a general matroid $\bf M$ together with rank-$2$ flats $\ell_1,\ell_2,\ldots,\ell_m$.
By using the lattice $({\cal L}_{\bf M},\wedge,\vee)$ of flats and the rank function $r$, fractional matroid matchings with respect to ${\bf M}$ are defined 
as vectors $u \in \RR_{\geq 0}^m$ satisfying $\sum_{k =1}^m r(X \wedge \ell_k) u_k \leq r(X)$ for all flats $X \in {\cal L}_{\bf M}$.
Their fractional matroid matching is with respect to the (finite) matroid ${\bf M}$ represented by vectors
$u_1,v_1,u_2,v_2,\ldots,u_m,v_m$ in (\ref{eqn:A_L}).
On the other hand, our fractional matroid matching is with respect to the infinite matroid ${\bf M}(\KK^n)$ of all vectors in $\KK^n$. Since $r(X \wedge \ell_k) \leq \dim X \cap  \ell_k$ and ${\cal L}_{\bf M} \subset {\cal L}_{{\bf M}(\KK^n)}$, our relaxation is stronger than theirs. See also Remark~\ref{rem:2/3}. 
\end{Rem}

\paragraph{Proof of Theorem~\ref{thm:2nd_blowup}.}
By specialization $X_k \leftarrow x_k I_2$, 
the second blow-up $A_L^{\{2\}}$ becomes $A_L \otimes I_2$, 
from which we see ${\cal P}^{2}(A_L) = {\cal P}^{\leq 2}(A_L)$.
For showing ${\cal P}^{2}(A_L) = {\cal Q}(A_L)$, 
we prove that for all $c \in \ZZ^m$ it holds
\begin{equation}\label{eqn:goal} 
\varDelta_{\rm max}(A_L[c]) \leq \frac{1}{2} \max_{I,J \subseteq [n]: |I|=|J|} \deg \det (A_L[c])[I,J]^{\{2\}}\ (=  f_{{\cal P}^{2}(A_L)}(c)).
\end{equation}
Let $B := A_L[c]$ for simplicity. We may assume that $c$ is an even vector in $(2\ZZ)^m$ (by positive homogeneity of support functions). Then $p,q$ in (\ref{eqn:A[c]_dual}) can be assumed to even vectors.
By skew-symmetry of $A_L$ and \cite[Lemma 3.11]{HIOS2025}, 
the minimum in (\ref{eqn:A[c]_dual}) is attained by $T = S^{\top}$ 
and $p = q \in \ZZ^n_{\geq 0}$. 
Consider $S \in GL_n(\KK)$ and $p \in \ZZ^n_{\geq 0}$ 
with $\deg (t^{-p})SBS^{\top}(t^{-p})_{ij} \leq 0$ for $i,j \in [n]$.
As in the ordinary determinant, 
it holds $\deg \Det LM = \deg \Det L+\deg \Det M$ 
and $\deg M_{ij} \leq 0 \ (i,j \in [n]) \Rightarrow \deg \Det M \leq 0$; see \cite[Lemmas 2.8 and 2.10]{HH_degdet}. Therefore we have  
\begin{equation*}
    0 \geq \deg \Det ((t^{-p})SBS^{\top}(t^{-p}))[I,J] = - \sum_{i\in I}p_i - \sum_{j\in J}p_j + \deg \Det (SBS^{\top})[I,J]. 
\end{equation*}
Therefore, we have
\begin{equation}\label{eqn:degDet<=21^Tp}
\deg \Det (SBS^{\top})[I,J] \leq \sum_{i\in I}p_i + \sum_{j\in J}p_j \leq 2{\bf 1}^{\top}p,
\end{equation}
Taking the maximum over indices $I,J \subseteq [n]$, we have
\begin{equation}\label{eqn:Delta<=21^Tp}
\varDelta_{\rm max}(B)=\varDelta_{\rm max}( SBS^{\top}) \leq 2{\bf 1}^{\top}p, 
\end{equation}
where the equality follows from the fact that 
the maximum degree of Dieudonn\'e deteminants is also invariant
under multiplication by $GL_n(\KK(t)^-)$; see \cite[Proposition 29 and Section A.4]{HH_degdet}.
Suppose that $p,S$ attains the minimum. 
Then the equality in (\ref{eqn:Delta<=21^Tp}) holds.
Necessarily,
all equalities in (\ref{eqn:degDet<=21^Tp}) hold for some $I,J \subseteq [n]$.
Such an index pair is exactly $(I,J)$ satisfying
\begin{itemize}
\item[($*$)] $\deg \Det ((t^{-p})SBS^{\top}(t^{-p}))[I,J] = 0$ and $I \cap J \supseteq \{i \in [n] \mid p_i > 0\}$. 
\end{itemize}
\begin{Clm}
There is $K \subseteq [n]$ such that $(K,K)$ satisfies ($*$).
\end{Clm}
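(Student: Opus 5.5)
The plan is to pass from the degree condition in ($*$) to a nonsingularity condition on a skew-symmetric linear matrix of "leading terms", and then to use a rank property of skew-symmetric matrices over a skew field. Put $M := (t^{-p})SBS^{\top}(t^{-p})$. Since $B = A_L[c]$ is skew-symmetric, so is $M$. By the choice of $p,S$, every entry of $M$ has degree $\leq 0$. Write $M = C + (\text{terms of negative degree in } t)$, where $C = \sum_k C_k x_k$ is the degree-$0$ part. Here $C_k \in \KK^{n\times n}$ collects the coefficients $t^0$ of $(t^{-p})SA_kt^{c_k}S^{\top}(t^{-p})$, so each $C_k$ is again skew-symmetric.

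\textbf{Step 1 (leading-term criterion).} For a matrix whose entries have degree $\leq 0$, I will show that $\deg \Det M[I,J] = 0$ holds if and only if $C[I,J]$ is nonsingular over the free skew field, i.e., $\ncrk C[I,J] = |I|$. This is the standard leading-term argument for $\deg\Det$ from \cite{HH_degdet}. One direction: if $C[I,J]$ is singular, then by Theorem~\ref{thm:FortinReutenauer} there are $S',T' \in GL(\KK)$ creating a zero block of size $a\times b$ with $a+b>|I|$ in $C[I,J]$. Multiplying the corresponding rows by $t$ then produces a matrix with entries of degree $\leq 0$ and Dieudonn\'e degree $\geq a+b-|I|>0$... read in the other direction, this shows $\deg\Det M[I,J]<0$. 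Other direction: if $C[I,J]$ is invertible, its leading part is a unit, so the degree is exactly $0$. I expect this step to need the most care, and I will cite \cite[Lemmas 2.8, 2.10]{HH_degdet} where possible. Given Step 1, condition ($*$) becomes: $C[I,J]$ is nc-nonsingular and $I\cap J \supseteq P := \{i \mid p_i>0\}$.

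\textbf{Step 2 (skew-symmetric extension).} Let $(I,J)$ satisfy ($*$); such a pair exists, as noted just before the Claim. Regard the rows of $C$ as vectors in a left vector space over the free skew field $\FF$. Since $C[I,J]$ is nonsingular, the rows indexed by $I$ are linearly independent. Using the exchange property for vector spaces over a skew field, extend $I$ to $K \supseteq I$ such that the rows indexed by $K$ form a basis of the row space of $C$, so $|K| = r := \ncrk C$.

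Next I transfer this to columns. The transpose $\sum_k C_k^{\top}x_k$ has the same nc-rank and the same independence structure on index sets; this follows from the symmetric roles of rows and columns in (\ref{eqn:formula1}), or from the formula $A = BC^{\top}$. Combined with $C_k^{\top} = -C_k$, this shows that the columns indexed by $K$ form a basis of the column space.

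Now the usual argument applies. Every row of $C$ is a left combination of the rows in $K$, so $\rk C[K,K] = \rk C[[n],K]$. Since the columns indexed by $K$ are independent, $\rk C[[n],K] = r = |K|$. Hence $C[K,K]$ is nonsingular.

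\textbf{Step 3 (conclusion).} We have $K \supseteq I \supseteq I\cap J \supseteq P$, and $C[K,K]$ is nonsingular. By Step 1, $\deg \Det M[K,K] = 0$, so $(K,K)$ satisfies ($*$).

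The main obstacle is making the row/column transfer rigorous in the noncommutative setting (independence of rows indexed by $I$ implying independence of the corresponding columns). If the direct argument is awkward, I would instead work inside the $2$nd blow-up, or any $d$-blow-up realizing the nc-rank via Theorem~\ref{thm:blow-up}. There $C^{\{d\}}$ is an honestly skew-symmetric commutative matrix over $\KK(\{x_{k,ij}\})$, and the classical fact that a principal submatrix on a row basis of a skew-symmetric matrix is nonsingular applies blockwise. Together with Lemma~\ref{lem:degDet_d_M}, this translates back to the nc statement.
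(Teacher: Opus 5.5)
Your proof is correct, but its core step differs from the paper's. Both arguments use the same two ingredients. The first is the leading-term criterion: $\deg\Det M[I,J]=0$ if and only if $C[I,J]$ is nc-nonsingular \cite[Lemma 2.10]{HH_degdet}. The second is the fact, from Theorem~\ref{thm:FortinReutenauer}, that nc-nonsingularity of a linear matrix is preserved under transposition.

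The paper then chooses a pair $(I,J)$ satisfying ($*$) with $|I\cap J|$ maximum and observes that $(J,I)$ also satisfies ($*$). It uses the bimatroid exchange property for nonsingular submatrices over a skew field to contradict maximality when $I\neq J$.

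You instead carry over to the free skew field the classical fact that a row basis of a skew-symmetric matrix indexes a nonsingular principal submatrix. You extend $I$ to a left basis $K$ of the row space of $C$, transfer independence from the rows in $K$ to the columns in $K$, and conclude $\ncrk C[K,K]=\ncrk C[[n],K]=|K|$.

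Your route is more direct. It needs no extremal choice and no bimatroid axiom, and it shows that every row basis of $C$ containing $I$ works, so $|K|=\ncrk C$. The paper's exchange argument has the advantage of being the same template as the Claim in the appendix for the valuated bimatroid $(I,J)\mapsto\deg\det B[I,J]$, where no single row space captures the condition.

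A few points need tightening.
\begin{itemize}
\item The row-to-column transfer is cleanest on square submatrices. Rank over a skew field is the maximum size of an invertible square submatrix, so independence of the rows in $K$ gives $J'$ with $C[K,J']$ invertible. Then $C[K,J']^{\top}=-C[J',K]$ is nc-nonsingular by Theorem~\ref{thm:FortinReutenauer}, so the columns in $K$ are right independent.
\item Your alternative justification via $A=BC^{\top}$ does not work as stated, because transposition does not reverse products over $\KK\langle x_1,\ldots,x_m\rangle$. You would need the anti-automorphism reversing monomials, which fixes linear matrices.
\item In Step 1, after creating the $a\times b$ zero block in $S'M[I,J]T'$, scaling its rows by $t$ alone can push entries to degree $1$. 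You must also scale the $|I|-b$ complementary columns by $t^{-1}$. Then all entries have degree $\le 0$, and $\deg\Det M[I,J]\le |I|-a-b<0$.
\item Drop the blow-up fallback: $C^{\{d\}}$ is not skew-symmetric, since $(C^{\{d\}})^{\top}=-\sum_k C_k\otimes X_k^{\top}$ and $X_k^{\top}\neq X_k$.
\end{itemize}
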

For the rest of the proof, we use the expression
\begin{equation}\label{eqn:expand}
(t^{-p})SBS^{\top}(t^{-p}) = \sum_{k=1}^m C_kx_k  +  t^{-1}Q,
\end{equation}
where $C_k$ is a skew-symmetric matrix over $\KK$ and $Q \in (\KK(x_1,\ldots,x_m)(t)^{-})^{n\times n}$.
Necessarily, it holds $\rank C_k \leq 2$.
Indeed, we may assume that $-p$ is a decreasing vector.
Then $C_k$ is an anti-block diagonal matrix, 
and $(t^{-p})SBS^{\top}(t^{-p})$ is a lower block anti-triangular matrix.
From this, we see that $\rank C_k > 2$ would imply a contradiction 
$\rank S(u_kv_k^{\top} - v_ku_k^{\top})S^{\top} > 2$.
Let $C := \sum_{k=1}^m C_kx_k$.
\begin{proof}[Proof of Claim] 
As in the ordinary determinant,
$\deg \Det ((t^{-p})SBS^{\top}(t^{-p}))[I,J] = 0$ if and only if $C[I,J]$ is nc-nonsingular (i.e., $\ncrk C[I,J] = |I|$); see \cite[Lemma 2.10]{HH_degdet}.
In general, for a matrix $M$ over a skew field $\FF$, 
the family of all pairs $I,J$ of indices with $M[I,J]$ nonsingular 
forms a {\em bimatroid}~\cite{Kung1978}\footnote{The (right) linear independence of the columns of $[M\ I]$ has the matroid property even if $\FF$ is noncommutative. 
From this, the bimatroid property of $M$ follows.}; see also \cite[Section 2.3.7]{MurotaMatrix}. 
Then, it satisfies the exchange property: 
If $M[I,J]$ and $M[I',J']$ are nonsingular and $i' \in I' \setminus I$, 
then there is $j' \in J'\setminus J$ 
such that $M[I \cup \{i'\},J \cup \{j'\}]$ is nonsingular or there is $i \in I \setminus I'$ such that
$M[I \cup \{i'\} \setminus \{i\},J]$ is nonsingular.

Consider a pair $(I,J)$ of indices satisfying ($*$) such that $|I \cap J|$ is maximum.  
Suppose (for contradiction) that $I \neq J$.
Consider the bimatroid for $C$.
By skew-symmetry, it holds $(C[I,J])^{\top} = - C[J,I]$.
Now $C[I,J]$ is nc-nonsingular. 
By Theorem~\ref{thm:FortinReutenauer}, its transpose $(C[I,J])^{\top}$ is also nc-nonsingular.\footnote{On a general skew field, 
transpose $M^{\top}$ of a nonsingular matrix $M$ is not necessarily nonsingular.}
Therefore $(J,I)$ also satisfies ($*$). 
Apply the exchange property for $I,J, I'=J, J' = I$ and $j \in J\setminus I$.
There is $i \in I \setminus J$ 
such that $C[I \cup \{j\},J \cup \{i\}]$ is nc-nonsingular or
$C[I \cup \{j\} \setminus \{i\},J]$ is nc-nonsingular.
Then, both $(I \cup \{j\},J \cup \{i\})$ and 
$(I \cup \{j\} \setminus \{i\},J)$ satisfy ($*$), with contradicting   
the choice of $(I,J)$. 
Thus we have $I=J =:K$.  
\end{proof}


Choose such an index set $K$ in the claim.
Now $C[K,K] (= \sum_{k=1}^m C_k[K,K]x_k)$ is nc-nonsingular and 
each $C_k[K,K]$ is rank-2 skew-symmetric (as long as $C_k[K,K] \neq O$).
By Theorem~\ref{thm:2nd_blowup_OkiSoma}, the second blow-up $C[K,K]^{\{2\}}$ is nonsingular. 
Consider the second blow-up of (\ref{eqn:expand}):
\begin{equation*}
((t^{-p})SBS^{\top}(t^{-p}))[K,K]^{\{2\}} = C[K,K]^{\{2\}}  +  t^{-1}Q[K,K]^{\{2\}},
\end{equation*}
Then the degree of the determinant of the LHS is zero:
\begin{equation*}
0=  \deg \det ((t^{-p})SBS^{\top}(t^{-p}))[K,K]^{\{2\}} = - 4 {\bf 1}^{\top} p 
+ \deg \det (SBS^{\top})[K,K]^{\{2\}}. 
\end{equation*}
Here, $B^{\{2\}}$ can be viewed as an $n \times n$ matrix over $UD_2(\KK(t))$.
Then $(SBS^{\top})[K,K]^{\{2\}}$ is written as $(SB^{\{2\}}S^{\top})[K,K]$. By Lemma~\ref{lem:degDet_d_M}, we have (\ref{eqn:goal}):
\begin{eqnarray*}
 &&\varDelta_{\rm max}(B) = 2 {\bf 1}^{\top} p = \deg \Det_2 (SB^{\{2\}}S^{\top})[K,K]  \leq \max_{I,J} 
\deg \Det_2  (SB^{\{2\}}S^{\top})[I,J]   \nonumber \\
&&=  \max_{I,J}\deg \Det_2 B^{\{2\}} [I, J]  
=    \frac{1}{2} \max_{I,J} \deg \det B[I,J]^{\{2\}},
\end{eqnarray*}
where the third equality follows from the same reason we used in (\ref{eqn:Delta<=21^Tp}).

\paragraph{Proof of Theorem~\ref{thm:3/2-approx}.}
The proof combines 
the classical $2/3$-approximation of weighted matching and 
characterizations of (fractional) matroid matchings using flags of vector subspaces.
Here a {\em complete flag} ${\cal X} = (X_i)$ of $\KK^n$
is an ordered tuple of vector subspaces $X_0,X_1,\ldots,X_n$ such that
\begin{equation*}
\{0\} = X_0 \subset X_1 \subset X_2 \subset \cdots \subset X_n = \KK^n,
\end{equation*}
where $\dim X_i = i$.
Let $L = \{\ell_1,\ldots,\ell_m\}$ be a collection of 2-dimensional subspaces.
Let ${\cal X} = (X_i)$ be a complete flag.
For each $k \in [m]$, there are exactly two indices 
$i$ with $\dim \ell_k \cap X_{i} - \dim \ell_k \cap X_{i-1} = 1$. Regard this pair of indices 
as an edge $e^{\cal X}_k$ of the complete graph $K_n$ on vertex set $[n]$.
For $I \subseteq [m]$, define edge set $I^{\cal X} := \{e_k^{\cal X} \mid k \in I\}$ (which is allowed be a multiset).


\begin{Lem}\label{lem:flag_matching}
For $I \subseteq [m]$,
the following conditions are equivalent:
\begin{itemize}
\item[(1)] $I$ is a matching for $L$.
\item[(2)] There is a complete flag ${\cal X} = (X_i)$ of $\KK^n$ 
such that $I^{\cal X}$ is a matching in $K_n$ 
\end{itemize}
\end{Lem}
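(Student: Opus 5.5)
For (1)$\Rightarrow$(2), I will construct the flag explicitly from a basis adapted to the matching. If $I$ is a matching for $L$, then the vectors $u_k,v_k$ $(k\in I)$ are linearly independent, so $2|I|\le n$. Extend them to a basis of $\KK^n$ ordered as
\[
u_{k_1},v_{k_1},u_{k_2},v_{k_2},\ldots,u_{k_h},v_{k_h},w_{2h+1},\ldots,w_n ,
\]
where $I=\{k_1,\ldots,k_h\}$, and let $X_i$ be the span of the first $i$ vectors. For $k=k_s\in I$ we have $\ell_k\cap X_{2s-2}=\{0\}$, because the vectors are independent. Also $\dim \ell_k\cap X_{2s-1}=1$, since this intersection contains $u_k$ and $X_{2s-1}\not\ni v_k$. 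Finally $\ell_k\subseteq X_{2s}$. So the jumps occur exactly at $2s-1$ and $2s$, and $e^{\cal X}_{k_s}=\{2s-1,2s\}$. These edges are pairwise disjoint, hence $I^{\cal X}$ is a matching in $K_n$.

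For (2)$\Rightarrow$(1), let ${\cal X}$ be a complete flag with $I^{\cal X}$ a matching. For each $k\in I$ write $e^{\cal X}_k=\{a_k,b_k\}$ with $a_k<b_k$. Choose $u_k\in (\ell_k\cap X_{a_k})\setminus X_{a_k-1}$ and $v_k\in(\ell_k\cap X_{b_k})\setminus X_{b_k-1}$. Then $u_k,v_k$ form a basis of $\ell_k$: they are nonzero, and $v_k\notin X_{b_k-1}\supseteq X_{a_k}\ni u_k$, so $v_k$ is not a multiple of $u_k$. This is where the definition of the jump indices is used. Each chosen vector $w$ lies in $X_{\mathrm{ind}(w)}\setminus X_{\mathrm{ind}(w)-1}$, and since $I^{\cal X}$ is a matching the indices $a_k,b_k$ $(k\in I)$ are pairwise distinct. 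Vectors with pairwise distinct ``flag levels'' are linearly independent: in any nontrivial linear relation, take the vector of largest level with nonzero coefficient; it would lie in the lower subspace spanned by the others, a contradiction. Hence $\{u_k,v_k\}_{k\in I}$ is independent, so $\dim\sum_{k\in I}\ell_k=2|I|$ and $I$ is a matching for $L$.

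No step is a real obstacle. The only point needing care is that the definition of $e^{\cal X}_k$ gives exactly two jumps. This holds because $\dim\ell_k\cap X_i$ increases from $0$ to $2$ in unit steps, as each $X_i$ has codimension one in $X_{i+1}$. The multiset convention for $I^{\cal X}$ is harmless: a repeated edge already fails to be a matching.
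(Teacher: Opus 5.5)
Your proof is correct and follows essentially the same route as the paper. For (1)$\Rightarrow$(2) you build the flag from the initial segments of a basis adapted to $\{\ell_k\}_{k\in I}$, as the paper does. For (2)$\Rightarrow$(1), your ``distinct flag levels imply linear independence'' argument is the coordinate-free form of the paper's step: it passes to the coordinate flag and notes that the chosen $u_k,v_k$ are distinct columns of an upper-triangular matrix with nonzero diagonal.
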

\begin{proof}
(1) $\Rightarrow$ (2). We can choose a basis $f_1,f_2,\ldots,f_n$ of $\KK^n$ 
such that each $\ell_k$ is spanned by some pair in the basis.
Then, (2) is satisfied by the complete flag ${\cal X} = (X_i)$ such that $X_i$ is spanned by $f_1,f_2,\ldots,f_i$.

(2) $\Rightarrow$ (1).
By coordinate transformation, 
we may assume that ${\cal X}$ 
is a coordinate flag, that is, 
$X_i$ is spanned by 
${\bf 1}_1,{\bf 1}_2,\ldots,{\bf 1}_i$.
For each $k \in I$, if  $e_k^{\cal X} = ij$ 
with $i < j$, 
then we choose a basis $u_k,v_k$ of $\ell_k$ 
so that $u_k \in X_i$ with $(u_k)_i \neq 0$ 
and $v_k \in X_j$ with $(v_k)_j \neq 0$.  
Since $I^{\cal X}$ is a matching, 
$2|I|$ vectors $u_k, v_k$ are distinct column vectors
of an $n \times n$ upper-triangular matrix with nonzero diagonals.
This means that $u_k, v_k$ $(k \in I)$ are linearly independent, and $I$ is a matching for $L$.
%
%
\end{proof}

We next consider a characterization of extreme points of ${\rm FMP}(L)$ due to Chang, Llewellyn, and Vande Vate~\cite{CLV01b}.
Note that ${\rm FMP}(L) (= (1/2){\cal Q}(A_L))$ is a half-integral polytope belonging to $[0,1]^m$. 
Therefore, any extreme point is a vector in $\{0,1/2,1\}^m$.
For $u \in \{0,1/2,1\}^{m}$, 
let $I_{1/2}(u) :=\{k \in [m] \mid u_k = 1/2\}$, $I_{1}(u) :=\{k \in [m] \mid u_k = 1\}$, and $I(u) := I_1(u) \cup I_{1/2}(u)$.
\begin{Thm}[\cite{CLV01b}]\label{thm:CLV_characterization}
For $u \in \{0,1/2,1\}^{m}$, the following conditions are equivalent:
\begin{itemize}
\item[(1)] $u$ is an extreme point of ${\rm FMP}(L)$. 
\item[(2)] There is a complete flag ${\cal X} = (X_i)$ of $\KK^n$ satisfying the following:
\begin{itemize}
\item[(2-0)]  $X_{2 {\bf 1}^{\top}u } = \sum_{k \in I(u)}\ell_k$.
\item[(2-1)] $\sum_{k=1}^m u_k \dim \ell_k \cap X_i = \dim X_i$ for $i \in [2{\bf 1}^{\top}u]$. 
\item[(2-2)] $X_{2|I_1(u)|} = \sum_{k \in I_1(u)}\ell_k$; in particular, $I_1(u)$ is a matching for $L$.
\item[(2-3)] $I_1(u)^{\cal X}$ is a matching,  
$I_{1/2}(u)^{\cal X}$ consists of odd cycles, 
and $I(u)^{\cal X}$ is their node-disjoint union.
\end{itemize}
\end{itemize}
\end{Thm}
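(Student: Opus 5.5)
The plan is to reduce everything to the classical description of vertices of the fractional (perfect) matching polytope of a graph. The bridge is the edge graph $I(u)^{\cal X}$ on $[n]$ attached to a complete flag.

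The key bookkeeping identity is the following. For a subspace $X$ and a complete flag ${\cal X}=(X_i)$, let $J_{\cal X}(X) := \{ i \mid \dim X\cap X_i > \dim X \cap X_{i-1}\}$, so that $|J_{\cal X}(X)| = \dim X$. For each $k$, the jumps of $i\mapsto \dim \ell_k\cap X\cap X_i$ occur only at indices where both $\ell_k\cap X_i$ and $X\cap X_i$ jump. Hence
\[
\dim \ell_k\cap X \leq |e_k^{\cal X}\cap J_{\cal X}(X)|,
\]
with equality when $X = X_j$, in which case $J_{\cal X}(X_j)=[j]$.

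\textbf{(2) $\Rightarrow$ (1).} By (2-1) applied to consecutive indices, every vertex $i\le 2{\bf 1}^{\top}u$ of $K_n$ has weighted degree $\sum_{k: i\in e^{\cal X}_k} u_k = 1$. By (2-0), every $\ell_k$ with $k\in I(u)$ lies in $X_{2{\bf 1}^{\top}u}$, so no edge of $I(u)^{\cal X}$ meets a vertex $i > 2{\bf 1}^{\top}u$. Thus $u$ restricted to $I(u)$ is a fractional perfect matching of $I(u)^{\cal X}$ on $[2{\bf 1}^{\top}u]$. Feasibility in ${\rm FMP}(L)$ then follows from the identity above:
\[
\sum_k u_k \dim \ell_k\cap X \le \sum_{i\in J_{\cal X}(X)} \sum_{k\ni i} u_k \le |J_{\cal X}(X)| = \dim X .
\]
For extremality, consider the face cut out by the tight constraints at $X_1,\dots,X_{2{\bf 1}^{\top}u}$ together with $u_k=0$ for $k\notin I(u)$. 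Any point of this face is again a fractional perfect matching of the same graph $I(u)^{\cal X}$, by the same jump computation. By (2-3) this graph is a node-disjoint union of single edges and odd cycles, so its degree equations have $u$ as their unique solution. Hence the face is $\{u\}$.

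\textbf{(1) $\Rightarrow$ (2).} The function $X\mapsto \dim X - \sum_k u_k\dim \ell_k\cap X$ is submodular on the subspace lattice, because $X\mapsto\dim\ell\cap X$ is supermodular. Therefore the tight subspaces of a feasible $u$ are closed under $+$ and $\cap$. A standard uncrossing argument then shows that the extreme point $u$ is determined by its zero coordinates together with a chain of tight subspaces.

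The main work, and the step I expect to be the real obstacle, is to refine this chain into a complete flag in which \emph{every} $X_i$ up to $2{\bf 1}^{\top}u$ is tight. Given such a flag, the degree argument above shows that $u$ is a vertex of the fractional perfect matching polytope of $I(u)^{\cal X}$. The classical description of such vertices then gives (2-3) and half-integrality, and tightness of $\sum_{k\in I(u)}\ell_k$ gives (2-0).

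To obtain the refinement, I would first place the tight subspace $\sum_{k\in I_1(u)}\ell_k$ in the chain. It is tight because coordinates equal to $1$ force $\ell_k$ to be "independent" of everything else: any overlap would violate feasibility at $X=\ell_k+\ell_{k'}$. This placement gives (2-2). Between consecutive tight subspaces $Y\subset Y'$, I would pass to the quotient $Y'/Y$ and use the projected lines $(\ell_k\cap Y')+Y$. Extremality, via the rank of the tight system, should force $Y'/Y$ to be spanned by half-weighted lines arranged in a cyclic configuration. Along such a configuration one can build a maximal chain of tight subspaces by adding one vector at a time, choosing each new vector in some $\ell_k$.

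I would try to prove this quotient step by induction on $\dim Y'/Y$. If it fails in general, I would fall back on the perturbation argument of Chang--Llewellyn--Vande Vate applied to a minimal non-refinable interval.
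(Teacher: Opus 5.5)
Your direction (2)$\Rightarrow$(1) is correct and self-contained. The jump inequality $\dim \ell_k\cap X\le |e_k^{\cal X}\cap J_{\cal X}(X)|$ gives feasibility. Since $e_k^{\cal X}$ is never a loop, the degree equations of a node-disjoint union of single edges and odd cycles of length at least $3$ have $u$ as their unique solution.

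Your reduction of (1)$\Rightarrow$(2) is also sound. Suppose some complete flag has $X_1,\dots,X_{2{\bf 1}^{\top}u}$ all tight. Then tightness at level $2{\bf 1}^{\top}u$ forces $\ell_k\subseteq X_{2{\bf 1}^{\top}u}$ for $k\in I(u)$. Your feasibility computation then embeds the fractional perfect matching polytope of $I(u)^{\cal X}$ into ${\rm FMP}(L)$, so $u$ is a vertex of it. The classical vertex description then gives (2-3) and (2-0).

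The genuine gap is that you never show such a flag exists, and that existence is the core content of the Chang--Llewellyn--Vande Vate theorem. Uncrossing does not supply it. It only produces a chain of tight subspaces whose equations, together with $u_k=0$ for $k\notin I(u)$, determine $u$. That rank condition does not force unit steps: for an integral vertex ${\bf 1}_{[s]}$, the chain $0\subset\ell_1\subset\ell_1+\ell_2\subset\cdots$, with jumps of $2$, already determines $u$.

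Moreover, the tight subspaces form a sublattice of the modular subspace lattice, so all maximal tight chains have the same length (Jordan--Dedekind). Hence ``refine the chain to unit steps'' is equivalent to the existence of one complete tight flag, i.e.\ to the core of the theorem, not a lemma toward it. Your sentence that extremality ``should force'' $Y'/Y$ to carry a cyclic configuration of half-weighted lines is exactly what must be proved.

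Induction on $\dim Y'/Y$ is not straightforward either. A half-weighted $\ell_k$ can contribute one dimension to each of two different quotients of the chain (the two ends of $e_k^{\cal X}$ on an odd cycle), so extremality of $u$ does not localize to a single interval.

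Your fallback, invoking CLV's perturbation argument, is what the paper does. It takes from CLV the intermediate tight flag $X_{2|I_1(u)|}\subset\cdots\subset X_{2{\bf 1}^{\top}u}$. It then supplies only the completion below $X_{2|I_1(u)|}$, by a basis adapted to the matching $I_1(u)$, as in Lemma~\ref{lem:flag_matching}.

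Separately, your reason why $Y:=\sum_{k\in I_1(u)}\ell_k$ is tight is wrong. Take $u_k=1$, $u_{k'}=1/2$, $\ell_k\cap\ell_{k'}=\langle w\rangle\neq\{0\}$ and no other lines. Then $X=\ell_k+\ell_{k'}$ gives $2+1=3=\dim X$, so there is no violation there; the violated constraint is at $\langle w\rangle$. Pairwise tests also cannot detect a half-weighted $\ell_j$ that meets $\ell_k+\ell_{k'}$ but neither summand.

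The correct argument is feasibility at $Y$ itself:
$2|I_1(u)|+\frac12\sum_{j\in I_{1/2}(u)}\dim \ell_j\cap Y\le\dim Y\le 2|I_1(u)|$.
This yields three things:
\begin{itemize}
\item $Y$ is tight;
\item $I_1(u)$ is a matching;
\item $\ell_j\cap Y=\{0\}$ for $j\in I_{1/2}(u)$.
\end{itemize}
The last fact is what makes every level of the adapted flag below $Y$ tight.
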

The original statement in \cite{CLV01b} is about intermediate flag $X_{2|I_1(u)|}\subset \cdots \subset X_{2{\bf 1}^{\top} u}$.
We complement it to a complete flag (by using the previous lemma).
Note that (2-1) is redundant.

For showing $(2/3)\varDelta_{\rm max}(A_L[c]) \leq \delta_{\rm max}(A_L[c])$, 
we construct a $2/3$-approximate matching
from an extreme point $u^* \in {\rm FMP}(L)$ 
attaining the maximum $c^{\top}u$ over $u \in {\rm FMP}(L)$.
It necessarily holds $c_k \geq 0$ if $k \in I(u^*)$.
Consider a complete flag ${\cal X}$ in Theorem~\ref{thm:CLV_characterization}. 
For simplicity, we do not distinguish between $I(u^*)$ and $I(u^*)^{\cal X}$, and we denote $c^{\top}{\bf 1}_{S}$ by~$c(S)$.

Let $C_1,C_2,\ldots,C_s$ be the set of odd cycles of $I_{1/2}(u^*)^{\cal X}$.
By construction, $I(u^*)^{\cal X}$ has no loop, and $|C_i| \geq 3$. 
Then, each $C_i$ is written as the union of $|C_i|$ matchings $M^1_i,M^2_i,\ldots,M_i^{|C_i|}$
of size $(|C_i|-1)/2$ 
such that each edge in $C_i$ is used by these matchings exactly $(|C_i|-1)/2$ times. Namely, it holds
${\bf 1}_{C_i} = (2/(|C_i|-1)) \sum_{j=1}^{|C_i|} {\bf 1}_{M^j_i}. $
For each $i \in [s]$, 
choose $M_i^*$ from  $M_i^1,M_i^2,\ldots,M_i^{|C_i|}$ such that 
 $c(M_i^*) = \max_{j=1,2,\ldots,|C_i|} c(M_i^j)$. 
Then $M^* := I_1(u^*) \cup M^*_1 \cup \cdots \cup M^*_s$
forms a matching in $K_n$, 
and is a matching for $L$ by Lemma~\ref{lem:flag_matching}.
Therefore we have
\begin{eqnarray*}
&& c^{\top}u^* = c( I_1(u^*) ) 
+ \frac{1}{2} \sum_{i=1}^s 
c(C_i)=  c( I_1(u^*) )  + \frac{1}{2} \sum_{i=1}^s \frac{2}{|C_i|-1} \sum_{j=1}^{|C_i|} c(M_i^{j}) \\
&&\leq  c( I_1(u^*) ) + \sum_{i=1}^s \frac{|C_i|}{|C_i|-1} c(M_i^{*})\leq  c( I_1(u^*) ) + \frac{3}{2} \sum_{i=1}^s c(M_i^{*}) \leq \frac{3}{2} c(M^*).
\end{eqnarray*}
Thus $\varDelta_{\rm max}(A_L[c]) 
= 2c^{\top}u^* \leq 3 c(M^*) \leq (3/2) \delta_{\rm max}(A_L[c])$, which completes the proof.
\begin{Rem}\label{rem:2/3}
This proof yields a $2/3$-approximation algorithm for weighted linear matroid matching, by combining a strongly polynomial-time algorithm 
for $\varDelta_{\rm max}(A[c])$ in~\cite{HIOS2025}.
We sketch it, though it may no longer be of much interest in the light of~\cite{IwataKobayashi2017}.
By perturbing weight $c\in \ZZ^m$ (as in \cite[(4.17)]{HiraiIkeda}), 
we can assume that an optimal fractional matching $u^*$ is unique, and that the dual LP of $\max \{c^{\top}u \mid  u\in {\rm FMP}(L)\}$ is nondegenerate. 
By the algorithm in~\cite{HIOS2025}, 
we obtain $p = q, S=T^{\top}$ attaining the minimum of~(\ref{eqn:A[c]_dual}). 
According to \cite[Section 4.1.2]{HIOS2025}, 
$(p,S)$ is transformed to
an optimal solution $\lambda$ of the dual LP whose nonzero support is a flag. By the nondegeneracy, we can make $\lambda$ an optimal basis solution with flag support ${\cal X}$. From the basis, we can determine the primal optimal solution $u^*$.
According to the proof of Theorem~\ref{thm:CLV_characterization} in~\cite[p.89]{CLV01b}, 
this flag ${\cal X}$ is transformed to the one in the theorem. 
Then, apply the above procedure for $u^*,{\cal X}$ and obtain a $2/3$-approximate matching $M^*$.

For general matroids, 
a $2/3$-approximation algorithm is known for restricted classes of the problems; see~\cite{Fujito,LeeSviridenkoVondrak2013}.
A larger part of the proof
can be formulated for general matroids by 
adding loop $e^{\cal X}_k = ii$ for each $\ell_k$ with $r(X_i \wedge \ell_k) - r(X_{i-1} \wedge \ell_k) = 2$ (that cannot occur in our setting). 
Then, the above odd cycle $C_i$ may be a loop, and we get stuck at the last step of the proof.
\end{Rem}
\section*{Acknowledgments}
The author thanks Yuya Goto, Yuni Iwamasa, Taihei Oki, 
and Tasuku Soma for helpful comments.
The author was supported 
by JSPS KAKENHI Grant Numbers JP24K21315, JP26H01996.
The author used Microsoft Copilot to assist with literature survey, organization of ideas, and language editing.
The author reviewed and revised all outputs and takes full responsibility for the content of the manuscript.

\bibliographystyle{plain}
\bibliography{polytope_Edmonds}

\appendix
\section{Proof of Lemma~\ref{lem:P(A_L)=2M(L)}}
For each $k \in [m]$, 
choose a basis $u_k,v_k$ of $\ell_k$. Consider $A_L$ in (\ref{eqn:A_L}). Let $B := A_L[c]$.
Consider $n \times 2m$ matrix $M := (u_1\ v_1\ u_2\ v_2\ \cdots\ u_m\ v_m)$. 
For $K \subseteq [n]$ and $s \in [m]$, 
let $N := M[K,[2s]]$, and define $2s \times 2s$ matrix $\Omega$ by 
\begin{equation*}
 \Omega_{ij} := \left\{
 \begin{array}{cl}
 x_{k}^{-1}t^{-c_k} & {\rm if}\ (i,j)=(2k,2k-1),k\in[s],\\
 -x_{k}^{-1}t^{-c_k} & {\rm if}\ (i,j)=(2k-1,2k),k\in [s],\\
 0 & {\rm otherwise},
 \end{array}
\right. \quad (i,j \in [2s]).
\end{equation*}
Let $B' := \sum_{k=1}^s (u_kv_k^{\top}- v_ku_k^{\top})x_kt^{c_k}$ be the matrix obtained 
from $B$ by letting $x_k := 0$ for $k > s$.
Our proof is based on the following relation:
\begin{equation}\label{eqn:relation}
\det B'[K,K]  t^{-2c{\bf 1}_{[s]}} \prod_{i=1}^s x_i^{-2} 
 = \det \left(
\begin{array}{cc}
B'[K,K] & N \\
O & \Omega
\end{array}
\right) = \det
\left(
\begin{array}{cc}
O & N \\
-N^{\top} & \Omega
\end{array}
\right),
\end{equation}
where the second equality is seen 
from column elementary operation.

We show $\delta_{\rm max}(B) = 2 \max \{c^{\top}u \mid u \in 
{\rm MP}(L)\}$ (by Lemma~\ref{lem:P_subseteq_Q}).
\begin{Clm}
    There is $K \subseteq [n]$ 
    such that $\deg \det B[K,K] = \delta_{\rm max}(B)$.
\end{Clm}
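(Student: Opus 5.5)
The plan is to mimic the Claim in the proof of Theorem~\ref{thm:2nd_blowup}, now for ordinary determinants over the commutative field $\KK(x_1,\ldots,x_m)(t)$, using skew-symmetry of $B = A_L[c]$. For square $I,J\subseteq[n]$ with $|I|=|J|$, let $\deg$ denote the degree in $t$. We need an index pair with $I=J$ attaining $\delta_{\rm max}(B)$.

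\emph{Step 1: choose an extremal maximizer.} Among all pairs $(I,J)$ with $\deg\det B[I,J]=\delta_{\rm max}(B)$, pick one with $|I\cap J|$ maximum. Suppose $I\neq J$; we derive a contradiction.

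\emph{Step 2: transpose.} Since $B^{\top}=-B$, we have $B[J,I] = -B[I,J]^{\top}$, so $\det B[J,I]=\pm\det B[I,J]$ and $(J,I)$ is also a maximizer.

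\emph{Step 3: exchange via valuated bimatroid.} The function $(I,J)\mapsto \deg\det B[I,J]$ is a valuated bimatroid (Murota, Section 5.2 of \cite{MurotaMatrix}). It satisfies the exchange axiom: for $(I,J)$, $(I',J')$ and $i'\in I'\setminus I$, either there is $j'\in J'\setminus J$ with
\[
\deg\det B[I,J]+\deg\det B[I',J'] \le \deg\det B[I\cup\{i'\},J\cup\{j'\}]+\deg\det B[I'\setminus\{i'\},J'\setminus\{j'\}],
\]
or there is $i\in I\setminus I'$ with the analogous inequality for $(I\cup\{i'\}\setminus\{i\},J)$ and $(I'\setminus\{i'\}\cup\{i\},J')$. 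Apply this with $I'=J$, $J'=I$ and some $i'\in J\setminus I$. This is possible since $|I|=|J|$ and $I\ne J$ imply $J\setminus I\neq\emptyset$.

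\emph{Step 4: conclude.} The left-hand side equals $2\delta_{\rm max}(B)$, and each term on the right-hand side is at most $\delta_{\rm max}(B)$, so both right-hand terms are maximizers.
\begin{itemize}
\item[] In the first case, $(I\cup\{i'\}, J\cup\{j'\})$ with $j'\in I\setminus J$ has intersection $I\cap J\cup\{i',j'\}$, which is strictly larger.
\item[] In the second case, $(I\cup\{i'\}\setminus\{i\}, J)$ with $i\in I\setminus J$ gains $i'\in J$ and loses $i\notin J$, so the intersection again grows by one.
\end{itemize}
Either way this contradicts the maximality of $|I\cap J|$, so $I=J=:K$.

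The main obstacle is justifying the valuated exchange inequality. If citing Murota's valuated bimatroid theory is not acceptable, I would fall back to a leading-coefficient argument. Multiply rows and columns by suitable powers of $t$, or pass to the Smith--McMillan form with $GL_n(\KK(t)^-)$ as in Lemma~\ref{lem:SM}, so that maximum-degree subdeterminants correspond to nonsingular submatrices of a leading-term matrix. Then apply Kung's unweighted bimatroid exchange, as in the earlier Claim. In that route one must check that the transformation preserves skew-symmetry, e.g.\ by using $T=S^{\top}$.
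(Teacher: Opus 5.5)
Your proposal is correct and follows the paper's proof essentially verbatim. Like the paper, you take a maximizer $(I,J)$ with $|I\cap J|$ maximum, use skew-symmetry to see that $(J,I)$ is also a maximizer, and apply the valuated-bimatroid exchange for $(I,J)\mapsto\deg\det B[I,J]$ with $(I',J')=(J,I)$ to obtain a maximizer with strictly larger $|I\cap J|$.

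The fallback route is unnecessary. As sketched it would also need rethinking: a congruence $SBS^{\top}$ of a skew-symmetric matrix stays skew-symmetric, so it can never be brought to the diagonal Smith--McMillan form unless $B=O$.
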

\begin{proof}
It is known that $I,J \mapsto v(I,J) := \deg \det B[I,J]$ is a {\em valuated bimatroid}~\cite[Section 5.2.5, Example 5.2.15]{MurotaMatrix}. Then the exchange property says that 
for $I,J, I',J' \subseteq [n]$ and $i' \in I' \setminus I$, there is $j' \in J'\setminus J$ 
such that 
\begin{equation*}
v(I,J) + v(I',J') \leq v(I\cup\{i'\},J\cup\{j'\}) + v(I' \setminus \{i'\},J'\setminus \{j'\}) 
\end{equation*}
or there is $i \in I \setminus I'$ such that
\begin{equation*}
v(I,J) + v(I',J') \leq v(I\cup\{i'\} \setminus \{i\},J) + v(I' \cup\{i\} \setminus \{i'\},J'). 
\end{equation*}
Choose $I,J$ with $v(I,J) = \delta_{\rm max}(B)$ 
such that $|I \cap J|$ is maximum. 
Suppose, to the contrary, that $I \neq J$.
By skew-symmetry, it holds $(B[I,J])^{\top} = - B[J,I]$, and $v(I,J) = v(J,I)$.
Apply the exchange property for $I,J, I' =J, J'=I$ and $j \in J \setminus I$.
There is $i \in I \setminus J$ such that $v(I \cup \{j\},J \cup \{i\}) = \delta_{\rm max}(B)$ 
or $v(I \cup \{j\} \setminus \{i\},J) = \delta_{\rm max}(B)$.
This is a contradiction to the choice of $I,J$. Thus we have $I = J =:K$.
\end{proof}
Choose such an index set $K \subseteq [n]$.
From the fact that each summand of $A_L$ is rank~2, we see that $\supp \det B[K,K] \subseteq \{0,1,2\}^m$.
Further, since $B$ is skew-symmetric,
$\det B[K,K]$ is the square of a polynomial (that is the Paffian).
Hence the maximum degree is attained 
by the term having even exponent vector $2{\bf 1}_{S}$ for $S \subseteq [m]$.
We may assume that $S = [s]$. Define $B'$, $N$, and $\Omega$ as above.
Then $\det B'[K,K] = a t^{2c^{\top}{\bf 1_{[s]}}} \prod_{i=1}^s x_i^2$ for $a \neq 0$.
In particular, (\ref{eqn:relation}) takes a nonzero value $a$ in $\KK$.
Here $|K| > 2s$ is impossible, since otherwise the right most determinant in (\ref{eqn:relation}) must be zero.
Also $|K| < 2s$ is impossible, since otherwise it must include some of $x_i$. 
Thus $a = (\det N)^2 \neq 0$.
This means that $u_k,v_k$ $(k \in [s])$ are linearly independent, and $[s]$ is a matching 
having weight $c^{\top}{\bf 1}_{[s]} = (1/2) \delta_{\rm max}(B)$.
Thus $\delta_{\rm max}(B) = 2c^{\top}{\bf 1}_{[s]} \leq 2 \max \{c^{\top}u \mid u \in {\rm MP}(L)\}$.

Consider the converse direction.
Suppose that the maximum-weight matching is given by $[s]$.
Then there is a row index set $K$ with $|K| = 2s$ 
such that $N := M[K, [2s]]$ is nonsingular. 
Define $B'$ and $\Omega$ as above.
Consider the relation~(\ref{eqn:relation}).
The right most determinant must be a nonzero value $a$ in $\KK$.
This implies $\deg \det B' = 2c{\bf 1}_{[s]}$. 
Necessarily we have  
$\delta_{\rm max}(B) \geq 2c{\bf 1}_{[s]} = 2 \max \{c^{\top}u \mid u \in {\rm MP}(L)\}$, as required. 
\end{document}